\documentclass[10pt,oneside,reqno]{amsart}

\usepackage[utf8]{inputenc}
\usepackage[T1]{fontenc}
\usepackage[ngerman,english]{babel}
\usepackage{lmodern}

\usepackage{hyperref}

\usepackage{amsmath}
\usepackage{amsthm}
\usepackage{amsfonts}
\usepackage{amssymb}
\usepackage{amscd}
\usepackage{amsbsy}
\usepackage{bm}
\usepackage{csquotes}

\usepackage{pdfpages}
\usepackage{fancyhdr}
\usepackage{geometry}
\usepackage{setspace}
\usepackage{xcolor}
\usepackage{pifont}

\usepackage{graphicx}
\usepackage{tabularx}
\usepackage{epsfig}
\usepackage[all]{xy}
\usepackage{tikz}
\usetikzlibrary{matrix}

\usepackage{fixmath}

\usepackage{appendix}
\usepackage[sort, numbers]{natbib}
\usepackage{bibentry}

\usepackage{enumerate}

% #########################################################
% ### E N V I R O N M E N T S #############################
% #########################################################
\newtheorem{theorem}{Theorem}[section]
\newtheorem{lemma}[theorem]{Lemma}

\newtheorem{corollary}[theorem]{Corollary}

\theoremstyle{definition}
\newtheorem{definition}[theorem]{Definition}

\newtheorem*{remark}{Remark}

%%%%%%%%%%%%%%%%%%%%%%%%%%%%%%%%%%%%%%%%%%%%%%%%%%%%%%%%%%%

\numberwithin{equation}{section}

% #########################################################
% ### C O M M A N D S #####################################
% #########################################################
\newcommand{\N}{{\mathbb N}}
\newcommand{\Z}{{\mathbb Z}}

\newcommand{\R}{{\mathbb R}}
\newcommand{\C}{{\mathbb C}}
\newcommand{\D}{{\mathbb D}}

\renewcommand{\Re}{\operatorname{Re}}
\renewcommand{\Im}{\operatorname{Im}}

\newcommand{\inv}{^{-1}}

\newcommand{\xlim}{\underset{x \rightarrow \infty}{\lim}}
\newcommand{\smat}{\begin{pmatrix}}
\newcommand{\fmat}{\end{pmatrix}}

\newcommand{\vertiii}[1]{{\left\vert\kern-0.25ex\left\vert\kern-0.25ex\left\vert #1 
    \right\vert\kern-0.25ex\right\vert\kern-0.25ex\right\vert}}

  \DeclareMathOperator\Var{Var}

\title{Dirac operators with operator data of Wigner-von Neumann type}

\thanks{E.G.\ was supported in part by NSF grant DMS--1745670.}

\author{Ethan~Gwaltney}
\begin{document}

\address{Department of Mathematics, Rice University MS-136, Box 1892,
Houston, TX 77251-1892, USA.}
\email{ethan.gwaltney@rice.edu}

\begin{abstract}
We consider half-line Dirac operators with operator data of Wigner-von Neumann type. 
If the data is a finite linear combination of Wigner-von Neumann functions, we show absence of singular continuous spectrum and provide an explicit set containing all embedded pure points depending only on the $L^p$ decay and frequencies of the operator data. 
For infinite sums of Wigner-von Neumann-like terms, we bound the Hausdorff dimension of the singular part of the spectrum. 
\end{abstract} 
\maketitle

\begin{section}{Introduction}
In 1929, von Neumann and Wigner \cite{vNW29} constructed a potential of a one-dimensional continuum Schr\"odinger operator behaving at infinity as 
\[
	V(x) = -8\frac{\sin(2x)}{x} + O(x^{-2}),
\]
for which $E=1$ is an eigenvalue embedded in the absolutely continuous part of the spectrum.
Many variants of this model have since been used to demonstrate and study `exotic' singular spectrum (e.g., \cite{A54,BD79,FL21,HL75,K2012,LO2015,NS12,S12,W83}). 
One construction due to Simon uses the Wigner-von Neumann model as the basic building block in a potential for which the associated Schr\"odinger operator exhibits dense embedded point spectrum \cite{S97}. Two results of Simonov even precisely describes the asymptotics of the spectral density near a critical point for certain Wigner-von Neumann-like perturbations of a periodic potential \cite{Sim2016}. 

Functions of generalized bounded variation, of which von Neumann and Wigner's potential is a special case, combine slower decay with additional Wigner-von Neumann-like terms with differing frequencies, producing a mixture of bounded variation, decay at infinity, and almost periodicity \cite{W09}. 
This combination is an interesting one for at least three reasons: potentials of bounded variation with decay at infinity preserve the absolutely continuous spectrum \cite{W67}; $L^1$ potentials preserve the purely absolutely continuous spectrum (see, for example, \cite{S96}), which underscores the importance of the rate of decay for producing an embedded eigenvalue; and there is a generic set of almost periodic potentials producing purely singular spectrum \cite{A2009}. 

Historically, most exploration of exotic spectra arising from models based on the Wigner-von Neumann potential restricted to the $L^2$ case, with one result \cite{JS10} from the discrete case allowing $\ell^3$ decay.  
Luki\'c \cite{L2014,L2013DiscreteGBV,L2011,L2013SlowWvN} used functions of generalized bounded variation to progress to the $L^p$ setting for any integer $2 \leq p < \infty$, showing the absence of singular continuous spectrum and explicitly providing $p$-dependent finite sets containing all possible instances of embedded pure points.
In \cite{L2014}, Luki\'c extended this work to include potentials with infinitely many summands of generalized bounded variation, in which case the set of possible pure points is in general infinite.

Schr\"odinger operators and Dirac operators have often been studied in tandem. 
For example, Behncke \cite{B91} established criteria for subordinate solutions in both the Schr\"odinger and Dirac settings, and Naboko \cite{N86} demonstrated dense point spectrum in the absolutely continuous spectrum of Dirac operators and deduced the same for Schr\"odinger operators as a special case. 
While there are some considerations of Dirac operators with Wigner-von Neumann type operator data in the literature (e.g., \cite{BR95,LS2017}), such considerations seem to be rare. 
Here we continue the work of Luki\'c on spectral type charcterization of models with Wigner-von Neumann type data in the context of the half-line Dirac operator. 
The analysis begins in much the same way as in \cite{L2013SlowWvN}, but an important adaptation is required that alters the analysis throughout and the results we obtain. 

The Dirac operator $L_\varphi$ is defined by the expression
\begin{equation}
	L_\varphi = \begin{pmatrix} 0 & -1 \\ 1 & 0 \end{pmatrix} \partial_x + \begin{pmatrix} R(x) & I(x) \\ I(x) & -R(x) \end{pmatrix}, \label{dirac.gauge}
\end{equation}
where the real-valued functions $R(x)$ and $I(x)$ should be viewed as the real and imaginary parts of a complex-valued function $\varphi(x).$ 
The function $\varphi$ arises naturally in the Zakharov-Shabat operator
\begin{equation}
	\Lambda_\varphi = \begin{pmatrix} i & 0 \\ 0 & -i \end{pmatrix} \frac{d}{dx} + \begin{pmatrix} 0 & \varphi(x) \\ \overline{\varphi(x)} & 0 \end{pmatrix}, \label{ZSO}
\end{equation}
which differential expression is unitarily equivalent to \eqref{dirac.gauge}. 
The latter form is often more convenient for calculations, and below we opt to work with $\Lambda_\varphi$. 
More details regarding this gauge distinction are available in \cite{CG2002,EGL2020,GK2014}. 
We will study the eigenequation for $\Lambda_\varphi$, 
\begin{equation}
	\Lambda_\varphi U(x,\eta) = E U(x,\eta), \label{eigenequation}
\end{equation}
where $\eta = 2E.$ 
Our operator data $\varphi$ will be of almost the same form as the potentials in \cite{L2014}, which form generalizes the famous Wigner-von Neumann potential of \cite{vNW29}. 
The only adjustment to the definition from \cite{L2014} is that, for reasons we will explain shortly, we may without loss of generality restrict to odd integers $p$.
We recall that the variation of a function $\gamma$ on an interval $I$ is defined as
\[
	\Var(\gamma, I) = \sup_{k \in \N} \sup_{\substack{x_0, \ldots, x_k \in I \\ x_0 < \cdots < x_k}} \sum_{j=1}^{k} |\gamma(x_j) - \gamma(x_{j-1})|.
\]
\begin{definition}
\label{WVNtype.defn}
We say $\varphi$ is of \textit{Wigner-von Neumann type} if it takes the form
\begin{align}
\label{WvNPotential}
	\varphi(x) = \sum_{j =1}^\infty c_j e^{-i \phi_j x} \gamma_j(x),
\end{align}
where $c_j \in \C$, $\phi_j \in \R$, and all of the following conditions hold:
\begin{enumerate}
	\item (uniformly bounded variation) the functions $\gamma_j:(0,\infty) \to \C$ obey
	\begin{equation}
		\sup_{j} \text{Var}(\gamma_j, (0,\infty)) < \infty. \label{uniformly.b.v.condition}
	\end{equation}
	\item (uniform $L^p$ condition) for some odd $p \in \Z$, $p \geq 3$, 
	\begin{equation}
		\sup_{j } \int_0^\infty |\gamma_j(t)|^p dt < \infty. \label{uniformly.L.p.condition}
	\end{equation}
	\item ($\alpha$-type decay of coefficients) for some $\alpha \in (0, \frac{1}{p-2})$, 
	\begin{equation}
		\sum_{j}|c_j|^\alpha < \infty. \label{alpha.type.decay.condition}
	\end{equation}
\end{enumerate}
If only finitely many $c_j$ are nonzero, we say $\varphi$ is of \textit{finite Wigner-von Neumann type}. 
In this latter case, of course, the condition \eqref{alpha.type.decay.condition} holds for arbitrary positive $\alpha$. 
We call the $\phi_j$ \textit{frequencies} and denote the set of all frequencies by $\Phi = \{\phi_j: j \in \N\}$. 
\end{definition}
The differential expression \eqref{ZSO} with operator data $\varphi$ of Wigner-von Neumann type has $0$ as a regular endpoint and, since $\varphi$ decays at infinity, is in the limit point case at $+\infty$. 
Thus, $\Lambda_\varphi^\omega$ defines an unbounded self-adjoint operator with domain
\[
	D(\Lambda_\varphi^\omega) = \{f \in H^1( (0,\infty), \C^2): \begin{pmatrix}\omega & \overline{\omega} \end{pmatrix} f(0) = 0\},
\]
where we have parametrized $\Lambda_\varphi^\omega$ by a unimodular complex constant $\omega$.
The choice of $\omega \in \partial \D$ is not important to the analysis--our results stand as stated independent of the choice of boundary condition. 

The first of our main results shows absence of singular continuous spectrum and provides an explicit set containing all possible embedded pure points in the case where $\varphi$ is of finite Wigner-von Neumann type:
\begin{theorem}
\label{finite.main.thm}
	Let $\Lambda_\varphi$ be a Dirac operator with operator data $\varphi$ of finite Wigner-von Neumann type satisfying the uniform $L^p$ condition \eqref{uniformly.L.p.condition} for some $p = 2n+1$, $n \geq 1$. 
	Then for 
	\[
		S_p = \Big\{\frac{\eta}{2}\big\vert \eta = \sum_{j=1}^{m}\phi_{k_j} - \sum_{j=1}^{m-1}\phi_{l_j}, \phi_{k_j}, \phi_{l_j} \in \Phi, 1 \leq m \leq n\Big\},
	\]	
	which depends only on $p$ and the set $\Phi$ of frequencies of $\varphi$, the spectral measure $\mu$ of $L_\varphi$ is mutually absolutely continuous with Lebesgue measure on $\R \setminus S_p$. 
	Consequently, 
	\begin{enumerate}
		\item $\sigma_{ac}(L_\varphi) = \R$
		\item $\sigma_{sc}(L_\varphi) = \emptyset$
		\item $\sigma_{pp}(L_\varphi) \subset S_p$ is a finite set.
	\end{enumerate} 
\end{theorem}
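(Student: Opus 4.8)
The plan is to deduce (i)--(iii) from a uniform boundedness statement for the solutions of the eigenequation, and to prove that boundedness by an iterated integration-by-parts scheme in the spirit of \cite{L2013SlowWvN}.

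\textbf{Reduction.} Working with $\Lambda_\varphi$, I would introduce for \eqref{eigenequation} Pr\"ufer-type variables $\rho(x,\eta)>0$, $\theta(x,\eta)$ adapted to the Zakharov--Shabat structure, normalized so that
\[
	\frac{d}{dx}\log\rho(x,\eta)=-\Im\!\big(\varphi(x)e^{i\theta(x,\eta)}\big),\qquad \frac{d\theta}{dx}(x,\eta)=\eta-2\Re\!\big(\varphi(x)e^{i\theta(x,\eta)}\big),
\]
the precise form being immaterial. I claim it suffices to show that for every compact $K\subset\R\setminus S_p$ there is $C(K)$ with $|\log\rho(x,\eta)-\log\rho(0,\eta)|\le C(K)$ for all $x>0$, all $\eta\in K$, and all boundary angles. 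This bounds the transfer matrices uniformly, so every solution of \eqref{eigenequation} is bounded and bounded away from $0$ for $\eta\in K$; subordinacy theory for Dirac operators (\cite{B91}), together with the standard passage from uniformly bounded solutions to mutual absolute continuity with Lebesgue measure, then shows that $\mu$ is mutually absolutely continuous with Lebesgue measure on $\R\setminus S_p$. From this, $\mu_{ac}$ is nonzero on every open subset of the dense set $\R\setminus S_p$, so $\sigma_{ac}(L_\varphi)=\R$; $\mu_{sc}$ is a non-atomic measure supported in the finite set $S_p$, hence $\mu_{sc}=0$ and $\sigma_{sc}(L_\varphi)=\emptyset$; and $\mu_{pp}$ is supported in $S_p$, so $\sigma_{pp}(L_\varphi)\subset S_p$ is finite.

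\textbf{The iteration.} To bound $\log\rho$ I integrate the first identity and integrate by parts repeatedly. Since $\varphi$ is of \emph{finite} Wigner--von Neumann type, each $\gamma_j\in L^p\cap\mathrm{BV}$ has a limit at $\infty$, which must be $0$, so $\varphi(x)\to0$; hence on $[x_0,\infty)$, with $x_0=x_0(K)$ large, the correction $-2\Re(\varphi e^{i\theta})$ to $\eta$ in the angle equation is negligible, while $[0,x_0]$ contributes a bounded amount. Classify the terms produced in the iteration by a \emph{degree} $k$ (the number of factors $\gamma_{j_1}(x)\cdots\gamma_{j_k}(x)$ present) and by the harmonic $e^{i\ell\theta}$ carried. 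A degree-$k$ term is a product of $k$ functions in $L^p\cap L^\infty$, hence lies in $L^1(0,\infty)$ once $k\ge p$; so the recursion stops after at most $p-1$ rounds, the surviving $L^1$ remainders being controlled by H\"older and \eqref{uniformly.L.p.condition}. In each earlier round, integrating by parts a term whose oscillation frequency stays bounded away from $0$ on $K$ yields a boundary term bounded uniformly in $x$, $\eta\in K$ and the boundary angle; a term in which one $\gamma_j$ is differentiated, integrable with bound $\lesssim\sup_j\Var(\gamma_j,(0,\infty))<\infty$ by \eqref{uniformly.b.v.condition}; and terms of degree $k+1$, whose oscillation frequency is again an integer combination of $\eta$ with finitely many $\phi_j$. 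Such a term is obstructed only when that frequency can vanish for some $\eta\in K$; tracking $\ell$ shows the unremovable obstructions arise first at odd degrees $k=2m-1$ with $|\ell|=1$, where the frequency equals $\eta-\big(\phi_{k_1}+\cdots+\phi_{k_m}-\phi_{l_1}-\cdots-\phi_{l_{m-1}}\big)$. Because $p$ is odd these occur only for $k\le p-2$, that is $1\le m\le n$, so the corresponding $\eta$ are exactly the numbers defining $S_p$; for $\eta\in K$ none of the finitely many (as $\Phi$ is finite and the recursion is finite) frequencies vanish, the denominators are uniformly bounded below, and the scheme closes.

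\textbf{Main obstacle.} The delicate point---and where the Dirac setting genuinely departs from the Schr\"odinger one---is that the naive frequency count also shows \emph{spurious} obstructions, for instance at $2\eta=\phi_j+\phi_k$ coming from the even harmonic $e^{2i\theta}$, which are \emph{not} in $S_p$. The resolution is that the coefficient of every such term, assembled from the earlier integrations by parts, always carries a compensating numerator factor---here of the form $\phi_j+\phi_k-2\eta$---that exactly cancels the dangerous small denominator created at the next step, leaving genuine poles only at the frequencies already listed in $S_p$. Establishing this cancellation in general, by an induction on the combinatorial (word/tree) structure of the iterated terms carried out uniformly in $\eta\in K$, is the principal technical content of the argument.
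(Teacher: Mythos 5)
Your plan tracks the paper's proof very closely: pass to Pr\"ufer variables, iterate the exchange/integration-by-parts lemma, classify the resulting terms by the number of $\gamma$-factors and by the harmonic carried, and bound everything using \eqref{uniformly.b.v.condition} and \eqref{uniformly.L.p.condition}, closing via Behncke subordinacy. Your ``main obstacle'' paragraph correctly identifies the removability of the higher-harmonic poles: in the paper this is the content of Lemma~\ref{f.g.reduction.lemma} (reducing $g_{I,K}$ to $\odot$-products of $g_{i,1}$), Lemma~\ref{nonremovable.singularities.g}, and Corollary~\ref{nonremovable.singularities.corollary}, and your toy computation $\phi_j+\phi_k-2\eta$ cancelling the $K=2$ denominator is literally what $f_{2,2}\cdot g_{2,2}$ does.

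However, there is a genuine gap. You assert that ``the unremovable obstructions arise first at odd degrees $k=2m-1$ with $|\ell|=1$'' and conclude that for $\eta$ in a compact set disjoint from $S_p$, ``none of the finitely many frequencies vanish.'' This misses the zero-harmonic terms. At even degree $k=2m$ with $\ell=0$ (i.e.\ $K=0$ in the paper's notation), the oscillation frequency is $\sum_{j=1}^{m}\phi_{k_j}-\sum_{j=1}^{m}\phi_{l_j}$, which is independent of $\eta$, and it vanishes identically whenever the multisets $\{\phi_{k_j}\}$ and $\{\phi_{l_j}\}$ coincide (e.g.\ $k_j=l_j$, producing the non-oscillatory, non-integrable term $\prod_j |\gamma_{k_j}(x)|^2$). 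These terms cannot be removed by another integration by parts, nor by restricting $\eta$: they are present for \emph{every} $\eta\in\R$. The paper handles them with Lemma~\ref{zero.K.cancellation.lemma}, which shows $f_{2m,0}$ is purely imaginary and satisfies the pairing symmetry $f_{2m,0}(\eta;[\phi_{k_j}];[\phi_{l_j}])=f_{2m,0}(\eta;[\phi_{l_j}];[\phi_{k_j}])$, so that after summing over the paired index choices and taking $\Re$ in \eqref{log.r} these contributions drop out of $\log r$. Without this imaginary/symmetry cancellation (or a substitute for it), your argument does not bound $\log r$ and the proof is incomplete.
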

In addition to technical adjustments to the methods in \cite{L2011,L2013SlowWvN}, we encounter an exceptional set similar to that in the setting of orthogonal polynomials on the unit circle, rather than the exceptional sets in the settings of Schr\"odinger operators or orthogonal polynomials on the real line. 
Namely, not all sums and differences of frequencies from the operator data give rise to possible pure points, but rather only those of the form
\begin{equation}
	\sum_{j=1}^{m} \phi_{k_j} - \sum_{j=1}^{m-1} \phi_{l_j}. \label{form.of.singularities}
\end{equation}
The intuitive reason for this is reminiscent of the setting of orthogonal polynomials on the unit circle. 
On the unit circle, rotating the measure by an angle $\psi$ shifts each of the frequencies $\phi_j$ by $\psi$. 
Thus, from the set of \textit{a priori} possible critical points, only those of the form \eqref{form.of.singularities} are preserved. 
Similarly, to shift the spectral paramater $E = \eta/2$ by $\psi$ in the Dirac setting, we multiply our operator data $\varphi$ by $e^{i2\psi x}$, which shifts each frequency $\phi_j$ by $2\psi$, and again we see that only critical points $\eta$ of the form \eqref{form.of.singularities} are preserved. 
The fact that new elements of the form \eqref{form.of.singularities} become available only when $p$ increases to an odd integer accounts for the odd $p$ in the $L^p$ condition of the theorem. 
Our insistence that $p$ be odd in Definition \ref{WVNtype.defn} is only a choice made for convenience and not a substantial change to the definition in \cite{L2014}. 

Since $S_p$ grows as $n$ grows in $p=2n+1$, it is natural to ask both whether there exist $\varphi$ for which $S_p$ indeed contains an embedded eigenvalue and whether the growth in the sets $S_p$ is necessary or an artifact of our method. 
To answer these questions, we construct operator data that, for the right choice of boundary condition, yields an eigenvalue in $S_5 \setminus S_{3}$. 
A similar argument is available to produce $\varphi$ with eigenvalue $E \in S_p \setminus S_{p-2}$ for larger $p$. 
Our construction will use operator data of the form
\begin{equation}
	\varphi(x) = \sum_{j=1}^{M} c_j x^{-\delta} e^{-i(\phi_j x + \xi_j(x))}, \label{special.operator.data.form}
\end{equation}
where $\delta \in (p\inv, (p-2)\inv]$, $\xi_j$ are real-valued, and $M < \infty$. 
We can realize $\gamma_j(x)$ in \eqref{WvNPotential} as $c_jx^{-\delta} e^{-i\xi_j(x)}$. 
Thus defined, $\varphi$ satisfies the conditions of Definition \ref{WVNtype.defn}. 

\begin{theorem}
\label{example.thm}
Let $\varphi$ be given by \eqref{special.operator.data.form} with $p =5$, $M \geq 3,$ $\delta \in (p\inv, (p-2)\inv]$, and the frequencies $\phi_j$ rationally independent. 
Then there exists $E=\eta/2 \in S_p \setminus S_{p-2}$. 
Moreover, for any $c_j$ and $\phi_j$ satisfying both 
\begin{equation}
	\sum_{j=1}^{M} \frac{|c_j|^2}{\phi_j - \eta} = 0 \label{second.order.condition}
\end{equation}
and 
\begin{equation}
	\sum_{j_1,j_2=1}^{M} |c_{j_1}c_{j_2}|^2 \frac{\phi_{j_1} + \phi_{j_2} - 2\eta}{(\phi_{j_1}- \eta)^2(\phi_{j_2}-\eta)^2} = 0, \label{fourth.order.condition} 
\end{equation}
there exist functions $\xi_j \in C^1$ such that $\varphi$ satisfies \eqref{uniformly.b.v.condition} and \eqref{eigenequation} has a solution with asymptotics \eqref{asymptotics.one} and \eqref{asymptotics.two}. 
In particular, $\sigma_{\text{ac}}(\Lambda_\varphi)$ has an embedded pure point at $E$. 
\end{theorem}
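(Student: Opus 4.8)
The plan is to run the modified Prüfer/EFGP analysis underlying the proof of Theorem~\ref{finite.main.thm}, but in reverse: instead of bounding the resonant contributions we engineer one. After the variation-of-parameters transformation of \eqref{eigenequation} at $E=\eta/2$ and the finite chain of averaging changes of variables---which removes all non-resonant oscillations using uniformly bounded variation and terminates with an integrable remainder by the uniform $L^p$ bound, hence is available under the hypotheses at hand---the solution is governed by a Prüfer radius $R(x)$ and phase $\theta(x)$, and it lies in $L^2(0,\infty)$, hence $E$ is an embedded eigenvalue, exactly when $\log R(x)\to-\infty$. With $\gamma_j(x)=c_j x^{-\delta}e^{-i\xi_j(x)}$ and $\delta\in(p^{-1},(p-2)^{-1}]$, a non-oscillatory term of order $\ell$ accumulates at rate $x^{1-\ell\delta}$, so it can diverge only for $\ell\le p-2$, while all oscillatory remainders and all terms of order $\ge p$ converge; thus only finitely many explicit coefficients matter.

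First I fix $\eta=\phi_{k_1}+\phi_{k_2}-\phi_{l_1}$ with the indices chosen so that, by rational independence of the $\phi_j$, $\eta\notin\Phi$; then $E=\eta/2\in S_p\setminus S_{p-2}=S_5\setminus S_3$. Reading off the expansion: there is no order-$1$ term since $\eta\notin\Phi$; the order-$2$ contribution to $\log R$ is purely imaginary and so only shifts the phase, giving $\theta(x)=\eta x+A_2 x^{1-2\delta}+A_4 x^{1-4\delta}+\cdots$ with $A_2,A_4$ real and equal, up to nonzero factors, to the left sides of \eqref{second.order.condition} and \eqref{fourth.order.condition}---and \emph{independent of the $\xi_j$}, since $|\gamma_j|=|c_j|x^{-\delta}$; and the first genuine non-oscillatory contribution to $\log R$ occurs at order $p-2=3$ and is precisely the resonance $\eta=\phi_{k_1}+\phi_{k_2}-\phi_{l_1}$. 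The crux is that the integrand of this order-$3$ term carries the slowly oscillating factor $e^{i(A_2 t^{1-2\delta}+A_4 t^{1-4\delta}+\cdots)}$ (the $\eta t$ phases cancel), so if $A_2$ or $A_4$ is nonzero a change of variables shows the term converges and contributes nothing; the remaining phase corrections are of order $\ge 5$ and already bounded. Imposing \eqref{second.order.condition} and \eqref{fourth.order.condition} removes exactly this dephasing. (One has $A_4\propto\big(\sum_j\tfrac{|c_j|^2}{\phi_j-\eta}\big)\big(\sum_j\tfrac{|c_j|^2}{(\phi_j-\eta)^2}\big)$, so \eqref{fourth.order.condition} in fact follows from \eqref{second.order.condition}; it is kept because that is the shape in which the order-$4$ coefficient naturally appears.)

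With the dephasing removed the order-$3$ term survives, and by the frequency count it is a single ``cross'' term with complex amplitude $\propto\kappa\,c_{k_1}c_{k_2}\overline{c_{l_1}}\,e^{-i(\xi_{k_1}(x)+\xi_{k_2}(x)-\xi_{l_1}(x))}$, $\kappa\neq0$ explicit (a rational function of the $\phi_j-\eta$); its real part drives $\log R$, and its imaginary part would, if nonzero, produce an order-$3$ phase correction that again dephases. Taking $\xi_j\equiv\beta_j$ constant---trivially satisfying \eqref{uniformly.b.v.condition}---I choose the value of $\beta_{k_1}+\beta_{k_2}-\beta_{l_1}$ making this amplitude real and negative; since $\int^x t^{-3\delta}\,dt\to\infty$ as $\delta\le(p-2)^{-1}=\tfrac{1}{3}$, this gives $\log R(x)\to-\infty$ with no dephasing. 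Then the asymptotic formula from the proof of Theorem~\ref{finite.main.thm}, in the regime $\log R\to-\infty$, produces a solution of \eqref{eigenequation} with asymptotics \eqref{asymptotics.one}--\eqref{asymptotics.two} lying in $L^2(0,\infty)$, so $E=\eta/2\in S_5\setminus S_3$ is an embedded eigenvalue. For the unconditional first assertion one checks in addition that \eqref{second.order.condition} has solutions with every $c_j\neq0$---it is one real constraint on the positive numbers $|c_j|^2$, solvable once the $\phi_j-\eta$ are not all of one sign, which one arranges by labeling the frequencies already at $M=3$---so that $\kappa\,c_{k_1}c_{k_2}\overline{c_{l_1}}\neq0$ and the rotation by $e^{-i(\beta_{k_1}+\beta_{k_2}-\beta_{l_1})}$ really does reach a negative real value.

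The main obstacle is the bookkeeping behind all of this: the finite normal-form reduction must be run precisely enough to confirm that the order-$2$ and order-$4$ non-oscillatory phase coefficients are exactly the quantities in \eqref{second.order.condition} and \eqref{fourth.order.condition}, to pin down the order-$3$ ``cross'' amplitude and check $\kappa\neq0$ (and that its real and imaginary parts can be controlled together by the single free constant $\beta_{k_1}+\beta_{k_2}-\beta_{l_1}$), and to verify that every oscillatory term and every term of order $\ge p$ contributes only convergently---this last point being where uniformly bounded variation, $\delta>p^{-1}$, and the Riemann--Lebesgue-type estimates used to prove Theorem~\ref{finite.main.thm} enter. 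A minor further point, needed only if one prefers non-constant $\xi_j\in C^1$ for flexibility in the ``moreover'' clause, is to keep $\int^\infty|\xi_j'(t)|\,t^{-\delta}\,dt<\infty$, so that $\gamma_j$ still has uniformly bounded variation.
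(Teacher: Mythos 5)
Your strategy tracks the paper's closely: fix $\eta=\phi_{k_1}+\phi_{k_2}-\phi_{l_1}$, which lies in $S_5\setminus S_3$ by rational independence; run the Pr\"ufer expansion of $\partial_x\log r$ and $\partial_x\theta$ from the proof of Theorem~\ref{finite.main.thm}; observe that the non-oscillatory $K=0$ terms contributing to $\partial_x\theta$ at orders $2$ and $4$ are exactly the quantities in \eqref{second.order.condition} and \eqref{fourth.order.condition} (this is the paper's $\Omega(x)$); and argue that once these are killed, the surviving order-$3$ resonant term, which cannot be removed by Lemma~\ref{exchange.lemma} because $g_{3,1}$ has a pole at $\eta$, drives $\log r$. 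Your observation that \eqref{fourth.order.condition} is in fact implied by \eqref{second.order.condition}---because the double sum factors as $2\bigl(\sum_j|c_j|^2(\phi_j-\eta)^{-1}\bigr)\bigl(\sum_j|c_j|^2(\phi_j-\eta)^{-2}\bigr)$ and the second factor is strictly positive---is correct and is a cleaner way to see the point the paper establishes by the ad hoc identities at the end of Section~\ref{SectionFinitelyManySummands}.

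There is, however, a real gap at the final step. You take $\xi_j\equiv\beta_j$ constant, choose the single phase $\beta_{k_1}+\beta_{k_2}-\beta_{l_1}$ to make the order-$3$ amplitude real and negative, and then assert that $\log R\to-\infty$ ``with no dephasing.'' This ignores the feedback of the resonant term on the Pr\"ufer angle itself. Even with $\Omega\equiv 0$, the equation for $\theta$ still contains the term $-\Im\bigl(\Lambda\,x^{-3\delta}\,e^{i(\xi+2\theta)}\bigr)$, so with $\psi:=\xi+2\theta$ and constant $\xi$ one is left with the nonlinear ODE $\psi'=-2\Im(\Lambda e^{i\psi})x^{-3\delta}+L^1$. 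Its fixed point where $\Re(\Lambda e^{i\psi})<0$ (which is what would make $\log r\to-\infty$) is \emph{repelling}, while the attracting fixed point has $\Re(\Lambda e^{i\psi})>0$ and drives $\log r\to+\infty$. Choosing the constant $\beta_{k_1}+\beta_{k_2}-\beta_{l_1}$ merely rotates where these fixed points sit; it does not suppress the drift of $\theta$. To obtain the asymptotics of Lemma~\ref{special.lemma} one must either fine-tune the boundary value $\theta(0)$ so that $\psi$ lies on the one-dimensional stable set of the repelling fixed point (a shooting argument you do not supply, and one that is delicate in the presence of the $L^1$ error), or else---as the paper does---use genuinely $x$-dependent $\xi_j$, constructed via the fixed-point argument of \cite[Lemma~6.3]{L2013SlowWvN}, which cancels the drift of $2\theta$ and forces $\xi+2\theta$ to converge to a prescribed value. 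As written, your constant-$\xi$ shortcut does not close the argument, and the phrase ``with no dephasing'' is exactly where the missing work is hidden.

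Two smaller points: you assert $\kappa\neq 0$ without computing it (in the $p=5$ case this requires evaluating $f_{3,1}$ at $\eta$, which is nonzero but should be checked), and you should be explicit that the boundary condition $\omega$ is what gets fixed at the end to make $E$ an eigenvalue, since ``$\sigma_{\text{ac}}$ has an embedded pure point'' is a boundary-condition-dependent statement.
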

We will also show that both conditions \eqref{second.order.condition} and \eqref{fourth.order.condition} may be simultaneously satisfied. 

If $\varphi$ is of Wigner-von Neumann type with infinitely many nonzero $c_j$ and $p \geq 3$, each of the frequencies $\phi_j$ is of the form \eqref{form.of.singularities} with $n=1$, so that the set $S_p$ is in general infinite. 
In this case we instead bound the Hausdorff dimension of the singular part of the spectral measure. 
This requires careful bookkeeping of the critical points in $\eta$. 
We will summarize the critical point information via the following recursively defined rational functions: let $h_I(\eta; [\phi_{k_j}]_{j=1}^P; [\phi_{l_j}]_{j=1}^{P-1})$ be defined for $I=2P-1$ by $h_1(\eta;[\phi_{k_1}]) = (\phi_{k_1}-\eta)\inv$ and 
\begin{align}
	h_I(\eta; [\phi_{k_j}]_{j=1}^P; [\phi_{l_j}]_{j=1}^{P-1}) &= \notag \\
	&\hspace{-.5in}\frac{1}{\sum_{j=1}^{P}\phi_{k_j} - \sum_{j=1}^{P-1}\phi_{l_j}-\eta} \sum_{m=0}^{I-1}h_m(\eta; [\phi_{k_j}]_{j=1}^{\frac{m+1}{2}}; [\phi_{l_j}]_{j=1}^{\frac{m-1}{2}})h_{I-1-m}(\eta; [\phi_{k_j}]_{j=\frac{m+3}{2}}^P; [\phi_{l_j}]_{j=\frac{m+1}{2}}^{P-2}), \label{h.I}
\end{align}
where we denote by $[\phi_j]_{j=1}^n$ an ordered $n$-tuple of frequencies. 
For even $I$, we define $h_I\equiv 0$. 
The functions $h_I$ are analogous to the $h_J$ of \cite{L2014}, with additional structure added in order to omit sums of frequencies not of the form \eqref{form.of.singularities}. 
Since the first three conditions of the following lemma are satisfied by our operator data of Wigner-von Neumann type by definition, the following lemma gives a sufficient condition in terms of the functions $h_I$ for boundedness of all solutions at $E$:
\begin{lemma}
\label{crux.lemma}
	Let operator data $\varphi$ be given by \eqref{WvNPotential}, with $c_j \in \C$, $\phi_j \in \R$, and let $\eta \in \R$ such that
	\begin{enumerate}
		\item $\sup_j \Var(\gamma_j, (0,\infty)) := \tau < \infty$;  
		\item for some $p= 2n+1$, $\sup_j \int_{0}^{\infty} |\gamma_j(t)|^p dt := \sigma < \infty$;
		\item $\sum_{j=1}^{\infty} |c_j| < \infty$;
		\item for odd $I = 1, \ldots, p-2$ and $P:= \frac{I+1}{2}$, $$\sum_{\substack{k_1, \ldots, k_P = 1 \\ l_1, \ldots, l_{P-1} = 1}}^{\infty} |h_I(\eta; [\phi_{k_j}]_{j=1}^P; [\phi_{l_j}]_{j=1}^{P-1}) \prod_{j=1}^{P}c_{k_j}\prod_{j=1}^{N}\overline{c_{l_j}}| < \infty.$$ \label{small.divisors.condition}
	\end{enumerate}
	Then, for $E = \frac{\eta}{2}$, all solutions of \eqref{eigenequation} are bounded. 
\end{lemma}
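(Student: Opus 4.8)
The plan is to decouple the first-order system \eqref{eigenequation} into two scalar equations, pass to Pr\"ufer-type polar variables, and then bound the resulting Pr\"ufer amplitude by a finite renormalization whose remainders are controlled, round by round, by hypotheses (i)--(iv). Concretely: after factoring out the free phases, $U=\diag(e^{-iEx},e^{iEx})V$, the eigenequation reads $V_1'=i\varphi e^{i\eta x}V_2$, $V_2'=-i\bar\varphi e^{-i\eta x}V_1$; using $\eta\in\R$ one checks that $u:=V_1+\overline{V_2}$ and $w:=V_1-\overline{V_2}$ satisfy the decoupled scalar equations $u'=i\varphi e^{i\eta x}\overline u$ and $w'=-i\varphi e^{i\eta x}\overline w$, both of the same type, and since $V_1=\tfrac12(u+w)$, $\overline{V_2}=\tfrac12(u-w)$, it suffices to show every solution of each is bounded; I focus on $u'=i\varphi e^{i\eta x}\overline u$. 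Writing $u=|u|e^{i\chi}$ (a nontrivial $u$ vanishes nowhere, by uniqueness for the linear equation) gives the clean Pr\"ufer system
\[
	\big(\log|u|\big)'=-\Im\!\big(\varphi e^{i\eta x}e^{-2i\chi}\big),\qquad \chi'=\Re\!\big(\varphi e^{i\eta x}e^{-2i\chi}\big),
\]
so the goal becomes boundedness of $\log|u|$.

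Inserting $\varphi=\sum_j c_j e^{-i\phi_j x}\gamma_j$ and expanding $e^{-2i\chi}$ about a reference value (legitimate to the order needed because $\chi'=O(\varphi)$) presents $(\log|u|)'$ as a superposition of terms $C\,\big(\prod_{r=1}^{a}\gamma_{j_r}\big)\big(\prod_{s=1}^{b}\overline{\gamma_{l_s}}\big)\,e^{i\omega x}$ with $\omega=(a-b)\eta-\sum_r\phi_{j_r}+\sum_s\phi_{l_s}$, where $a$, $b$ count the factors of $\varphi$, $\bar\varphi$. I would then process this superposition round by round. Hypothesis (i) makes each $\gamma_j$ bounded --- a BV function whose limit at $+\infty$ must vanish by the $L^p$-bound (ii) satisfies $\|\gamma_j\|_\infty\le\tau$ --- and $\gamma_j'$ a finite measure of mass $\le\tau$, so that for $\omega\ne0$ the antiderivative of $A(x)e^{i\omega x}$ with $A$ a product of such $\gamma_j$'s is bounded, and integration by parts against $e^{i\omega x}$ gives a bounded boundary term (divided by $\omega$) plus a remainder carrying one more factor $\gamma$. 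The crux is that before integrating by parts one \emph{recombines} the ``unbalanced'' part, i.e.\ the terms with $a-b\ne1$: each such term is either real, hence invisible to $\Im(\cdot)$, or has a coefficient that cancels upon symmetrizing the frequency labels (prototype: $\tfrac1{\phi_j-\eta}+\tfrac1{\phi_l-\eta}=0$ when $\phi_j+\phi_l=2\eta$), or telescopes against a neighbour via $\omega e^{i\omega x}=-i\tfrac{d}{dx}e^{i\omega x}$. Thus integration by parts is only ever performed against frequencies of the form $\sum_{j=1}^{P}\phi_{k_j}-\sum_{j=1}^{P-1}\phi_{l_j}-\eta$, and the divisors accumulated over successive rounds assemble exactly into the rational functions $h_I$ of \eqref{h.I} --- this is the point at which the analysis diverges from the Schr\"odinger case of \cite{L2013SlowWvN}, and it is what confines the surviving resonances to those of the form \eqref{form.of.singularities}. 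At the $P$-th round the genuinely resonant terms are those with $a-b=1$ and $\eta=\sum\phi_{k_j}-\sum\phi_{l_j}$ of order $P\le n$; hypothesis (iv) excludes these, since a vanishing denominator of $h_I$ for some odd $I\le p-2$ makes the corresponding series in (iv) diverge, and it simultaneously dominates the near-resonant boundary terms, whose coefficient at round $P$ is $h_{2P-1}(\eta;\dots)\prod c_{k_j}\prod\overline{c_{l_j}}$ (the remaining part of the multi-index sum being finite by $\sum_j|c_j|<\infty$).

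Each round raises $a+b$ by one, so after $p-1=2n$ rounds every surviving integrand is a product of $p=2n+1$ of the $\gamma_j$'s; by H\"older and the uniform $L^p$-bound (ii) such a product lies in $L^1(0,\infty)$ with norm $\le\sigma$, and the associated multi-index series converges because the divisor accumulated by then is of the shape $h_1\cdot h_{p-2}$, controlled by (iv) for $I=1$ and $I=p-2$ (no further integration by parts, hence no new small divisor, is performed). Consequently $(\log|u|)'$ is the derivative of a bounded function plus an $L^1(0,\infty)$ function, so $\log|u|$, and likewise $\log|w|$, is bounded; therefore $V$, and with it every solution $U$ of \eqref{eigenequation}, is bounded.

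The analytic ingredients --- integration by parts against functions of bounded variation, H\"older's inequality at the terminal step, uniqueness for the decoupled equation --- are routine; the difficulty is entirely combinatorial. The main obstacle will be to verify that the recombination of the unbalanced terms is exact, that the only small divisors ever encountered are those packaged into \eqref{h.I}, that the surviving resonances are precisely those of the form \eqref{form.of.singularities}, and that every multi-index series produced is absolutely convergent, so that the term-by-term manipulations are legitimate. This is precisely where the Dirac analysis genuinely departs from \cite{L2013SlowWvN}.
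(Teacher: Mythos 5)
Your setup is sound: the reduction $U=\diag(e^{-iEx},e^{iEx})V$ followed by $u=V_1+\overline{V_2}$, $w=V_1-\overline{V_2}$ does produce the decoupled $\R$-linear scalar equations $u'=i\varphi e^{i\eta x}\overline u$, $w'=-i\varphi e^{i\eta x}\overline w$, and the polar form of $u$ recovers exactly the paper's Pr\"ufer variables up to a constant rotation (indeed $u=2(1+i)Z$ for the paper's $Z=re^{-i\theta}$, so $|u|$ plays the role of $r$, and the paper's parametrization \eqref{prufer.variables.defn} is precisely the $w\equiv0$ subspace). The iterative integration by parts is the right engine, and your identification of the divisors with $h_I$ and the terminal $L^p$/H\"older step match the paper's Lemmas~\ref{exchange.lemma}--\ref{abs.conv.lemma}.

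The gap is in the treatment of what you call the ``unbalanced'' terms with $a-b=0$ --- the paper's $\mathcal{S}_{I,0}$. You assert that each such term is either real (and killed by $\Im$), cancels under symmetrization, or telescopes via $\omega e^{i\omega x}=-i\frac{d}{dx}e^{i\omega x}$, so that $\log|u|$ is bounded directly. None of these three mechanisms disposes of the $K=0$ terms with nonzero frequency sum $\sum\phi_{k_j}-\sum\phi_{l_j}\neq0$. Lemma~\ref{zero.K.cancellation.lemma} makes $f_{I,0}$ purely imaginary, and the second part of that lemma lets the terms with $\sum\phi_{k_j}-\sum\phi_{l_j}=0$ pair up into a purely imaginary contribution; but for $\sum\phi_{k_j}-\sum\phi_{l_j}\neq0$ the needed symmetry $f_{I,0}(\eta;[\phi_k];[\phi_l])=f_{I,0}(\eta;[\phi_l];[\phi_k])$ fails (already for $I=2$ one has $f_{2,0}(\eta;[\phi_k];[\phi_l])=-i/(\phi_k-\eta)$, which depends only on the first slot), so these terms have a genuinely nonzero real part contributing to $(\log|u|)'$. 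And the telescoping you propose would introduce the reciprocal $1/(\sum\phi_{k_j}-\sum\phi_{l_j})$ --- an $\eta$-independent small divisor that is \emph{not} controlled by hypothesis~(iv), which only controls denominators of the form $\sum\phi_{k_j}-\sum\phi_{l_j}-\eta$ with $K=1$. In the finite-type setting (Theorem~\ref{finite.main.thm}) one is allowed to apply Lemma~\ref{exchange.lemma} once more to these terms because there are only finitely many of them, but that escape is unavailable here.

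The paper resolves this with an argument you omit entirely: since the $K=0$ terms carry no factor $e^{iK(\eta x+2\theta)}$, $\sum_{I=2}^p\int_0^x\mathcal{S}_{I,0}\,dt$ is \emph{independent of the solution}. Subtracting the expansions of $\log(Z_U/Z_U(0))$ and $\log(Z_V/Z_V(0))$ for two linearly independent solutions removes this unbounded piece and shows $r_U/r_V$ has a finite nonzero limit; one then invokes the constancy of the Wronskian $W[U,V]=4r_Ur_V\sin(\theta_U-\theta_V)$, chooses $V$ so that $\theta_U-\theta_V$ stays in $(\pi/4,3\pi/4)$ for large $x$, and deduces that $r_Ur_V$, hence $r_U$, is bounded. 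Without this two-step device --- solution-independence of $\mathcal{S}_{I,0}$ followed by the Wronskian bound --- the claim ``$(\log|u|)'$ is the derivative of a bounded function plus an $L^1$ function'' is unjustified, and the conclusion that $\log|u|$ is bounded does not follow from your argument.

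A secondary caution: ``expanding $e^{-2i\chi}$ about a reference value'' reads as a Taylor expansion in $\chi-\chi(x_0)$, which is the route taken in the OPUC setting of \cite{L2011} but is explicitly avoided in the continuum; the correct move (as in Lemma~\ref{exchange.lemma}) is to trade a factor of $\chi'$, not powers of $\chi$, which is what keeps the recursion in $K$ moving by $\pm1$ only. If that is what you intend, the phrasing should be corrected; if not, the combinatorics will not assemble into the $h_I$.
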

With this lemma in hand, we will be able to bound the Hausdorff dimension of the exceptional set, arriving at our second main result:
\begin{theorem}
\label{infinite.main.thm}
Let operator data $\varphi$ be of Wigner-von Neumann type satisfying the $L^p$ condition for some odd $p$.
Then the set of energies $E$ for which there exists an unbounded solution to \eqref{eigenequation} has Hausdorff dimension at most $(p-2)\alpha$, and $\R$ is the essential support of the absolutely continuous spectrum of $\Lambda_\varphi$. 
\end{theorem}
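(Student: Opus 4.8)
The plan is to combine Lemma~\ref{crux.lemma} with an explicit covering of the exceptional set. For operator data of Wigner--von Neumann type the hypotheses (1)--(3) of Lemma~\ref{crux.lemma} are automatic: (1) and (2) are \eqref{uniformly.b.v.condition} and \eqref{uniformly.L.p.condition}, and $\sum_j|c_j|<\infty$ follows from \eqref{alpha.type.decay.condition} since $\alpha<1$. Hence, with $P=(I+1)/2$, the set $\cE$ of energies $E=\eta/2$ for which \eqref{eigenequation} has an unbounded solution is contained in $\bigcup_I B_I$, the union over odd $I$ with $1\le I\le p-2$ of
\[
	B_I=\Big\{\eta\in\R:\ \sum_{k_1,\dots,k_P,\,l_1,\dots,l_{P-1}=1}^{\infty}\Big|h_I\big(\eta;[\phi_{k_j}]_{j=1}^{P};[\phi_{l_j}]_{j=1}^{P-1}\big)\prod_{j=1}^{P}c_{k_j}\prod_{j=1}^{P-1}\overline{c_{l_j}}\Big|=\infty\Big\}.
\]
Since Hausdorff dimension is stable under finite unions and $\alpha<\tfrac{1}{p-2}$, it suffices to prove $\dim_H B_I\le I\alpha$ for each such $I$, which then gives $\dim_H\cE\le(p-2)\alpha<1$.

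The bound on $\dim_H B_I$ is the technical core, and I would prove it by a covering estimate along the lines of \cite{L2014}. Expanding the recursion \eqref{h.I}, $h_I$ is a finite sum of terms, each a product of exactly $I$ factors $(s-\eta)^{-1}$ in which $s$ is a consecutive signed partial sum of the frequencies indexed by $k_1,\dots,k_P,l_1,\dots,l_{P-1}$ --- and there are exactly $I=2P-1$ free indices. Fix one such product term and decompose its $I$ divisors dyadically, $|D_\ell(\eta)|\sim 2^{-j_\ell}$ for $\ell=1,\dots,I$ (the contribution of divisors of size $\gtrsim 1$ being summable because $\sum_j|c_j|<\infty$). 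Testing the divergence defining $B_I$ against the summable gauge $\prod_\ell 2^{-\varepsilon j_\ell}$ shows that, for every $\varepsilon>0$,
\[
	\sum_{\substack{k_1,\dots,k_P,\,l_1,\dots,l_{P-1}\\ |D_\ell(\eta)|\lesssim 2^{-j_\ell}\ \forall\,\ell}}\ \prod_{j=1}^{P}|c_{k_j}|\prod_{j=1}^{P-1}|c_{l_j}|\ \ge\ \prod_{\ell=1}^{I}2^{-(1+\varepsilon)j_\ell}
\]
for infinitely many $\vec j=(j_1,\dots,j_I)$. For fixed $\vec j$, cover $\R$ by intervals of length $\sim 2^{-\max_\ell j_\ell}$; using $(\sum_n a_n)^{\alpha}\le\sum_n a_n^{\alpha}$ for $\alpha\in(0,1)$, Chebyshev's inequality, the fact that the smallest divisor pins $\eta$ to $O(1)$ of these intervals once the indices are fixed, and $\sum_{k_1,\dots,k_P,\,l_1,\dots,l_{P-1}}\prod_{j=1}^{P}|c_{k_j}|^{\alpha}\prod_{j=1}^{P-1}|c_{l_j}|^{\alpha}=\big(\sum_j|c_j|^{\alpha}\big)^{I}<\infty$, the set on which the displayed inequality holds is covered by $O\big(\prod_\ell 2^{(1+\varepsilon)\alpha j_\ell}\big)$ of these intervals. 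Summing the resulting $s$-contents over $\vec j$ --- the geometric sums in each $j_\ell$ being dominated by the largest index --- sends the $s$-dimensional Hausdorff content to $0$ whenever $s>I(1+\varepsilon)\alpha$, so letting $\varepsilon\downarrow 0$ yields $\dim_H B_I\le I\alpha$.

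Since $\dim_H\cE\le(p-2)\alpha<1$, $\cE$ has Lebesgue measure zero, so for Lebesgue-a.e.\ $E\in\R$ every solution of \eqref{eigenequation} is bounded and none is subordinate at $E$. By subordinacy theory in the Dirac / Zakharov--Shabat setting (cf.\ \cite{B91}), the spectral measure restricted to the set of such $E$ is purely absolutely continuous and this set is an essential support of the absolutely continuous part; being of full measure in $\R$, it follows that $\R$ is an essential support of $\sigma_{\mathrm{ac}}(\Lambda_\varphi)$. I expect the main obstacle to lie in the middle paragraph: confirming that every term produced by \eqref{h.I} is genuinely a product of $I$ resonant divisors whose numerators are determined by the indices (so that the pinning step is legitimate), handling coincidences among these partial sums (which only improve the estimate), and organizing the simultaneous sum over the scales $\vec j$, over the terms of $h_I$, and over the infinitely-often structure so that the Hausdorff contents actually converge; recording the precise form of subordinacy theory needed in this gauge is a smaller point.
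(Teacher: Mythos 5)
Your reduction is the same as the paper's: hypotheses (1)--(3) of Lemma~\ref{crux.lemma} are automatic for Wigner--von Neumann type data (with $\sum_j|c_j|<\infty$ from \eqref{alpha.type.decay.condition} and $\alpha<1$), so the exceptional set is contained in the union over odd $I\le p-2$ of the sets where hypothesis~(4) fails, and the final passage to absence of subordinate solutions and essential support via Behncke's subordinacy theory is also the same. Where you diverge is in the technical core, the bound $\dim_H B_I\le I\alpha$. You propose a direct covering argument: expand $h_I$ via \eqref{h.I} into sums of products of $I$ resonant divisors, dyadically localize each divisor, apply Chebyshev with the $\alpha$-th power and $\bigl(\sum_j|c_j|^\alpha\bigr)^I<\infty$ to count covering intervals at each scale $\vec j$, and sum the $s$-contents. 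The paper instead runs the dual (Rogers--Taylor / Frostman) argument in Lemmas~\ref{UBH.lemma} and~\ref{Hdim.bound.lemma}: if $\dim_H B_I>I\alpha$, restrict a suitable Hausdorff measure to obtain a finite uniformly $\beta$-H\"older measure $\nu$ charging $B_I$ for some $\beta>I\alpha$, then show $\int\sum|h_I\prod c|^\alpha\,d\nu<\infty$ and conclude the defining sum is finite $\nu$-a.e., a contradiction. The key advantage of the paper's route is that the integral estimate is proved by induction directly on the recursion \eqref{h.I}, with H\"older's inequality splitting the three factors in each summand and the Catalan recursion $C_n=\sum_{j}C_jC_{n-1-j}$ absorbing the combinatorics, so one never needs to expand $h_I$ into explicit products of divisors or to organize the multi-index/scale double sum you flag as the main obstacle. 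Your approach should also work, and it is the more hands-on of the two, but the bookkeeping you anticipate (pinning $\eta$ when partial sums nearly coincide, and summing the $s$-contents over $\vec j$ with the maximum-index bottleneck) is precisely what the $\nu$-integral plus Catalan induction sidesteps; if you carry it out, be careful that the gain from the smallest divisor pinning $\eta$ is exactly what makes the geometric sum over $\max_\ell j_\ell$ converge only for $s>I(1+\varepsilon)\alpha$.
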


In Section~\ref{SectionSubordinacyAndPruferVariables}, we define the Pr\"ufer variables to be used throughout. In Section~\ref{SectionNonremovableSingularities} we prove the form \eqref{form.of.singularities} of critical points. In Section~\ref{SectionFinitelyManySummands} we prove Theorems \ref{finite.main.thm} and \ref{example.thm}. In Section~\ref{SectionInfinitelyManySummands} we prove Theorem \ref{infinite.main.thm}. 
\end{section}

\begin{section}{Subordinacy and Pr\"ufer Variables} \label{SectionSubordinacyAndPruferVariables}
A subordinate solution at $E = \eta/2$ is a solution $U(x,\eta)$ to \eqref{eigenequation} such that for any linearly independent solution at $E$, $V(x,\eta)$, we have
\[
	\xlim \frac{\int_{0}^{x}\|U(t,\eta)\|^2 dt}{\int_{0}^{x}\|V(t,\eta)\|^2dt} = 0.
\]
For more on subordinacy theory, we refer the reader to \cite{GP87}. 
By work of Behncke \cite{B91}, for both Schr\"odinger and Dirac operators the nonexistence of subordinate solutions on a set of energies $E$ is sufficient to conclude purely absolutely continuous spectrum there. 
We will find an explicit set of energies $E$ for which all solutions are bounded. 
In the context of Schr\"odinger operators, work of Stolz \cite{S92} shows boundedness of all solutions is sufficient to conclude the nonexistence of subordinate solutions. 
In the same way, the following lemma, following ideas from \cite{S92}, completes the desired chain of implications in the Dirac operator setting. 
\begin{lemma}
\label{stolz.type.lemma}
If all solutions of \eqref{eigenequation} at $E$ are bounded, then there is no subordinate solution at $E$. 
\end{lemma}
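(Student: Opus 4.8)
The plan is to adapt Stolz's argument \cite{S92} to the Dirac setting: I will show that if every solution of \eqref{eigenequation} at $E$ is bounded, then $\int_0^x\|U(t,\eta)\|^2\,dt$ grows exactly linearly in $x$ for every nonzero solution $U$, from which it is immediate that the quotient in the definition of a subordinate solution cannot tend to $0$.

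First I would rewrite \eqref{eigenequation} as a first-order linear system $U'=M(x)\,U$ by inverting the constant matrix $\diag(i,-i)$. One checks that $M(x)$ is traceless (its diagonal entries are $\mp iE$, the off-diagonal ones coming from $\varphi$), so by Liouville's formula the Wronskian of any two solutions is a nonzero constant $w$. Fix two linearly independent solutions and let $\Psi(x)$ be the associated fundamental matrix, so that $\det\Psi(x)\equiv w\neq 0$. By hypothesis both columns of $\Psi$ are bounded, hence $\|\Psi(x)\|\le C$ for all $x>0$; since $\Psi$ is $2\times2$, the entries of $\operatorname{adj}\Psi(x)$ are, up to sign, the entries of $\Psi(x)$, so $\|\Psi(x)^{-1}\|=|w|^{-1}\|\operatorname{adj}\Psi(x)\|$ is bounded as well, say by $C'$.

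Now every solution has the form $U(x,\eta)=\Psi(x)\,v$ for a constant vector $v\in\C^2$, with $v\neq 0$ when $U\not\equiv 0$. Then $\|v\|=\|\Psi(x)^{-1}U(x,\eta)\|\le C'\|U(x,\eta)\|$ gives a uniform lower bound $\|U(x,\eta)\|\ge (C')^{-1}\|v\|>0$, while $\|U(x,\eta)\|\le C\|v\|$ gives the matching upper bound, so $(C')^{-2}\|v\|^2\,x\le \int_0^x\|U(t,\eta)\|^2\,dt\le C^2\|v\|^2\,x$. Applying this to a putative subordinate solution $U$ and to any solution $V$ linearly independent from it, the quotient $\int_0^x\|U(t,\eta)\|^2\,dt\big/\int_0^x\|V(t,\eta)\|^2\,dt$ lies in a fixed interval $[m,M]\subset(0,\infty)$, so its $\xlim$ (if it exists) is not $0$; in particular $U$ is not subordinate. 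Since $U$ was arbitrary, no subordinate solution at $E$ exists.

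I expect no real obstacle here. The only two points deserving a line of care are the tracelessness of $M(x)$ (which makes the Wronskian constant) and the passage from ``$\Psi$ bounded'' to ``$\Psi^{-1}$ bounded'', which is automatic in dimension two because the adjugate is linear in the entries. Note that the argument uses nothing about the Wigner--von Neumann structure of $\varphi$: it is a pure consequence of boundedness of all solutions together with constancy of the Wronskian, exactly as in the Schr\"odinger case treated by Stolz.
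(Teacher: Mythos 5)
Your proposal is correct and rests on the same core mechanism as the paper's proof: the Wronskian of two linearly independent solutions is a nonzero constant, and boundedness of all solutions then forces a positive lower bound on $\int_0^x\|U\|^2\,dt\big/\int_0^x\|V\|^2\,dt$. The paper gets there slightly more directly (an upper bound $\int_0^x\|V\|^2\le M_V^2 x$ together with the pointwise lower bound $\|U(x)\|\ge|W|/M_V$ from the Wronskian inequality), whereas you route through the fundamental matrix and its adjugate to obtain two-sided bounds on $\|U(x)\|$; the two arguments are essentially the same.
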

\begin{proof}
Let $U$ and $V$ be two linearly independent solutions at $E$. 
Since both are bounded, we can denote the (finite) suprema of their norms by $M_U$ and $M_V$, respectively. 
Clearly, for $x > 0$ we have
\[
	\int_{0}^{x} \|V(t)\|^2 dt \leq M_V^2 x.
\] 
The Wronskian of any $f,g \in H^1( (0,\infty); \C^2)$, $W[f,g](x),$ is defined as $f(x)^tJg(x)$ for $J = \begin{pmatrix} 0 & i \\ -i & 0 \end{pmatrix}$. It is straightforward to show $W[U,V](x) \equiv W$ is a nonzero constant. 
Thus, for any $x$ we have
\[
	|W| = |U(x)^t J V(x)| \leq \|U(x)\|\|JV(x)\| \leq \|U(x)\|M_V.
\] 
Consequently, for any $x>0$,
\[
	\frac{\int_{0}^{x} \|U(t)\|^2 dt}{\int_{0}^{x} \|V(t)\|^2 dt} \geq \frac{|W|^2}{M_V^4} > 0.
\]
Since $U, V$ were chosen arbitrarily, taking $x \to \infty$ shows there is no subordinate solution at $E$. 
\end{proof}
In order to prove boundedness of solutions, we perform a Pr\"ufer transformation to an arbitrary solution $U(x,\eta)$. 
Pr\"ufer variables have been used many times in the spectral theory of Schr\"odinger and Dirac operators (e.g., \cite{K2005,KLS98,L2014,L2013DiscreteGBV,L2011,L2013SlowWvN,LO16,LO2015,P26,S2016,ST2010}).
We set 
\[
	E = \frac{\eta}{2},
\]
and for a solution $U(x)$ of \eqref{eigenequation} at $E$, we define the Pr\"ufer amplitude $r$ and Pr\"ufer angle $\theta$ by
\begin{equation}
	U(x,\eta) = r(x,\eta) \binom{(1+i)e^{-i(\frac{\eta}{2}x + \theta(x,\eta))}}{(1-i)e^{i(\frac{\eta}{2}x + \theta(x,\eta))}}. \label{prufer.variables.defn}
\end{equation}
The ambiguity in $\theta$ is addressed by fixing $\theta(0,\eta) \in [-\pi,\pi)$ and demanding $\theta$ be continuous in $x$. 
So defined, the variables $r$ and $\theta$ satisfy the following system of differential equations (recall that $R = \Re \varphi$ and $I = \Im \varphi$): 
\begin{align*}
	-i\partial_x\theta &= iR(x) \sin(\eta x+ 2\theta(x,\eta)) + iI(x)\cos(\eta x + 2\theta(x,\eta)), \\
	\partial_x\log r &= R(x)\cos(\eta x + 2\theta(x,\eta)) - I(x)\sin(\eta x + 2\theta(x,\eta)).
\end{align*}
Defining the complex Pr\"ufer variable $Z(x,\eta) := r(x,\eta)e^{-i\theta(x,\eta)}$, we have
\[
	\frac{\partial_x Z(x,\eta)}{Z(x,\eta)} = \partial_x\log r(x,\eta) - i\partial_x\theta(x,\eta) = e^{i(\eta x + 2\theta(x,\eta))} \varphi(x).
\]
Thus, $\partial_x\log r(x,\eta) = \Re(e^{i(\eta x + 2\theta(x,\eta))} \varphi(x))$, and
\begin{equation}
	\log \frac{r(x,\eta)}{r(0,\eta)} = \Re \int_{0}^{x} e^{i(\eta t + 2\theta(t,\eta))} \varphi(t) dt. \label{log.r}
\end{equation}
To prove boundedness of solutions at $\eta$, it suffices to bound \eqref{log.r}.
Moreover, we have
\begin{equation}
	\partial_x\theta(x,\eta) = -\Im e^{i(\eta x + 2\theta(x,\eta))} \varphi(x), \label{theta.prime}
\end{equation}
which identity will prove useful in many of our calculations below due to Lemma \ref{exchange.lemma}. 
In later sections, we will often suppress the $\eta$- and $x$-dependence of $r$ and $\theta$ for conciseness. 
\end{section}

\begin{section}{Nonremovable singularities} \label{SectionNonremovableSingularities}
Consider the following reindexing of \cite[Lemma 2.1]{L2014}:
\begin{lemma}
\label{exchange.lemma}
Let $\eta \in \R$ be fixed. Let $P,N\in \Z$ with $P \geq 1$, $I = P+N$, and $K = P-N \geq 0$. 
Then for $0 \leq a < b < \infty$, $\Gamma(x) = \gamma_{k_1}(x) \cdots \gamma_{k_P}(x)\overline{\gamma_{l_1}} \cdots \overline{\gamma_{l_N}}$, and $\phi = \phi_{k_1} + \cdots + \phi_{k_P} - (\phi_{l_1} + \cdots + \phi_{l_N})$,
\[
	\left|\int_{a}^{b} \Big((\phi - K\eta)e^{Ki(\eta t + 2\theta(t,\eta))}e^{-i\phi t}\Gamma(t) - 2Ke^{Ki(\eta t + 2 \theta(t,\eta))}e^{-i\phi t}\Gamma(t)\frac{\partial\theta}{\partial x}(t,\eta) \Big)dt \right| \leq 2\tau^I,
\]
where $\tau = \sup_k \Var(\gamma_k,(0,\infty))$.
\end{lemma}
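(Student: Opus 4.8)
The plan is to perform a single integration by parts. Writing $f(t) := K\big(\eta t + 2\theta(t,\eta)\big) - \phi t$, so that $e^{if(t)} = e^{Ki(\eta t + 2\theta(t,\eta))}e^{-i\phi t}$, note that $f$ is real-valued, hence $|e^{if}|\equiv 1$, and that $\theta(\cdot,\eta)$ is locally Lipschitz (it is an antiderivative of the locally bounded right-hand side of \eqref{theta.prime}), hence absolutely continuous, so that $\tfrac{d}{dt}e^{if(t)} = i\big(K\eta + 2K\,\partial_x\theta(t,\eta) - \phi\big)e^{if(t)}$ for a.e.\ $t$. Comparing this with the integrand of the lemma shows that the integrand equals $i\,\Gamma(t)\,\tfrac{d}{dt}e^{if(t)}$, so, since $e^{if}$ is absolutely continuous, the quantity to be bounded is $\big|\int_a^b \Gamma(t)\,d(e^{if(t)})\big|$.

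Since $\Gamma = \gamma_{k_1}\cdots\gamma_{k_P}\,\overline{\gamma_{l_1}}\cdots\overline{\gamma_{l_N}}$ is a product of functions of bounded variation, it is itself of bounded variation, and $e^{if}$ is continuous; as the two therefore have no common discontinuities, Riemann--Stieltjes integration by parts applies:
\[
	\int_a^b \Gamma(t)\,d\big(e^{if(t)}\big) = \Gamma(b)\,e^{if(b)} - \Gamma(a)\,e^{if(a)} - \int_a^b e^{if(t)}\,d\Gamma(t).
\]
Taking absolute values and using $|e^{if}|\equiv 1$ bounds the left side by $|\Gamma(a)| + |\Gamma(b)| + \Var(\Gamma,[a,b])$.

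It remains to estimate this by $2\tau^I$. Each $\gamma_j$ is of bounded variation, hence has a finite limit at $+\infty$, and that limit is $0$ because of the uniform $L^p$ hypothesis; therefore $\|\gamma_j\|_\infty \le \Var(\gamma_j,(0,\infty)) \le \tau$ and $\Gamma(t) \to 0$ as $t\to\infty$. Vanishing at infinity gives $|\Gamma(a)| \le \Var(\Gamma,[a,\infty))$ and $|\Gamma(b)| \le \Var(\Gamma,[b,\infty))$, and since $\Var(\Gamma,[a,b]) + \Var(\Gamma,[b,\infty)) = \Var(\Gamma,[a,\infty))$ the three terms add up to at most $2\,\Var(\Gamma,[a,\infty)) \le 2\,\Var(\Gamma,(0,\infty))$. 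Finally I would invoke the submultiplicativity of the total variation for functions that vanish at infinity, $\Var(gh,(0,\infty)) \le \Var(g,(0,\infty))\,\Var(h,(0,\infty))$ — which follows from the Stieltjes product rule $d(gh) = g_+\,dh + h_-\,dg$, the tail estimate $|g(t)| \le \Var(g,[t,\infty))$, and Fubini — and iterate it over the $I = P+N$ factors of $\Gamma$ to obtain $\Var(\Gamma,(0,\infty)) \le \tau^I$. This yields the claimed bound $2\tau^I$.

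Each step is elementary, and the lemma is really just an integration by parts dressed up; the two places that call for a little care are getting the sharp constant in the variation estimate — the crude product bound $\Var(gh) \le \|g\|_\infty\Var(h) + \|h\|_\infty\Var(g)$ would yield only $\Var(\Gamma)\le C_I\tau^I$ with $C_I$ growing in $I$, so it is essential to use that the $\gamma_j$ (hence $\Gamma$) decay at infinity, which is exactly where the $L^p$ condition enters — and verifying that $\theta$, and hence $e^{if}$, is continuous, which is what makes the Riemann--Stieltjes integration by parts legitimate.
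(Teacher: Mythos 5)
Your proof is correct, and the approach (integration by parts of $\int_a^b \Gamma\,d(e^{if})$, then a variation bound) is exactly the standard one: the paper itself gives no proof but cites \cite[Lemma 2.1]{L2014}, and that lemma is proved in just this way. Your identification of the integrand as $i\,\Gamma(t)\,\tfrac{d}{dt}e^{if(t)}$ is exactly right, and the justification that $e^{if}$ is absolutely continuous and continuous (so Riemann--Stieltjes integration by parts against the BV function $\Gamma$ is legitimate) is the right thing to check.

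Two points are worth flagging, both of which you did handle correctly but which are the genuinely delicate parts. First, the sharp constant $2$ comes from the step $|\Gamma(a)|+|\Gamma(b)|+\Var(\Gamma,[a,b]) \le \Var(\Gamma,[a,\infty))+\Var(\Gamma,[b,\infty))+\Var(\Gamma,[a,b]) = 2\Var(\Gamma,[a,\infty))$, which requires $\Gamma(\infty)=0$; you correctly trace this to the combination of bounded variation (which gives existence of the limit) and the $L^p$ condition (which forces the limit to be zero). That hypothesis is implicit in the lemma via the standing assumptions on the $\gamma_j$, so it is fair to use, but you were right to call it out, since without it the argument would only give a bound of the form $(2\sup_j\|\gamma_j\|_\infty^{I}+\tau^I)$ or similar. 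Second, the submultiplicativity $\Var(gh,(0,\infty)) \le \Var(g,(0,\infty))\,\Var(h,(0,\infty))$ is genuinely a property of BV functions \emph{vanishing at infinity} and fails in general (e.g.\ $g=h=1+\epsilon\sin$ on a bounded interval, extended constantly); your sketch via the Stieltjes product rule and Fubini is sound, and one can also verify it directly by telescoping partition sums and using $(A_0-A')(B_0-B')\ge0$ with $A'=\sum_j|g(x_j)-g(x_{j-1})|\le A_0=\Var(g,[x_0,\infty))$ and similarly for $h$. In short, no gaps; this is the proof.
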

By \eqref{uniformly.b.v.condition}, $\tau < \infty$, and thus by \eqref{theta.prime}, each application of Lemma \ref{exchange.lemma} appends a new $L^p$ factor at a finite cost. 
Applying Lemma \ref{exchange.lemma} repeatedly to \eqref{log.r} yields terms of the form 
\begin{align}
\label{post.exchange.form}
	f(\eta; [\phi_{k_j}]_{j=1}^P; [\phi_{l_j}]_{j=1}^N) \int_{0}^{x} e^{Ki(\eta t + 2\theta(t,\eta))}\prod_{i=1}^{P} e^{-i\phi_{k_i} t} \gamma_{k_i}(t) \prod_{j=1}^{N} e^{i\phi_{l_j} t} \overline{\gamma_{l_j}(t)} dt, 
\end{align}
where $K = P-N \geq 0$. 
Once $I=P+N$ grows to $p$, the $L^p$ condition \eqref{uniformly.L.p.condition} gives a finite $x-$independent upper bound on that term. 

In \eqref{post.exchange.form} we see an important difference between the Schr\"odinger and Dirac settings. 
To ensure self-adjointness, the Schr\"odinger operator's potential is taken to be real-valued. 
Consequently, applications of Lemma \ref{exchange.lemma} yield the same terms as the above as well as terms with the complex conjugate of the product appearing in the integral. 
In the Dirac setting, since $\varphi$ is in general complex-valued, we inherit no such symmetry. 
This motivates the more cumbersome notation for the function $f(\eta)$ above. 
Going forward, the reader should think of the $P$ $\phi_{k_j}$ as the \textit{positive} frequencies and the $N$ $\phi_{l_j}$ as the \textit{negative} frequencies. 
While $P$ and $N$ depend on $I$ and $K$, we will often suppress this dependence for conciseness. 
Note also that the definitions of $I,K,P,$ and $N$ imply $P = \frac{I+K}{2}$ and $N = \frac{I-K}{2}$.

We also see a difference between the setting of orthogonal polynomials on the unit circle and the Dirac setting. 
In the Dirac setting, like in the Schr\"odinger setting, the appearance of $\partial_x \theta(x,\eta)$ leads to an increase or decrease of $K$, the number of $e^{i(\eta x + 2\theta(x,\eta))}$ factors, by one (or also zero, in the Schr\"odinger setting).
In the setting of orthogonal polynomials on the unit circle, on the other hand, the appearance of $\theta(n+1,\eta) - \theta(n,\eta)$ leads instead to a much wider range of $K$ values after the exchange. 
In \cite{L2011}, this is dealt with by passing to Taylor expansions of $e^{2ki(\theta_{n+1} - \theta_n)}$, but here we will be able to work directly with $\partial_x \theta(x,\eta)$. 

We will track the terms \eqref{post.exchange.form} as $I$ increases to $p$. 
Note that such terms appear for any permutation of $[\phi_{k_1}, \ldots, \phi_{k_P}]$ and for any permutation of $[\phi_{l_1}, \ldots, \phi_{l_N}]$, so we can agree to average $f$ over all such terms, by which we mean replacing $f(\eta; [\phi_{k_j}]_{j=1}^P; [\phi_{l_j}]_{j=1}^N)$ by 
\[
	\frac{1}{P!N!} \sum_{\substack{\sigma \in S_P \\ \tau \in S_N}} f(\eta; [\phi_{k_{\sigma(j)}}]_{j=1}^P; [\phi_{l_{\tau(j)}}]_{j=1}^N),
\] 
where $S_j$ denotes the symmetric group on $j$ elements. 
This averaging is useful both for avoiding counting the distinct permutations of the frequencies $\phi_j$ and, importantly, for showing that many apparent singularities arising in $f$ are removable, as we shall see in Section \ref{SectionNonremovableSingularities}. 
The transformed $f(\eta)$ is symmetric in the parameters $[\phi_{k_1}, \ldots, \phi_{k_P}]$ and in the parameters $[\phi_{l_1}, \ldots, \phi_{l_N}]$. 
For any given $I\geq 1$ and $0 \leq K \leq I$ and permutations $\sigma \in S_P$ and $\tau \in S_N$, there are either one or two types of term on which $f_{I,K}$ depends--$f_{I-1,K-1}$ and $f_{I-1,K+1}$--due to Lemma \ref{exchange.lemma} and the introduction via $\frac{\partial\theta}{\partial x}$ of both $e^{\pm i(\eta t + 2\theta)}$. 
The reason $f_{I,K}$ may only depend on one of these is that the term \eqref{post.exchange.form} does not appear if either $K<0$ or $K>I$, and we will define such $f_{I,K}$ to be zero.
The exchange allowed by Lemma \ref{exchange.lemma} motivates the following recursion: if $0 \leq K \leq I$ and as before $P = \frac{I+K}{2}, N = \frac{I-K}{2} \in \Z$, 
\begin{align}
	&f_{1,1}(\eta; [\phi_1]) = 1; \notag \\
	&g_{I,K}(\eta; [\phi_{k_j}]_{j=1}^P; [\phi_{l_j}]_{j=1}^N) = \frac{iK}{\sum_{j=1}^{P}\phi_{k_j} - \sum_{j=1}^{N}\phi_{l_j} - K\eta} f_{I,K}(\eta; [\phi_{k_j}]_{j=1}^P; [\phi_{l_j}]_{j=1}^N); \label{g.I.K} \\
	&f_{I,K}(\eta; [\phi_{k_j}]_{j=1}^P; [\phi_{l_j}]_{j=1}^N) = \frac{1}{P!N!} \sum_{a=-1}^{1} \sum_{\substack{\sigma \in S_P \\ \tau \in S_N}} \omega_a g_{I-1, K + a}(\eta; [\phi_{k_{\sigma(j)}}]_{j=1}^{\min[P, P+ a]}; [\phi_{l_{\tau(j)}}]_{j=1}^{\min[N, N-a]}), \label{f.I.K}
\end{align}
where we shall think of $\omega_a$ as a function of $1 + 1 + 0$ variables if $a=-1$ or of $1 + 0 + 1$ variables if $a=1$, in either case defined as
\[
	\omega_a = \omega_a(\eta; [\phi_{k_j}]_{j=1}^{\max [0,-a]}; [\phi_{l_j}]_{j=1}^{\max [0,a]}) = \delta_{a+1}-\delta_{a-1},
\]
where $\delta_j$ is one if $j=0$ and zero otherwise. 
By convention, we define $f_{I,K}$ and $g_{I,K}$ to be zero whenever $K >I$, $I-K \notin 2\Z$, or either $I<1$ or $K<0$, regardless of the number of frequencies on which they are made to depend. 			

Notation can be simplified using the following symmetric product: given $\mathfrak{f},$ a function of \mbox{$1 + P_1 + N_1$} variables, and $\mathfrak{g},$ a function of \mbox{$1 + P_2 + N_2$} variables, their symmetric product, $\mathfrak{f} \odot \mathfrak{g}$, is defined as
\begin{align*}
&(\mathfrak{f} \odot \mathfrak{g})(\eta; [\phi_{k_j}]_{j=1}^{P_1+P_2}; [\phi_{l_j}]_{j=1}^{N_1+N_2}) \\
&\hspace{.4in}= \frac{1}{(P_1+P_2)!(N_1+N_2)!} \sum_{\substack{\sigma \in S_{P_1+P_2} \\ \tau \in S_{N_1+N_2}}} \mathfrak{f}(\eta; [\phi_{k_{\sigma(j)}}]_{j=1}^{P_1}; [\phi_{l_{\tau(j)}}]_{j=1}^{N_1})\mathfrak{g}(\eta; [\phi_{k_{\sigma(j)}}]_{j=P_1+1}^{P_1+P_2}; [\phi_{l_{\tau(j)}}]_{j=N_1+1}^{N_1+N_2}).
\end{align*}
It is straightforward to check that $\odot$ is commutative and associative. 
We also define for $0 \leq K \leq I$
\[
	\Xi_{I,K}(\eta; [\phi_{k_j}]_{j=1}^P; [\phi_{l_j}]_{j=1}^N) := \delta_{I-1}\delta_{K-1}.
\]
With this notation, we can abbreviate the definition \eqref{f.I.K} as
\begin{align}
	f_{I,K}(\eta; [\phi_{k_j}]_{j=1}^P; [\phi_{l_j}]_{j=1}^N) &= \Xi_{I,K} + \sum_{a=-1}^{1} \omega_a \odot g_{I-1,K + a}. \label{pretty.f} 
\end{align} 
In each application of Lemma \ref{exchange.lemma}, the introduction of $\frac{\partial \theta}{\partial x} = \frac{i}{2}( e^{i(\eta x + \eta)} \varphi - \overline{e^{i(\eta x + \eta)} \varphi})$ yields two kinds of terms of the form \eqref{post.exchange.form}, one kind for each summand in $\frac{\partial\theta}{\partial x}$. 
In each kind, the value $K$, which is the number of $e^{i(\eta t + 2\theta)}$ factors in the integrand, has either increased or decreased by one, depending on which summand was attached. 
With enough decreases in $K$, the new $K$ may shrink to zero. 
Lemma \ref{exchange.lemma} may still be applied so long as the sum $\sum_{j=1}^{P} \phi_{k_j} - \phi_{l_j}$ is nonzero. 
When both $K=0$ and $\sum_{j=1}^{P} \phi_{k_j} - \phi_{l_j}=0$, however, Lemma \ref{exchange.lemma} cannot be applied, and we depend instead on the following cancellation:

\begin{lemma}
\label{zero.K.cancellation.lemma}
	For any $p \in \Z$ and $I \in 2\Z$, $f_{I,2p}$ is purely imaginary. 
	Moreover, if $I = 2P$ and \mbox{$\sum_{j=1}^{P} \phi_{k_j} - \sum_{j=1}^{P} \phi_{l_j} = 0$}, then
	\[
		f_{I,0}(\eta; [\phi_{k_j}]_{j=1}^P; [\phi_{l_j}]_{j=1}^P) = f_{I,0}(\eta; [\phi_{l_j}]_{j=1}^P; [\phi_{k_j}]_{j=1}^P).
	\]	  
\end{lemma}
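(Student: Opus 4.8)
The plan is to prove both assertions by induction on $I$, reading everything off the recursion \eqref{g.I.K}--\eqref{pretty.f}. Three structural facts carry the argument: (i) $\omega_{-1}=1$, $\omega_0=0$, $\omega_1=-1$, all real; (ii) the multiplier $\frac{iK}{\sum_j\phi_{k_j}-\sum_j\phi_{l_j}-K\eta}$ relating $f_{I,K}$ and $g_{I,K}$ in \eqref{g.I.K} has real denominator, so $g_{I,K}$ is $i$ times a real multiple of $f_{I,K}$ (and $g_{I,0}\equiv 0$ because of the factor $K$); (iii) $\Xi_{I,K}$ is real and vanishes unless $(I,K)=(1,1)$, where it equals $f_{1,1}=1$.

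For the first assertion I prove the sharper, self-propagating statement that $f_{I,K}$ is real for odd $I$ and purely imaginary for even $I$. The base case is $f_{1,1}=1$. For the step, assume this holds at level $I-1$; by (ii) each $g_{I-1,K'}$ is then purely imaginary if $I-1$ is odd and real if $I-1$ is even (the value $K'=0$ giving $g_{I-1,0}=0$, which is harmless). Since $\omega_a$ is a real constant, $\omega_a\odot g_{I-1,K'}$ lies in the same class as $g_{I-1,K'}$; adding the real term $\Xi_{I,K}$ preserves the class; and as $I$ and $I-1$ have opposite parity the alternation propagates. Specializing to even $I$ and $K=2p$ gives the first statement of the lemma.

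For the second assertion, fix $I=2P\geq 2$. In \eqref{pretty.f} with $K=0$ the terms $\Xi_{2P,0}$, $\omega_{-1}\odot g_{2P-1,-1}$, and $\omega_0\odot g_{2P-1,0}$ all vanish, so $f_{2P,0}=\omega_1\odot g_{2P-1,1}$. Expanding the symmetric product and using that $g_{2P-1,1}$ is already symmetric in its $P$ positive and $P-1$ negative frequency slots, one obtains
\[
	f_{2P,0}\bigl(\eta;[\phi_{k_j}]_{j=1}^P;[\phi_{l_j}]_{j=1}^P\bigr)=-\frac{1}{P}\sum_{m=1}^{P}g_{2P-1,1}\bigl(\eta;[\phi_{k_j}]_{j=1}^P;[\phi_{l_j}]_{j\neq m}\bigr),
\]
where $[\phi_{l_j}]_{j\neq m}$ is the $(P-1)$-tuple with $\phi_{l_m}$ omitted. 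By \eqref{g.I.K} the $m$-th summand is $\dfrac{i\,f_{2P-1,1}(\eta;[\phi_{k_j}]_{j=1}^P;[\phi_{l_j}]_{j\neq m})}{\sum_{j=1}^P\phi_{k_j}-\sum_{j\neq m}\phi_{l_j}-\eta}$, and on the locus $L=\{\sum_{j=1}^P\phi_{k_j}=\sum_{j=1}^P\phi_{l_j}\}$ that denominator collapses to $\phi_{l_m}-\eta$, so on $L$
\[
	f_{2P,0}\bigl(\eta;[\phi_{k_j}];[\phi_{l_j}]\bigr)=-\frac{i}{P}\sum_{m=1}^{P}\frac{f_{2P-1,1}\bigl(\eta;[\phi_{k_j}]_{j=1}^P;[\phi_{l_j}]_{j\neq m}\bigr)}{\phi_{l_m}-\eta}.
\]
The second statement of the lemma is thus equivalent to invariance of this right-hand side under $[\phi_{k_j}]\leftrightarrow[\phi_{l_j}]$ on $L$; since $f_{2P-1,1}$ is real by the first assertion, together the two give precisely what makes the paired $K=0$, zero-net-frequency ``stuck'' terms (those of \eqref{form.of.singularities}-type with $m=1$ impossible to reduce by Lemma \ref{exchange.lemma}) cancel upon taking the real part in \eqref{log.r}.

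Establishing that interchange symmetry is the crux. It genuinely uses $L$: off $L$ the right-hand side above is not symmetric (a denominator $\sum_j\phi_{k_j}-2\eta$ appears after one further unwinding of $f_{2P-1,1}$ and is not interchange-invariant), so it cannot come from a symmetry of the bare recursion. My plan is to strengthen the induction, carrying, beside the parity statement, a family of identities asserting that for each odd $I$ the weighted sums obtained by iterating \eqref{pretty.f} become interchange-symmetric once the small divisors they contain are replaced by their values on $L$. The mechanism is that each application of \eqref{pretty.f} peels exactly one frequency into an $\omega_{\pm1}$-slot and shifts the divisor by that frequency, so on $L$ the divisors collapse in stages ($\sum_j\phi_{k_j}-\sum_{j\neq m}\phi_{l_j}-\eta=\phi_{l_m}-\eta$, then $\sum_j\phi_{k_j}-\sum_{j\notin\{m,m'\}}\phi_{l_j}-2\eta=\phi_{l_m}+\phi_{l_{m'}}-2\eta$, and so on). Bookkeeping these collapses and checking that the resulting partial-fraction sums reassemble into something manifestly symmetric in $[\phi_{k_j}]\leftrightarrow[\phi_{l_j}]$ is the main obstacle. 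As an independent check and alternative route, the same identity can be read off by expanding $\partial_x\log r=\Re\bigl(e^{i(\eta x+2\theta)}\varphi\bigr)=\Re\bigl(\overline{e^{i(\eta x+2\theta)}\varphi}\bigr)$ in the two available ways via Lemma \ref{exchange.lemma} and matching the non-oscillatory integrals $\int_0^x\prod_i e^{-i\phi_{k_i}t}\gamma_{k_i}(t)\prod_j e^{i\phi_{l_j}t}\overline{\gamma_{l_j}(t)}\,dt$ with $\sum_i\phi_{k_i}=\sum_j\phi_{l_j}$, each of which is sent to its $[\phi_{k_j}]\leftrightarrow[\phi_{l_j}]$ partner by complex conjugation.
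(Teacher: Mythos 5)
Your first assertion---that $f_{I,K}$ is real for odd $I$ and purely imaginary for even $I$, proved by a one-step induction through \eqref{pretty.f} and \eqref{g.I.K} using that $\omega_{\pm1}$ and $\Xi_{I,K}$ are real and the $g$'s carry an explicit factor of $i$---is correct and is essentially the paper's argument, stated as a slightly cleaner invariant. The paper proves only $\Re f_{I,2p}=0$ for even $I$ by expanding two levels of the recursion so that an $i^2$ appears; your alternating-parity statement closes the induction in one level and is a mild simplification.

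The second assertion is not proved in your proposal. You correctly unwind $f_{2P,0}=\omega_1\odot g_{2P-1,1}$, correctly expand the symmetric product, and correctly collapse the top-level divisor to $\phi_{l_m}-\eta$ on $L$, thereby reducing the claim to a $[\phi_{k_j}]\leftrightarrow[\phi_{l_j}]$ symmetry of the resulting partial-fraction sum. But you then describe \emph{plans} (``strengthen the induction'', ``bookkeeping these collapses'') and an unexecuted ``alternative route'' and explicitly flag the bookkeeping as ``the main obstacle''---that obstacle is precisely where the content of the lemma lives, and leaving it open leaves the lemma unproved. The issue is real: the divisors produced at depth $d$ in the recursion are of the form $\sum_{j\in A}\phi_{k_j}-\sum_{j\in B}\phi_{l_j}-s\,\eta$ for various subsets and various $s\ge 1$, and under the interchange $[\phi_k]\leftrightarrow[\phi_l]$ a term with divisor data $(A,B,s)$ is sent to one with $(B,A,s)$; matching these up term by term requires a bijection on the recursion tree, not just collapsing the outermost divisor. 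The paper supplies exactly such a bijection by first proving the closed form
\[
  f_{I,0}=\frac{1}{n!n!}\sum_{\sigma,\tau\in S_n}\ \sum_{s\in\mathcal{A}(I)} H_{I,s,\sigma,\tau},\qquad I=2n,
\]
with $\mathcal{A}(I)$ the set of nonnegative lattice paths $(s_0,\dots,s_I)$ of unit steps with $s_0=s_I=0$ and $s_i\ge1$ for $1\le i\le I-1$, and then observing that path reversal $\tilde s_i=s_{I-i}$ is an involution of $\mathcal{A}(I)$ that turns $H_{I,s,\sigma,\tau}(\eta;[\phi_k];[\phi_l])$ into $H_{I,\tilde s,\tau,\sigma}(\eta;[\phi_l];[\phi_k])$. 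If you want to salvage your approach, the missing step is to produce this explicit symmetric representation (or an equivalent involution on the recursion tree); asserting that ``the partial-fraction sums reassemble into something manifestly symmetric'' is a restatement of the goal, not an argument.
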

\begin{proof}
	First we prove that $\Re f_{I,2p} = 0$ for all $p \in \Z$ by induction on $I = 2P$. 
	The case $P\leq 0$ holds trivially. 
	Suppose $\Re f_{2k,2p} = 0$ for all $p \in\Z$ and for all $k < P$. 
	By expanding $f_{I,2n}$ using \eqref{pretty.f} and \eqref{g.I.K}, we obtain
	\begin{align*}
		f_{I,2n} &= \sum_{a=-1}^{1} \omega_a \odot \frac{(2n+a)i}{\sum_{j=1}^{q_1} \phi_{k_j} - \sum_{j=1}^{r_1}\phi_{l_j} - (2n+a)\eta} \\
		&\hspace{1in}\times\big(\Xi_{I-1,2n+a} + \sum_{b=-1}^{1} \omega_b \odot \frac{(2n+a+b)i}{\sum_{j=1}^{q_2}\phi_{k_j} - \sum_{j=1}^{r_2}\phi_{l_j} - (2n+a+b)\eta} f_{I-2,2n+a+b}\big).
	\end{align*}
	In taking the real part, the term with the $\Xi$ factor vanishes due to the $i$ in the first quotient, and the other term vanishes due to the $i^2$ together with the induction hypothesis. 

	For the second part, we first prove by induction on $n$ that for $I = 2n$, $n \geq 1$,
	\begin{equation}
		f_{I,0} = \frac{1}{n!n!}\sum_{\sigma, \tau \in S_n} \sum_{s \in \mathcal{A}(I)} H_{I,s,\sigma, \tau}, \label{f.I.zero.via.H}
	\end{equation}
	where $\mathcal{A}(I)$ is the set of $I+1$-tuples $(s_0,s_1, \ldots, s_I)$ with integer components such that $|s_{i+1}-s_i|=1$, $s_i \geq 1$ for $1 \leq i \leq I-1$, and $s_0 = s_I = 0$, and 
	\begin{equation*}
		H_{I,s,\sigma, \tau}(\eta; [\phi_{k_j}]_{j=1}^{I/2}; [\phi_{l_j}]_{j=1}^{I/2}) = \prod_{m=1}^{I-1} \frac{i(s_m-s_{m-1})s_m}{\sum_{j=1}^{\frac{m+s_m}{2}} \phi_{k_{\sigma(j)}} - \sum_{j=1}^{\frac{m-s_m}{2}}\phi_{l_{\tau(j)}} - s_m \eta}. 
	\end{equation*}
	We obtained $f_{I,0}$ by averaging over permutations before each application of Lemma \ref{exchange.lemma}. 
	The terms $H_{I,s, \sigma, \tau}$ are obtained by applying Lemma \ref{exchange.lemma} $I-1$ times without averaging first. 
	We then return to $f_{I,0}$ by averaging over permutations. 
	We also have that
	\begin{align*}
		H_{I,s,\sigma, \tau}(\eta; [\phi_{k_j}]_{j=1}^{I/2}; [\phi_{l_j}]_{j=1}^{I/2}) = H_{I,\tilde{s}, \tau, \sigma}(\eta; [\phi_{l_j}]_{j=1}^{I/2}; [\phi_{k_j}]_{j=1}^{I/2}),
	\end{align*}
	where $\tilde{s}_i = s_{I-i}$. 
	Clearly, $s \in \mathcal{A}(I)$ if and only if $\tilde{s} \in \mathcal{A}(I)$, so summing over $\mathcal{A}(I)$ and averaging in permutations $\sigma, \tau \in S_n$ completes the proof. 
\end{proof}

With Lemma \ref{zero.K.cancellation.lemma}, we see that we may apply Lemma \ref{exchange.lemma} `to completion,' that is, until all terms remaining either have $p$ factors $\beta_j$, are bounded by $2\tau^K$ for some finite $K$, or else are purely imaginary and do not contribute to the Pr\"ufer amplitude. 
Though $\varphi$ not of finite type will yield infinitely many such terms, later we will see that \eqref{alpha.type.decay.condition} is sufficient to control the relevant infinite sums. 
Now, applying Lemma \ref{exchange.lemma} to completion introduces many of the $g_{I,K}$, each of which appears to introduce a singularity in $\eta$ at \mbox{$\frac 1K(\sum_{j=1}^{P}\phi_{k_j} - \sum_{j=1}^{N} \phi_{l_j})$}. 
In fact, such singularities for $K > 1$ are removable, due to the following lemma:

\begin{lemma}
\label{f.g.reduction.lemma}
If $0 < K \leq I$ and $0 < k < K$, then
\begin{align}
	f_{I,K} &= \sum_{i=0}^{I} f_{i,k} \odot g_{I-i,K-k}, \label{f.recursion}\\
	g_{I,K} &= \sum_{i=0}^{I} g_{i,k} \odot g_{I-i, K-k}. \label{g.recursion}
\end{align}
\end{lemma}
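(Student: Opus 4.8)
The plan is to prove \eqref{f.recursion} and \eqref{g.recursion} together by induction on $I$, with the substance lying in \eqref{f.recursion}. First I would record that, granted \eqref{f.recursion} at $(I,K)$ for the two levels $k$ and $K-k$, the identity \eqref{g.recursion} at $(I,K,k)$ is pure algebra. Write $g_{I,K} = \frac{iK}{D}f_{I,K}$ with $D := \sum_{j=1}^P\phi_{k_j} - \sum_{j=1}^N\phi_{l_j} - K\eta$ (here $i^2=-1$), and observe that $D$ splits additively, $D = D_A + D_B$, across the two frequency blocks $A,B$ of a term $f_{i,k}\odot g_{I-i,K-k}$ carrying net levels $k$ and $K-k$, where $D_A,D_B$ are precisely the small divisors of $g_{i,k}$ and $g_{I-i,K-k}$ on those blocks. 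Then each summand of $\frac{iK}{D}(f_{i,k}\odot g_{I-i,K-k}) - g_{i,k}\odot g_{I-i,K-k}$ equals $i\big(\frac{K}{D}-\frac{k}{D_A}\big)f_{i,k}(A)g_{I-i,K-k}(B)$, and using $\frac{K}{D_A+D_B}-\frac{k}{D_A}=\frac{(K-k)D_A-kD_B}{(D_A+D_B)D_A}$ together with $g_{I-i,K-k}(B)=\frac{i(K-k)}{D_B}f_{I-i,K-k}(B)$ this becomes $-\frac{K-k}{D}\big(\frac{K-k}{D_B}-\frac{k}{D_A}\big)f_{i,k}(A)f_{I-i,K-k}(B)$; since $\frac{K-k}{D_B}f_{I-i,K-k}(B)=\frac1i g_{I-i,K-k}(B)$ and $\frac{k}{D_A}f_{i,k}(A)=\frac1i g_{i,k}(A)$, summing over the split index and the symmetrizing permutations gives $-\frac{K-k}{D}\big[\frac1i\sum_i f_{i,k}\odot g_{I-i,K-k}-\frac1i\sum_i g_{i,k}\odot f_{I-i,K-k}\big]$, and by \eqref{f.recursion} at levels $k$ and $K-k$ (plus commutativity of $\odot$, which turns the second sum into the level-$(K-k)$ decomposition after reindexing) both brackets equal $f_{I,K}$, so the whole difference vanishes.

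For \eqref{f.recursion} at $(I,K,k)$ I would expand the left side by \eqref{pretty.f}: since $K\ge 2$ the term $\Xi_{I,K}$ drops and $f_{I,K} = \omega_{-1}\odot g_{I-1,K-1} + \omega_1\odot g_{I-1,K+1}$. When $1 < k < K-1$, apply the inductive case of \eqref{g.recursion} (at index $I-1$) to split each of $g_{I-1,K\pm1}$ at level $k$, use associativity and commutativity of $\odot$ to pull the common factor $g_{j,k}$ out of the two resulting sums, and note that the remaining bracket is exactly $\omega_{-1}\odot g_{I-1-j,(K-k)-1} + \omega_1\odot g_{I-1-j,(K-k)+1} = f_{I-j,K-k} - \Xi_{I-j,K-k}$ by \eqref{pretty.f}, with $\Xi_{I-j,K-k}=0$ since $K-k>1$; this yields $f_{I,K} = \sum_j g_{j,k}\odot f_{I-j,K-k} = \sum_i f_{i,K-k}\odot g_{I-i,k}$, i.e.\ \eqref{f.recursion} at level $K-k$. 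Letting $k$ run over $(1,K-1)$ covers all levels in $\{2,\dots,K-2\}$; the extreme levels $1$ and $K-1$ come from the same computation carried out with $k=K-1$ or $k=1$, where one of $g_{I-1,K\pm1}$ now has net level $1$ and cannot be split further but instead reappears, either un-split as an extreme ($i=1$) term of the target sum or split at level $1$ and reabsorbed through the explicit shape $f_{1,1}=\omega_{-1}$ and $f_{m,1}=\omega_1\odot g_{m-1,2}$ for odd $m>1$; the base $I=2$ (only $K=2$, $k=1$) and the sub-case $K=2$ are checked directly.

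The difficulty I anticipate is almost entirely organizational: keeping the symmetric-product accounting honest — which block of frequencies is fed to which factor, the multiplicities with which a given frequency monomial is produced under the symmetrizing permutations, and the exact matching of index shifts after each $\odot$-rearrangement — together with the quiet vanishing of out-of-range or level-$0$ factors under the stated conventions in the small $I,K$ and extreme-$k$ cases. Conceptually there is only one mechanism at work beyond the recursions and the algebra of $\odot$, namely the additivity $D = D_A + D_B$ of the small divisors and the ensuing partial-fraction cancellation; this is precisely why the apparent pole of $g_{I,K}$ at $\eta = \frac1K\big(\sum\phi_{k_j} - \sum\phi_{l_j}\big)$ is removable for $K\ge 2$, and it is the same cancellation responsible for the $f_{I,0}$ computation in Lemma \ref{zero.K.cancellation.lemma}.
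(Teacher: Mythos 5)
Your plan is sound and hinges on the same ideas as the paper's proof: a joint induction on $I$, the expansion \eqref{pretty.f} of $f$ in terms of $g$ for \eqref{f.recursion}, and the additivity of the small divisor across the two blocks of an $\odot$-product for \eqref{g.recursion}. The one genuine organizational divergence is in \eqref{f.recursion}: you expand the \emph{left}-hand side $f_{I,K}$ and split the resulting $g_{I-1,K\pm1}$ at level $k$, producing the decomposition at the complementary level $K-k$; the paper instead expands each $f_{i,k}$ in the \emph{right}-hand sum $\sum_i f_{i,k}\odot g_{I-i,K-k}$, reindexes, and applies \eqref{g.recursion} at $I-1$ to collapse the inner sums. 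The two directions are dual, but the paper's is tidier: it treats every $0<k<K$ in one pass with a single explicit exception ($k=1,\,a=-1$, handled via $\omega_{-1}=\Xi_{1,1}$), whereas your direction forces you to track which target level $k'=K-k$ is hit and to peel off the extreme levels $k'=1$ and $K=2$ by separate inspection using $f_{1,1}=\omega_{-1}$ and $f_{m,1}=\omega_1\odot g_{m-1,2}$. (Small bookkeeping slip: your splittability constraint is $0<k<K-1$ from $g_{I-1,K-1}$ alone, so $k=1$ already works when $K>2$ and you needn't exclude it; only the target $k'=1$, i.e.\ $k=K-1$, and the base $K=2$ genuinely need the extra handling you sketch.) For \eqref{g.recursion} your partial-fraction computation of the difference $\frac{iK}{D}(f_{i,k}\odot g_{I-i,K-k})-g_{i,k}\odot g_{I-i,K-k}$ is correct and is the same mechanism the paper packages as the per-permutation scalar identity $\frac{Kf_{I,K}}{g_{I,K}}=\frac{kf_{i,k}}{g_{i,k}}+\frac{(K-k)f_{I-i,K-k}}{g_{I-i,K-k}}$, followed by clearing denominators, averaging, and summing in $i$.
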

\begin{proof}
	The proof is similar to that of \cite[Lemma 5.1 (i)]{L2013SlowWvN}. 
	We prove \eqref{f.recursion} and \eqref{g.recursion} simultaneously by induction on $I$. 
	Both statements hold vacuously when $I< 2$ or $K <2$, so we assume $2 \leq K \leq I$. 
	Suppose both statements hold for all $\tilde{I} < I$. 
	Then using \eqref{pretty.f} and associativity of $\odot$ yields
	\begin{align*}
		\sum_{i=0}^{I} f_{i,k} \odot g_{I-i,K-k} &= \sum_{i=0}^{I} (\Xi_{i,k} + \sum_{a=-1}^{1} \omega_a \odot g_{i-1,k+a}) \odot g_{I-i, K-k} \\
		&= \sum_{i=0}^{I} \Xi_{i,k} \odot g_{I-i,K-k} + \sum_{a=-1}^{1} \omega_a \odot (\sum_{i=0}^{I}g_{i-1,k+a} \odot g_{I-i,K-k}).
	\end{align*}
	Since by our convention $g_{-1,k+a}=0$ for any $k,a$, we may reindex in $i$ with no cost as
	\[
		\delta_{k-1}\Xi_{1,1} \odot g_{I-1,K-1} + \sum_{a=-1}^{1} \omega_a \odot (\sum_{i=0}^{I-1} g_{i,k+a} \odot g_{I-1-i,K-k}).
	\]	
	At this point, we may apply the induction hypothesis on the inner sum both when $a=-1$ and $a=1$ so long as $0 < k+a< K+a$ and $0 < K+a \leq I$. 
	If $K+a > I$, then for each $i$, either $g_{i,k+1}$ or $g_{I-1-i,K-k}$ is zero. 
	Since $g_{I-1,K+a}$ is also zero, we may include this term at no cost. 
	That $0 < k< K$ implies $k+1<K+a$, but we do have $k+a \leq 0$ exactly when $k=1$ and $a=-1$. 
	This is the only exception we must make to the induction hypothesis, and we're left with
	\begin{align*}
		\sum_{i=0}^{I} f_{i,k} \odot g_{I-i,K-k} &= \delta_{k-1}\Xi_{1,1} \odot g_{I-1,K-1} + \sum_{a=-1}^{1} \omega_a \odot (g_{I-1,K+a} - \delta_{k-1}\delta_{a+1}g_{I-1,K-1}) \\
		&= f_{I,K} - \Xi_{I,K} + \delta_{k-1} (\Xi_{1,1} \odot g_{I-1,K-1} - \omega_{-1} \odot g_{I-1,K-1}) = f_{I,K},
	\end{align*}
	where we've used $\Xi_{I,K}=0$ (since $I >1$) and $\omega_{-1} = \Xi_{1,1}$. 

	It remains to show \eqref{g.recursion} at $I$. 
	As before, let $I = P + N$ and $K = P - N$. 
	Now also set $i = P_{i,k} + N_{i,k}$ and $k = P_{i,k} - N_{i,k}$ for any $1 \leq i \leq I$ and $0 < k < K$. 
	By \eqref{g.I.K}, for any permutations $\sigma \in S_P$ and $\tau \in S_N$ and for any $1 \leq i \leq I$ and $0 < k < K$, we have
	\begin{align*}
		\frac{K f_{I,K}(\eta; [\phi_{k_{\sigma(j)}}]_{j=1}^P; [\phi_{l_{\tau(j)}}]_{j=1}^N)}{g_{I,K}(\eta; [\phi_{k_{\sigma(j)}}]_{j=1}^P; [\phi_{l_{\tau(j)}}]_{j=1}^N)} &= \frac{k f_{i,k}(\eta; [\phi_{k_{\sigma(j)}}]_{j=1}^{P_{i,k}}; [\phi_{l_{\tau(j)}}]_{j=1}^{N_{i,k}})}{g_{i,k}(\eta; [\phi_{k_{\sigma(j)}}]_{j=1}^{P_{i,k}}; [\phi_{l_{\tau(j)}}]_{j=1}^{N_{i,k}})} \\
		&\hspace{1in}+ \frac{(K-k) f_{I-i,K-k}(\eta; [\phi_{k_{\sigma(j)}}]_{j=P_{i,k}+1}^P; [\phi_{l_{\tau(j)}}]_{j=N_{i,k}+1}^N)}{g_{I-i,K-k}(\eta; [\phi_{k_{\sigma(j)}}]_{j=P_{i,k}+1}^P; [\phi_{l_{\tau(j)}}]_{j=N_{i,k}+1}^N)}.
	\end{align*}
	Clearing denominators here and averaging in permutations $(\sigma,\tau) \in S_P \times S_N$, notice that each of $f_{I,K}$ and $g_{I,K}$ is symmetric with respect to $(\sigma,\tau)$, so that we're left with 
	\begin{align*}
		\frac{Kf_{I,K}}{g_{I,K}} g_{i,k} \odot g_{I-i,K-k} &= \frac{1}{P!N!} \sum_{\substack{\sigma \in S_P \\ \tau \in S_N}} \big(kf_{i,k}(\eta; [\phi_{k_{\sigma(j)}}]_{j=1}^{P_{i,k}}; [\phi_{l_{\tau(j)}}]_{j=1}^{N_{i,k}}) g_{I-i,K-k}(\eta; [\phi_{k_{\sigma(j)}}]_{j=P_{i,k}+1}^P; [\phi_{l_{\tau(j)}}]_{j=N_{i,k}+1}^N) \\
		&\hspace{.5in}+ (K-k)f_{I-i,K-k}(\eta; [\phi_{k_{\sigma(j)}}]_{j=P_{i,k}+1}^P; [\phi_{l_{\tau(j)}}]_{j=N_{i,k}+1}^N) g_{i,k}(\eta; [\phi_{k_{\sigma(j)}}]_{j=1}^{P_{i,k}}; [\phi_{l_{\tau(j)}}]_{j=1}^{N_{i,k}})\big) \\
		&=kf_{i,k} \odot g_{I-i,K-k}  + (K-k) f_{I-i,K-k} \odot g_{i,k}.
	\end{align*}
	Summing both sides in $0 \leq i \leq I$ we have
	\begin{align*}
		\frac{Kf_{I,K}}{g_{I,K}} \sum_{i=0}^{I} g_{i,k} \odot g_{I-i,K-k} &= k\sum_{i=0}^{I} f_{i,k} \odot g_{I-i,K-k} + (K-k)\sum_{i=0}^{I} f_{I-i,K-k} \odot g_{i,k} \\
		% &= kf_{I,K} + (K-k) \sum_{i=0}^{I} f_{i,K-k} \odot g_{I-i,k} \\
		&= kf_{I,K} + (K-k) \sum_{i=0}^{I} f_{i,l} \odot g_{I-i,K-l} = Kf_{I,K},
	\end{align*}
	where $l := K-k$ and we've used \eqref{f.recursion}, since $0 < l < K$ also. Dividing both sides by $\frac{Kf_{I,K}}{g_{I,K}}$ completes the proof.
\end{proof}

Lemma \ref{f.g.reduction.lemma} allows us to reduce $f_{I,K}$ and $g_{I,K}$ to sums of products of only the $f_{i,1}$ and $g_{i,1}$. 
We may use this reduction and a study of singularities of $g_{i,1}$ to define the correct exceptional set $S_p$. 
Note that $I-K\notin 2\Z$ implies $g_{I,K}=0$ by our convention and so contributes no singularities. 

\begin{lemma}
\label{nonremovable.singularities.g} \
	\begin{enumerate}
		\item
		For $I = 2n-1$, $n \geq 1$, if $\zeta$ is a nonremovable singularity of $g_{I,1}(\eta; [\phi_{k_j}]_{j=1}^n;[\phi_{l_j}]_{j=1}^{n-1})$, then $\zeta$ may be written in the form \eqref{form.of.singularities} for $1 \leq m \leq n$. 
		\item 
		For $1 < K \leq I$, $I - K \in 2\Z$, if $\zeta$ is a nonremovable singularity of $g_{I,K}$, then $\zeta$ may be written in the form \eqref{form.of.singularities} for $1 \leq m \leq n$, where $n = \lfloor I/2 \rfloor$.
	\end{enumerate}
\end{lemma}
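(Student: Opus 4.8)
The plan is to prove part (i) by strong induction on the odd index $I = 2n-1$, tracking how a singularity of $g_{I,1}$ can only arise from the recursion \eqref{g.I.K}, \eqref{f.I.K}, and then deduce part (ii) from part (i) by means of the reduction Lemma \ref{f.g.reduction.lemma}. First I would observe that $g_{I,1} = \frac{i}{\sum_{j=1}^P \phi_{k_j} - \sum_{j=1}^N \phi_{l_j} - \eta} f_{I,1}$, so $g_{I,1}$ has a candidate pole only at $\zeta = \sum_{j=1}^{P}\phi_{k_j} - \sum_{j=1}^{N}\phi_{l_j}$ with $P = n$, $N = n-1$ — which is exactly the form \eqref{form.of.singularities} with $m = n$ — together with whatever singularities $f_{I,1}$ itself carries. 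For the latter, expand $f_{I,1} = \Xi_{I,1} + \sum_{a=-1}^{1}\omega_a \odot g_{I-1,1+a}$; the $\Xi$ term is a constant, and since $I - 1$ is even, $g_{I-1, K}$ is nonzero only for $K = 0$ or $K = 2$. The $K = 0$ contribution $g_{I-1,0}$ is a scalar multiple of $f_{I-1,0}$ with a factor of $K = 0$ in front (see \eqref{g.I.K}), hence vanishes identically and contributes nothing; so the only surviving piece is $\omega_1 \odot g_{I-1,2}$. Thus the singularities of $f_{I,1}$ are contained in those of $g_{I-1,2}$, and it suffices to understand $g_{I',2}$ for $I' = 2n-2$.

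Next I would handle $g_{I,2}$ (and more generally $g_{I,K}$ for $K \geq 2$, which simultaneously sets up part (ii)) via \eqref{g.recursion} of Lemma \ref{f.g.reduction.lemma} with $k = 1$: $g_{I,K} = \sum_{i=0}^{I} g_{i,1} \odot g_{I-i, K-1}$. Iterating this down to $K = 1$ expresses $g_{I,K}$ as an $\odot$-combination of products of exactly $K$ factors of the form $g_{I_1,1}, \ldots, g_{I_K,1}$ with $I_1 + \cdots + I_K = I$, each $I_t$ odd, so each $I_t = 2n_t - 1$ with $\sum n_t = \lfloor I/2\rfloor + $ (a correction: actually $\sum (2n_t-1) = I$ forces $\sum n_t = (I+K)/2$, and since $K \le I$ and $I - K$ even, $\lfloor I/2\rfloor \ge \sum n_t - K/2$; I will pin down the exact arithmetic in the write-up). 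By the inductive hypothesis each $g_{I_t,1}$ has nonremovable singularities only of the form $\sum \phi_{k} - \sum \phi_{l}$ with one more $\phi_k$ than $\phi_l$, using $n_t$ of the positive frequencies and $n_t - 1$ of the negatives. A singularity of a product $\prod_t g_{I_t,1}$ is a singularity of one of the factors (the others being generically finite there, and the $\odot$-symmetrization over $S_P \times S_N$ only permutes which frequencies play which role, preserving the "one more positive than negative" shape); collecting the single singular factor's $\zeta = \sum_{j}\phi_{k_j} - \sum_j \phi_{l_j}$ gives a sum over some $m' \le n_t \le n$ positive frequencies and $m'-1$ negative ones, again of the form \eqref{form.of.singularities} with $1 \le m' \le n$. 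This closes the induction and yields both (i) and (ii).

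The main obstacle I anticipate is the bookkeeping needed to show that a pole of the product $\prod_t g_{I_t,1}$ is genuinely a pole of a single factor and not an artifact: a priori, a pole of one factor could be cancelled by a zero of $f_{I,K}/g_{I,K}$-type prefactors, or — more seriously — a spurious pole at some $\zeta$ of the wrong form could appear and then be removable, so I must argue that the set of \emph{nonremovable} singularities behaves well under $\odot$ and under the recursion. The cleanest way I see to do this is to argue at the level of the recursion itself rather than the closed product formula: prove by induction that $g_{I,1}$, as a rational function of $\eta$, can be written with denominator a product of linear factors $(\sum\phi_k - \sum\phi_l - \eta)$ all of the form \eqref{form.of.singularities}, so that after cancellation every surviving pole is of that form. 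The averaging in \eqref{f.I.K} and the vanishing of the $a = 0$ / $K = 0$ branch (both already exploited in Lemma \ref{zero.K.cancellation.lemma}) are exactly what guarantee no denominator factor of the "too many negative frequencies" type is ever introduced; making that structural claim precise — ideally by carrying an explicit inductive description of the admissible denominators — is where the real work lies, and it mirrors the analysis in \cite{L2011,L2013SlowWvN} adapted to the signed ($K$ can only step by $\pm 1$) Dirac setting.
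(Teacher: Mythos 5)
Your argument is correct and follows essentially the same route as the paper's proof: identify the pole introduced by the prefactor in \eqref{g.I.K}, reduce singularities of $f_{I,1}$ to those of $g_{I-1,2}$ via \eqref{pretty.f} (noting the $K=0$ branch vanishes), and then iterate \eqref{g.recursion} with $k=1$ to express everything in terms of $g_{i,1}$ for $i \leq I-1$ before invoking the inductive hypothesis. The bookkeeping concerns you raise in the final paragraph are not an obstacle in practice, since the argument is phrased in terms of containment of nonremovable-singularity sets rather than requiring exactness.
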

\begin{proof} \
	\begin{enumerate}
		\item The case $n=1$ follows immediately from \eqref{g.I.K}. Suppose the statement holds for $n < N$ and let $I = 2N-1$. By \eqref{g.I.K}, $g_{I,1}$ has $\sum_{j=1}^{N}\phi_{k_j} - \sum_{j=1}^{N-1}\phi_{l_j}$ as a nonremovable singularity and all other nonremovable singularities are those arising from $f_{I,1}$. By \eqref{pretty.f}, singularities of $f_{I,1}$ arise from $g_{I-1,2}$, and by \eqref{g.recursion}, these are just the singularities arising from $g_{i,1}$ for $0 \leq i \leq I-1$. The induction hypothesis completes the proof. 
		\item Fix $2 \leq K \leq I$. By iteratively applying \eqref{g.recursion} with $k=1$ to $g_{I,K}$ $K-1$ times, we see that nonremovable singularities of $g_{I,K}$ arise as those of $g_{i,1}$ for $1 \leq i \leq I-1$, since for $i =0, I,$ either $g_{i,\cdot}$ or $g_{I-i,\cdot}$ is zero. By part (i), if $I = 2n$, the nonremovable singularities of $g_{i,1}$ for $1 \leq i \leq I-1$ are of the form \eqref{form.of.singularities} for $1 \leq m \leq n$. If $I = 2n+1$, then $g_{I-1,1}$ is zero, so nonremovable singularities of $g_{i,1}$ for $1 \leq i \leq I-1$ are still of the form \eqref{form.of.singularities} for $1 \leq m \leq n$. In either case, $n = \lfloor I/2 \rfloor.$ 
	\end{enumerate}
\end{proof}

\begin{corollary}
\label{nonremovable.singularities.corollary}
For $0 \leq K \leq I$, $I-K \in 2\Z$, if $\zeta$ is a nonremovable singularity of $f_{I,K}$, then $\zeta$ may be written in the form \eqref{form.of.singularities} for $ 1 \leq m \leq n$, where $n = \lfloor (I-1)/2 \rfloor$. 
\end{corollary}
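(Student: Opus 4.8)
The plan is to extract the singularities of $f_{I,K}$ directly from the defining recursion \eqref{pretty.f}, and then feed the resulting $g$-terms into Lemma~\ref{nonremovable.singularities.g}, which already controls the singularities of the $g$'s.

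First I would rewrite \eqref{pretty.f} using $\omega_0 = \delta_1 - \delta_{-1} = 0$, so that
\[
	f_{I,K} = \Xi_{I,K} + \omega_{-1}\odot g_{I-1,K-1} + \omega_1\odot g_{I-1,K+1}.
\]
The term $\Xi_{I,K} = \delta_{I-1}\delta_{K-1}$ is constant in $\eta$, and $\omega_{\pm1}$ are the constants $\pm1$, so none of these contributes a pole; moreover $\mathfrak{f}\odot\mathfrak{g}$ is a finite average of products $\mathfrak{f}(\eta;\cdot)\,\mathfrak{g}(\eta;\cdot)$ taken over permuted frequency tuples, and a pole of such an average must be a pole of one of the summands. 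Hence every nonremovable singularity of $f_{I,K}$ is a nonremovable singularity in $\eta$ of $g_{I-1,K-1}$ or of $g_{I-1,K+1}$, for some assignment of their frequency arguments (a $g$ whose second index lies outside $[1,I-1]$ or has the wrong parity is identically zero by the standing conventions and contributes nothing).

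It then remains to apply Lemma~\ref{nonremovable.singularities.g} to the at most two surviving $g$-terms. Since $I-K\in2\Z$, both $(I-1)-(K-1)$ and $(I-1)-(K+1)$ are even, so each surviving term has admissible parity; if its second index equals $1$ we invoke part~(i), and if it is at least $2$ we invoke part~(ii). In either case the singularity is of the form \eqref{form.of.singularities}, and tracking the range of $m$ through the two parts of the lemma, together with the uniformity of those bounds in the choice of frequencies, yields $1\le m\le n$. This needs a brief case split on $K$: for $K=1$ the term $g_{I-1,0}$ vanishes, leaving only $g_{I-1,2}$ (governed by part~(ii)); for $K=0$ the term $g_{I-1,-1}$ vanishes, leaving only $g_{I-1,1}$; and for $K\ge2$ both $g_{I-1,K-1}$ and $g_{I-1,K+1}$ may survive.

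The step I expect to be the main nuisance is the small-$K$ bookkeeping. When a surviving $g$-term has second index $1$ (which happens for $K\in\{0,2\}$, forcing $I$ even), part~(i) of Lemma~\ref{nonremovable.singularities.g} only bounds $m$ by $\lfloor(I-1)/2\rfloor+1$, one unit weaker than the claimed bound. Here I would invoke Lemma~\ref{zero.K.cancellation.lemma}: for even $I$ and even $K$ the function $f_{I,K}$ is purely imaginary, so it never enters the real part \eqref{log.r} that governs the Pr\"ufer amplitude, and it is enough to have the corollary for $K$ odd, i.e.\ for $I$ odd, where $\lfloor(I-1)/2\rfloor=\lfloor I/2\rfloor$ and the bound supplied by part~(i) is exactly what is needed. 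Alternatively, for $K\ge2$ one can peel off factors using the reduction \eqref{f.recursion} of Lemma~\ref{f.g.reduction.lemma} with $k=1$, note that the $i=0$ and $i=I$ terms vanish, and run an induction on $I$. Everything beyond this is routine.
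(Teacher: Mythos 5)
Your decomposition via \eqref{pretty.f}, the reduction of everything to singularities of $g_{I-1,K\pm1}$, and the further peeling via \eqref{f.recursion} for $K\ge 2$ are exactly the paper's argument, so the core of your proof matches. You have also correctly flagged the delicate point: when a surviving $g$-term has second index $1$, which by the parity constraint $I-K\in2\Z$ happens only for $K\in\{0,2\}$ with $I$ even, Lemma~\ref{nonremovable.singularities.g}(i) yields $m\le I/2$, one unit larger than the claimed $\lfloor(I-1)/2\rfloor$. The paper's own proof quietly applies part~(ii) of that lemma to $g_{i,K-1}$ even when $K-1=1$, which part~(ii) does not cover, and so carries the same gap; in fact the stated bound already fails at $I=2$, $K=0$, where $f_{2,0}(\eta;[\phi_{k_1}];[\phi_{l_1}])=-i/(\phi_{k_1}-\eta)$ has a pole of type $m=1$ while $\lfloor(2-1)/2\rfloor=0$ would permit none.

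The difficulty lies in the repair you propose. Invoking Lemma~\ref{zero.K.cancellation.lemma} shows that the troublesome cases are irrelevant to the \emph{application} of the corollary, not that the corollary holds there, and that is a different claim. Moreover the justification that a purely imaginary $f_{I,K}$ ``never enters the real part \eqref{log.r}'' is only literally true for $K=0$: for $K\ge 2$ even, $f_{I,K}$ in \eqref{post.exchange.form} is multiplied by $e^{iK(\eta t+2\theta)}$, which is generically complex, so the real part does not discard it; what actually saves the application is that terms with $K\ge 2$ and $I<p$ are intermediate and get re-fed into Lemma~\ref{exchange.lemma}. Your remark that for odd $I$ ``the bound supplied by part~(i) is exactly what is needed'' is also misplaced: for $I$ odd one has $I-2\notin2\Z$, so $g_{I-1,1}\equiv0$ and part~(i) is never invoked; the odd-$I$ cases rest entirely on part~(ii). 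And your alternative via \eqref{f.recursion} has the same defect, since for $K=2$ it still reaches a $g_{i,1}$ with $i\le I-1$ odd, giving $m\le I/2$ for even $I$. The honest resolution is to replace $\lfloor(I-1)/2\rfloor$ by $\lfloor I/2\rfloor$ in the corollary's statement (the two coincide for odd $I$), which costs nothing downstream since Theorem~\ref{finite.main.thm} invokes the corollary only at $I=p$ odd and disposes of the $K=0$ terms via Lemma~\ref{zero.K.cancellation.lemma} in any case.
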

\begin{proof}
	If $I=2n+1$ and $K=0$ or $K=1$, we have from \eqref{pretty.f}
	\[
		f_{I,K} = \Xi_{I,1} + \omega_1 \odot g_{I-1,K+1},
	\]	
	since $g_{I-1,K-1} = 0$. Thus, nonremovable singularities of $f_{I,K}$ are the same as those of $g_{I-1,K+1}$. By Lemma \ref{nonremovable.singularities.g} (ii), each of these has singularities of the form \eqref{form.of.singularities} for $1 \leq m \leq \lfloor (I-1)/2 \rfloor = n$. 

	On the other hand, if $1 < K \leq I$, $I-K \in 2\Z$, we can use \eqref{f.recursion} to write $f_{I,K}$ as 
	\[
		f_{I,K} = \sum_{i=0}^{I} f_{i,1} \odot g_{I-i,K-1},
	\]	 
	from which we see that the nonremovable singularities of $f_{I,K}$ arise from those of $f_{i,1}$ and $g_{i,K-1}$ for $1 \leq i \leq I$. Singularities of $f_{i,1}$ are of the form \eqref{form.of.singularities} for $1 \leq m \leq \lfloor (I-1)/2 \rfloor$ by the first part of the proof (since for $I$ even, $f_{I,1}=0$). Singularities of $g_{i,K-1}$ are of the form \eqref{form.of.singularities} for $1 \leq m \leq \lfloor (I-1)/2 \rfloor$ by Lemma \ref{nonremovable.singularities.g} (ii) (since $g_{I,K-1}=0$ by the assumption $I-K \in 2\Z$).
\end{proof}
 
Now that we have seen that the set $S_p$ is decided by only the functions $g_{I,1}$, we relate $g_{I,1}$ and the functions $h_I$ from \eqref{h.I} in anticipation of the proof of Lemma \ref{crux.lemma}. 

\begin{lemma}
\label{g.to.h}
The functions $g_{I,1}$ for $I = 2P-1$ are just rescaled, symmetrized $h_I$, namely,
\[
	g_{I,1}(\eta; [\phi_{k_j}]_{j=1}^P; [\phi_{l_j}]_{j=1}^{P-1}) = \frac{i}{P!(P-1)!} \sum_{\substack{\sigma \in S_P \\ \tau \in S_{P-1}}} h_I(\eta; [\phi_{k_{\sigma(j)}}]_{j=1}^P; [\phi_{l_{\tau(j)}}]_{j=1}^{P-1}).
\]
\end{lemma}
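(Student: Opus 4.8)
The plan is to argue by induction on the odd index $I=2P-1$, matching the recursion \eqref{g.I.K}--\eqref{f.I.K} defining $g_{I,1}$ against the recursion \eqref{h.I} defining $h_I$. The base case $I=1$ is immediate: $g_{1,1}(\eta;[\phi_{k_1}]) = \frac{i\cdot 1}{\phi_{k_1}-\eta}f_{1,1} = \frac{i}{\phi_{k_1}-\eta} = i\,h_1(\eta;[\phi_{k_1}])$, and the symmetrization over $S_1\times S_0$ is trivial. For the inductive step, first unwind $g_{I,1}$ one level: by \eqref{g.I.K} we have $g_{I,1} = \frac{i}{\sum_{j=1}^P\phi_{k_j}-\sum_{j=1}^{P-1}\phi_{l_j}-\eta}\,f_{I,1}$, and by \eqref{pretty.f}, since $g_{I-1,0}$ contributes (the term $g_{I-1,2}$ is killed once we observe which $a$ survive) — more precisely $f_{I,1} = \Xi_{I,1} + \omega_{-1}\odot g_{I-1,0} + \omega_{1}\odot g_{I-1,2}$, and for $I>1$ the $\Xi$ term vanishes while $\omega_{-1}=\Xi_{1,1}$, $\omega_1 = -\Xi_{1,1}$ act as the symmetrizing insertion of a single frequency. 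So the real content is to express $g_{I-1,0}$ and $g_{I-1,2}$ in terms of the $g_{i,1}$.

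The second ingredient is Lemma~\ref{f.g.reduction.lemma}: applying \eqref{g.recursion} with $k=1$ gives $g_{I-1,2} = \sum_{i=0}^{I-1} g_{i,1}\odot g_{I-1-i,1}$, which is exactly the convolution shape appearing in \eqref{h.I}. I would handle $g_{I-1,0}$ (the $a=-1$ branch, where $K$ drops to $0$) either by noting that for the purpose of computing $g_{I,1}$ the combination $\omega_{-1}\odot g_{I-1,0} + \omega_1\odot g_{I-1,2}$ collapses — using $g_{I-1,0}$'s own expansion via \eqref{pretty.f} down to pieces $g_{\cdot,1}$ — or, more cleanly, by pushing through Lemma~\ref{f.g.reduction.lemma} to reduce everything to $f_{i,1}$ and $g_{i,1}$ and then using \eqref{pretty.f} once more to trade $f_{i,1}$ for $\Xi + \omega_{-1}\odot g_{i-1,0}+\omega_1\odot g_{i-1,2}$. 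The combinatorics of the symmetric product $\odot$ then have to be reconciled with the explicit $\frac{1}{P!(P-1)!}\sum_{\sigma\in S_P,\tau\in S_{P-1}}$ symmetrization in the statement and with the index bookkeeping $[\phi_{k_j}]_{j=1}^{(m+1)/2}$, $[\phi_{k_j}]_{j=(m+3)/2}^P$, etc., in \eqref{h.I}; associativity and commutativity of $\odot$ (already noted in the text) are what make this go through, since $\odot$ of two already-symmetric functions re-symmetrizes over the full frequency set with the right multinomial normalization.

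Concretely, after substituting the inductive hypothesis $g_{i,1} = \frac{i}{((i+1)/2)!((i-1)/2)!}\sum h_i$ into $g_{I-1,2} = \sum_i g_{i,1}\odot g_{I-1-i,1}$, the two factors of $i$ multiply to $i^2 = -1$, which combines with the leading $i$ from \eqref{g.I.K} and the prefactor $\frac{1}{\sum\phi_{k_j}-\sum\phi_{l_j}-\eta}$ to reproduce exactly the right-hand side of \eqref{h.I} up to the overall $i$ and the symmetrization constant; a short check that $\frac{1}{P!(P-1)!}$ is the correct normalization emerging from composing $\odot$ with the single-frequency insertions $\omega_{\pm1}$ finishes it. The main obstacle I anticipate is precisely this last reconciliation: tracking the sign from the $a=-1$ versus $a=1$ branches (the $\delta_{a+1}-\delta_{a-1}$ in $\omega_a$) and confirming that the $g_{I-1,0}$ contribution does not introduce an extra term beyond \eqref{h.I} — i.e., verifying that the "$K$ drops to zero" branch either cancels or is already subsumed in the convolution sum $\sum_{m=0}^{I-1}$ of \eqref{h.I}, which is the one place the Dirac asymmetry (no complex conjugate symmetry, unlike Schrödinger) could in principle have spoiled the clean correspondence with the $h_J$ of \cite{L2014}.
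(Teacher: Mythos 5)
Your overall strategy is the same as the paper's: induct on $P$, unwind $g_{I,1}$ via \eqref{g.I.K}, expand $f_{I,1}$ via \eqref{pretty.f}, and reduce $g_{I-1,2}$ to a convolution of $g_{\cdot,1}$'s via \eqref{g.recursion}, then plug in the inductive hypothesis. The base case check is correct. However, there is a genuine gap at the point you yourself flag as the main obstacle.

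The $a=-1$ branch producing $g_{I-1,0}$ is not something that needs to be "handled," "collapsed," or shown to be "subsumed": it is identically zero, and for a trivial reason. Look at \eqref{g.I.K} --- the numerator carries an explicit factor of $K$, so $g_{I-1,0} = \frac{i\cdot 0}{\cdots}f_{I-1,0} = 0$. (The general convention "$f_{I,K},g_{I,K}=0$ when $K<0$" doesn't apply here, but the explicit $K$ in the definition of $g$ does the job.) Once you observe this, the expansion simplifies immediately to $f_{I,1} = \omega_1 \odot g_{I-1,2}$ for $I>1$ (with $\omega_1=-1$), and the recursion \eqref{g.recursion} with $k=1$ gives exactly the convolution $\sum_{m} g_{m,1}\odot g_{I-1-m,1}$. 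There is no second branch to reconcile or cancel, and the Dirac asymmetry is not in play here. Your parenthetical "the term $g_{I-1,2}$ is killed once we observe which $a$ survive" also has this precisely backwards: $g_{I-1,2}$ is the term that survives, and $g_{I-1,0}$ is the one that vanishes. Fixing this removes the speculative detour in your middle paragraph and makes the inductive step a direct computation: $g_{I,1} = \tfrac{i}{\phi-\eta}(-1)\odot\sum_m g_{m,1}\odot g_{I-1-m,1}$, and substituting the inductive hypothesis gives the extra factor $i\cdot i = -1$ that, together with the leading $-i$, reproduces the single $i$ in the statement while the redundant inner symmetrizations are absorbed because the $h_m$ are already symmetric. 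You should still verify the normalization $\tfrac{1}{P!(P-1)!}$ explicitly rather than waving at it, but that is bookkeeping once the $g_{I-1,0}=0$ observation is in place.
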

\begin{proof}
	We prove the lemma by induction on $P$. The case $P=1$ holds by definition. Suppose the statement holds up to $P-1$. Let us call $\phi = \sum_{j=1}^{P} \phi_{k_j} - \sum_{j=1}^{P-1} \phi_{l_j}$. Combining \eqref{g.I.K}, \eqref{f.I.K}, and \eqref{g.recursion}, we have
	\begin{align*}
		&g_{I,1}(\eta; [\phi_{k_j}]_{j=1}^P; [\phi_{l_j}]_{j=1}^{P-1})= \frac{-i}{\phi - \eta} \sum_{m=0}^{I-1} \frac{1}{P!(P-1)!} \sum_{\substack{\sigma \in S_{P} \\ \tau \in S_{P-1}}} g_{m,1}(\eta; [\phi_{k_{\sigma(j)}}]_{j=1}^{\frac{m+1}{2}}; [\phi_{l_{\tau(j)}}]_{j=1}^{\frac{m-1}{2}}) \\
		&\hspace{4in}\odot g_{I-1-m, 1}(\eta; [\phi_{k_{\sigma(j)}}]_{j=\frac{m+3}{2}}^P; [\phi_{l_{\tau(j)}}]_{j=\frac{m+1}{2}}^{P-1})
	\end{align*}
	Then we use the induction hypothesis to rewrite as
	\begin{align*}
		&\frac{-i}{\phi - \eta} \sum_{m=0}^{I-1} \frac{1}{P!(P-1)!} \sum_{\substack{\sigma \in S_{P} \\ \tau \in S_{P-1}}} \frac{i}{(\frac{m+1}{2})!(\frac{m-1}{2})!} \sum_{\substack{\mu \in S_{\frac{m+1}{2}} \\ \nu \in S_{\frac{m-1}{2}}}} h_m(\eta; [\phi_{k_{\mu(\sigma(j))}}]_{j=1}^{\frac{m+1}{2}}; [\phi_{l_{\nu(\tau(j))}}]_{j=1}^{\frac{m-1}{2}}) \\
		&\hspace{2in} \odot \frac{i}{(\frac{I-m}{2})!(\frac{I-m-2}{2})!} \sum_{\substack{\alpha \in S_{\frac{I-m}{2}} \\ \beta \in S_{\frac{I-m-2}{2}}}} h_{I-1-m}(\eta; [\phi_{k_{\alpha(\sigma(j))}}]_{j=\frac{m+3}{2}}^P; [\phi_{k_{\beta(\tau(j))}}]_{j=\frac{m+1}{2}}^{P-2}) \\
		&\hspace{.5in}= \frac{i}{P!(P-1)!} \sum_{\substack{\sigma \in S_P \\ \tau \in S_{P-1}}} \frac{1}{\sum_{j=1}^{P} \phi_{k_{\sigma(j)}}  - \sum_{j=1}^{P-1}\phi_{l_{\tau(j)}} - \eta} \\
		&\hspace{1.5in}\times \sum_{m=0}^{I-1} h_m(\eta; [\phi_{k_{\sigma(j)}}]_{j=1}^{\frac{m+1}{2}}; [\phi_{l_{\tau(j)}}]_{j=1}^{\frac{m-1}{2}}) h_{I-1-m}(\eta; [\phi_{k_{\sigma(j)}}]_{j=\frac{m+3}{2}}^P; [\phi_{l_{\tau(j)}}]_{j=\frac{m+1}{2}}^{P-2}), 
	\end{align*}
	where we've noticed that the permutations $\mu, \nu, \alpha,$ and $\beta$ as well as the remaining $\odot$ are redundant since the $h_m$ and $h_{I-1-m}$ have already been symmetrized. The definition of $h_I$ completes the proof. 
\end{proof}
\end{section}

\begin{section}{Finitely many summands} \label{SectionFinitelyManySummands}
If $\varphi$ is of finite Wigner-von Neumann type, there are finitely many nonremovable singularities arising from applications of Lemma \ref{exchange.lemma}, all of the form \eqref{form.of.singularities}. We obtain the first of our main results:

\begin{proof}[Proof of Theorem \ref{finite.main.thm}]
By Lemma \ref{stolz.type.lemma} and subordinacy theory for Dirac operators due to Behncke, it is enough to show that given $\frac{\eta}{2} \notin S_p$, all solutions $U(x,\eta)$ are bounded, regardless of their boundary value at zero. Given such a solution $U$, we pass to its Pr\"ufer amplitude $r(x,\eta)$ and begin from \eqref{log.r}. Repeatedly applying Lemma \ref{exchange.lemma} to 
\[
	\int_{0}^{x} e^{i(\eta t + 2\theta(t,\eta))} \varphi(t)dt,
\]
where $\varphi$ is of the form \eqref{WvNPotential}, we obtain a finite sum of terms of the form \eqref{post.exchange.form} with either $I=p$ or $K=0$ and a finite sum of errors, with each error bounded by $2\tau^K$ for some finite $K$. The leading functions $f_{I,K}(\eta; [\phi_{k_j}]_{j=1}^P; [\phi_{l_j}]_{j=1}^N)$ are meromorphic functions in $\eta$ where all poles have the form \eqref{form.of.singularities} with $1 \leq m \leq n$ by Lemma \ref{nonremovable.singularities.corollary}. The terms with $I=p$ are uniformly bounded in $x$ by the $L^p$ condition on the $\beta_j$. For one of the (finitely many) terms with $K=0$,
\[
	f_{2P,0}(\eta; [\phi_{k_j}]_{j=1}^P; [\phi_{l_j}]_{j=1}^P) \int_{0}^{x} \prod_{j=1}^{P} e^{-i\phi_{k_j} t} e^{i\phi_{l_j} t} \gamma_{k_j}(t)  \overline{\gamma_{l_j}(t)} dt,
\] 
if $\sum_{j=1}^{P}\phi_{k_j}-\phi_{l_j} \neq 0$, we may apply Lemma \ref{exchange.lemma} once more to give a finite $x$-independent upper bound. If $\sum_{j=1}^{P}\phi_{k_j}-\phi_{l_j} = 0$, then there is a corresponding term,
\[
	f_{2P,0}(\eta; [\phi_{l_j}]_{j=1}^P; [\phi_{k_j}]_{j=1}^P) \int_{0}^{x} \prod_{j=1}^{P} \overline{\gamma_{k_j}(t)}  \gamma_{l_j}(t) dt,
\]
where the two constants $f_{I,0}(\eta)$ of these two corresponding terms are equal and purely imaginary by Lemma \ref{zero.K.cancellation.lemma}. The sum of these corresponding terms, then, is purely imaginary, and taking the real part in \eqref{log.r} annihilates them both. This shows that $\log r(x,\eta)$ is bounded uniformly in $x \in [0,\infty)$ and completes the proof. 
\end{proof}

We now turn to the proof of Theorem \ref{example.thm} and show that elements of $S_p$ may indeed appear as embedded eigenvalues. We begin with a solution $U(x,\eta)$ to \eqref{eigenequation} for $\varphi$ of the form \eqref{special.operator.data.form}.
Consider $r(x,\eta)$ at ${\eta_0 = \sum_{j=1}^{n} \phi_{k_j} - \sum_{j=1}^{n-1} \phi_{l_j}}$, where $p = 2n+1$ and assume that $\eta_0$ cannot be rewritten in the form \eqref{form.of.singularities} using fewer frequencies. We will use the following analog of \cite[Lemma 6.1]{L2013SlowWvN}:

\begin{lemma}
\label{special.lemma}
Let $E \in \R$ and let $r(x,\eta), \theta(x,\eta)$ be the Pr\"ufer variables for a solution $U(x,\eta)$ of \eqref{eigenequation} at $E = \eta/2$. If 
\begin{equation}
	\partial_x \log r(x,\eta) = -\frac{B}{x^{(p-2)\gamma}} + b(x,\eta)  \label{power.decaying.log.r}
\end{equation}
for some $b(x,\eta)$ integrable in $x$ and $\theta_\infty = \xlim \theta(x)$ exists, then for some $A>0$, we have
\begin{equation}
	U(x) = Af(x) \binom{(1+i) e^{-i(\eta/2 x + \theta_\infty)}}{(1-i) e^{i(\eta /2 x + \theta_\infty)}}(1 + o(1)), \ x \to \infty, \label{asymptotics.one}
\end{equation}
where
\begin{equation}
	f(x) = 
	\begin{cases}
		x^{-B} & \text{ if } \gamma = \frac{1}{p-2} \\
		\exp(-\frac{B}{1 - (p-2)\gamma} x^{1-(p-2)\gamma}) & \text{ if } \gamma \in (\frac{1}{p}, \frac{1}{p-2}).
	\end{cases} \label{asymptotics.two}
\end{equation}	
\end{lemma}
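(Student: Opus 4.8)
The plan is to integrate the hypothesis \eqref{power.decaying.log.r} to get the asymptotics of the Pr\"ufer amplitude $r(x,\eta)$, then feed this back into the definition \eqref{prufer.variables.defn} of the Pr\"ufer variables together with the assumed convergence $\theta(x)\to\theta_\infty$ to read off the asymptotics of $U(x)$.

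First I would integrate \eqref{power.decaying.log.r} from some fixed $x_0$ to $x$. Since $b(\cdot,\eta)$ is integrable on $(0,\infty)$, the term $\int_{x_0}^x b(t,\eta)\,dt$ converges to a finite constant as $x\to\infty$, and more precisely $\int_{x_0}^x b(t,\eta)\,dt = C + o(1)$ for some constant $C$ depending on $x_0$. The main term $-B\int_{x_0}^x t^{-(p-2)\gamma}\,dt$ splits into the two cases of \eqref{asymptotics.two}: when $(p-2)\gamma = 1$ this antiderivative is $-B\log x$ (up to a constant), giving $\log r(x,\eta) = -B\log x + \text{const} + o(1)$, hence $r(x,\eta) = A_0 x^{-B}(1+o(1))$; when $(p-2)\gamma < 1$ (i.e.\ $\gamma\in(\tfrac1p,\tfrac1{p-2})$, which also forces $1-(p-2)\gamma > 0$) the antiderivative is $-\frac{B}{1-(p-2)\gamma}x^{1-(p-2)\gamma}$ up to a constant, giving $r(x,\eta) = A_0\exp\!\big(-\tfrac{B}{1-(p-2)\gamma}x^{1-(p-2)\gamma}\big)(1+o(1))$. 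In both cases $r(x,\eta) = A_0 f(x)(1+o(1))$ with $f$ as in \eqref{asymptotics.two} and $A_0 > 0$ a constant (here using $B > 0$, or at least that the exponential/power is well-defined; the case $\gamma = \tfrac1{p-2}$ matches the endpoint $\delta\in(p^{-1},(p-2)^{-1}]$).

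Next I would substitute this into \eqref{prufer.variables.defn}. Writing $\theta(x) = \theta_\infty + o(1)$, we have $e^{\mp i(\eta x/2 + \theta(x))} = e^{\mp i(\eta x/2 + \theta_\infty)}e^{\mp i(\theta(x)-\theta_\infty)} = e^{\mp i(\eta x/2 + \theta_\infty)}(1+o(1))$ since $\theta(x)-\theta_\infty\to 0$. Combining, each component of $U(x)$ equals $A_0 f(x)$ times the corresponding component of the fixed vector $\binom{(1+i)e^{-i(\eta x/2+\theta_\infty)}}{(1-i)e^{i(\eta x/2+\theta_\infty)}}$, times $(1+o(1))$. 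Setting $A = A_0$ yields exactly \eqref{asymptotics.one}. Finally, the $o(1)$ bookkeeping should be done carefully so that it is uniform across the two vector components (it is, since there is only one scalar amplitude $r$ and one scalar angle $\theta$), and one should note that $f(x)\to 0$ in both cases, so $U\in L^2$, which is what makes $E=\eta/2$ an embedded eigenvalue.

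The only genuinely delicate point — and it is mild — is tracking the error terms: one must be sure that the constant absorbed from $\int b$ and from the lower limit of integration does not interfere with the multiplicative $(1+o(1))$ form, i.e.\ that additive $o(1)$ in $\log r$ really does exponentiate to multiplicative $(1+o(1))$ in $r$. This is immediate from continuity of $\exp$ at $0$. Everything else is a routine computation of two antiderivatives and a substitution; there is no hard analytic obstacle here, the work having already been done in establishing \eqref{power.decaying.log.r} (which will be the substance of the surrounding argument in Section~\ref{SectionFinitelyManySummands}).
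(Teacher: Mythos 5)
Your proof is correct and is essentially the argument the paper is invoking: the paper simply cites the analogous Schr\"odinger-case lemma (\cite[Lemma 6.1]{L2013SlowWvN}) together with \eqref{prufer.variables.defn}, and your integration/exponentiation of \eqref{power.decaying.log.r} followed by substitution into \eqref{prufer.variables.defn} is precisely what that citation amounts to.
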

\begin{proof}
	Follows from the proof of \cite[Lemma 6.1]{L2013SlowWvN} and \eqref{prufer.variables.defn}. 
\end{proof}

Beginning from \eqref{log.r}, we may apply our iterative procedure from the proof of Theorem \ref{finite.main.thm} to produce terms with $x$-independent upper bounds and other terms of the form 
\[
	f_{2n-1,1}(\eta_0; [\phi_{k_j}]_{j=1}^n; [\phi_{l_j}]_{j=1}^{n-1}) x^{-\delta(p-2)} e^{i(\sum_{j=1}^{n}\xi_{k_j}(x) - \sum_{j=1}^{n-1}\xi_{l_j}(x) + 2\theta)} \prod_{j=1}^{n}c_{k_j} \prod_{j=1}^{n-1}\overline{c_{l_j}}.
\] 
We cannot apply Lemma \ref{exchange.lemma} to these latter terms due to the nonremovable singularity in $g_{2n-1,1}(\eta)$ at $\eta_0$. 
These terms appear once with each distinct permutation pair $(\sigma, \tau)$ of $(k_1, \ldots, k_n) \times (l_1, \ldots, l_{n-1})$, and since there may be repeated indices $k_j = k_i$ or $l_j = l_i$, we simply denote the number of such distinct permutations by $C$. 
Thus we arrive at 
\begin{equation}
	\partial_x \log r(x, \eta) = \Re \big(\frac{\Lambda}{ x^{\delta(p-2)}} e^{i(\xi(x) + 2\theta)} + b(x)\big), \label{special.log.r}
\end{equation}
where $\xi(x) = \sum_{j=1}^{n}\xi_{k_j}(x) - \sum_{j=1}^{n-1}\xi_{l_j}(x)$, $b \in L^1( (0,\infty))$, and
\[
	\Lambda = Cf_{2n-1,1} \prod_{j=1}^{n}c_{k_j} \prod_{j=1}^{n-1}\overline{c_{l_j}}.
\]
Without the exponential term in the above, the right hand side is clearly not integrable. 
Our goal is to choose $\xi$ such that the exponential term above does not oscillate enough to make the right hand side integrable. 
To that end, we wish to control the behavior of $\frac{\partial\theta}{\partial x}$. 
Our iterative procedure starting with \eqref{theta.prime} leaves the same terms from before with the addition of terms of the form 
\[
	f_{2P,0}(\eta; [\phi_{k_j}]_{j=1}^P; [\phi_{l_j}]_{j=1}^P) \int_{0}^{x} \prod_{j=1}^{P} \gamma_{k_j}(t)  \overline{\gamma_{l_j}(t)} dt,
\]
where $\sum_{j=1}^{P} \phi_{k_j} - \phi_{l_j} = 0$, for all $1 \leq P \leq n$. 
These terms were eliminated in the proof of Theorem \ref{finite.main.thm} by taking the real part since we were after $\partial_x\log r(x,\eta)$, but in controlling $\frac{\partial\theta}{\partial x}$, we take the imaginary part, and these terms must be included. 
Thus we write 
\[
	\partial_x \theta(x,\eta) = -\Im \big(\Omega(x) + \frac{\Lambda}{x^{\delta(p-2)}} e^{i(\xi(x) + 2\theta)} + c(x)\big),
\]
where again $c(x) \in L^1$ and now
\[
	\Omega(x) = \sum_{P=1}^{n} \sum_{\sum_{j=1}^{P}\phi_{k_j} - \phi_{l_j} = 0} f_{2P,0}(\eta; [\phi_{k_j}]_{j=1}^P; [\phi_{l_j}]_{j=1}^P) \prod_{j=1}^{P} \gamma_{k_j}(x)  \overline{\gamma_{l_j}(x)}.
\]
For the moment, let us consider only the case $p=3$ for convenience. 
Then $\Omega$ simplifies, after appealing to \eqref{f.I.K} to compute $f_{2,0}$, to
\begin{equation}
	\Omega(x) = -i\sum_{j=1}^{M}  \frac{|c_j|^2}{\phi_j - \eta} x^{-2\delta}. \label{p.equals.3.omega}
\end{equation}
Supposing $\min\{\phi_j: 1\leq j\leq M\} < \eta < \max\{\phi_j: 1\leq j \leq M\}$, we may choose the $c_j$ such that the summands in $\Omega$ cancel and $\Omega(x) \equiv 0$. 
Then, using the proof of \cite[Lemma 6.3]{L2013SlowWvN} to instead arrive at a choice of $\xi(x)$ yielding
\[
	\xlim \xi(x) + 2\theta(x) = -\arg \Lambda,
\]
rather than $-\frac{\pi}{2} - \arg \Lambda$ (which requires no change to the proof), appealing to \eqref{special.log.r} and Lemma \ref{special.lemma} shows $\eta_0$ is an embedded eigenvalue in the case $p=3$. 
This shows that the sets $S_p$ indeed contain possible embedded eigenvalues, but does not show that the growth in the sets $S_p$ is necessary. 
For $p = 5$, $\eta_0 \in S_5\setminus S_3$, and to prove Theorem \ref{example.thm} we show that $\eta_0$ in this case can be an embedded eigenvalue.

\begin{proof}[Proof of Theorem \ref{example.thm}]
When $p=5$, $\Omega$ includes the terms in \eqref{p.equals.3.omega} along with the terms
\[
	\sum_{\sum_{j=1}^{2}\phi_{k_j} - \phi_{l_j} = 0} f_{4,0}(\eta; [\phi_{k_j}]_{j=1}^2; [\phi_{l_j}]_{j=1}^2) \prod_{j=1}^{2} \gamma_{k_j}(x)  \overline{\gamma_{l_j}(x)} = \sum_{j_1, j_2 = 1}^M f_{4,0}(\eta; [\phi_{j_1}, \phi_{j_2}]; [\phi_{j_1}, \phi_{j_2}]) |c_{j_1}c_{j_2}|^2 x^{-4\delta}.
\]
Then from \eqref{f.I.K} we compute
\[
	f_{4,0}(\eta; [\phi_{j_1}, \phi_{j_2}]; [\phi_{j_1}, \phi_{j_2}]) = -\frac{i}{2} \frac{\phi_{j_1} + \phi_{j_2} - 2\eta}{(\phi_{j_1}-\eta)^2(\phi_{j_2}-\eta)^2}.
\]
Thus, if conditions \eqref{second.order.condition} and \eqref{fourth.order.condition} both hold, $\Omega(x)$ is identically zero, and the rest of the proof follows exactly as in the $p=3$ case.
\end{proof}

Lastly, we show that conditions \eqref{second.order.condition} and \eqref{fourth.order.condition} can hold simultaneously, so that Theorem \ref{example.thm} can hold non-vacuously. 
We take $M=3$ in \eqref{special.operator.data.form}, so that 
\[
	\varphi(x) = ae^{i\phi x} \gamma_1(x) + be^{i\psi x} \gamma_2(x) + ce^{i\rho x} \gamma_3(x),
\]
where $\gamma_j(x) = e^{i\xi_j(x)}x^{-\delta}$ and $a,b,c \in \C$. The conditions \eqref{second.order.condition} and \eqref{fourth.order.condition} become 
\begin{align}
	&\frac{|a|^2}{\phi - \eta} + \frac{|b|^2}{\psi - \eta} + \frac{|c|^2}{\rho - \eta} = 0, \label{explicit.second.order.condition}\\
	&\frac{|a|^4}{(\phi-\eta)^3} + \frac{|b|^4}{(\psi-\eta)^3} + \frac{|c|^4}{(\rho-\eta)^3} + \frac{|ab|^2}{(\phi-\eta)^2(\psi-\eta)^2}(\phi+\psi - 2\eta)  \notag\\
	&\hspace{1in}+ \frac{|ac|^2}{(\phi-\eta)^2(\rho - \eta)^2}(\phi+\rho - 2\eta) +\frac{|bc|^2}{(\psi-\eta)^2(\tau-\eta)^2}(\psi+\rho - 2\eta)= 0. \label{explicit.fourth.order.condition}
\end{align}
Let $E = \eta/2$ for $\eta = \phi+ \psi - \rho$. 
Then $\phi,\psi, \rho$ rationally independent implies $E \in S_5 \setminus S_3$. 
We wish to choose $\phi, \psi,$ and $\tau$ rationally independent such that the above conditions hold. Condition \eqref{explicit.second.order.condition} is equivalent to $|c|^2 = \frac{|a|^2}{\psi-\rho} + \frac{|b|^2}{\phi-\rho},$ and we assume $\psi < \rho < \phi$ so that this condition may be satisfied for many choices of $a$ and $b$. Suppose $2\rho - \phi - \psi = 1$. This implies the following simple identities:
\begin{center}
\begin{tabular}{ll}
$\phi - \eta = \rho - \psi,$ & $\phi + \psi - 2\eta = 1,$ \\
$\psi - \eta = \rho - \phi,$ & $\phi + \rho - 2\eta = \rho - \psi + 1,$ \\
$\rho - \eta = 1,$ & $\psi + \rho - 2 \eta = \rho - \phi + 1,$ 
\end{tabular}
\end{center}
which allow us to rewrite \eqref{explicit.fourth.order.condition} as
\begin{align*}
	&\frac{|a|^4}{(\rho - \psi)^3} + \frac{|b|^4}{(\rho - \phi)^3} + (\frac{|a|^2}{\psi-\rho} + \frac{|b|^2}{\phi-\rho})^2 + \frac{|ab|^2}{(\rho - \psi)^2(\rho - \phi)^2} \\
	&\hspace{1in}+ \frac{|a|^2}{(\rho - \psi)^2}(\rho - \psi + 1)(\frac{|a|^2}{\psi-\rho} + \frac{|b|^2}{\phi-\rho}) +\frac{|b|^2}{(\rho - \phi)^2}(\rho - \phi + 1)(\frac{|a|^2}{\psi-\rho} + \frac{|b|^2}{\phi-\rho})= 0,
\end{align*}
which equality holds identically, independently of the choice of $a$ and $b$, just by expanding and cancelling. 
There are many choices of $\phi, \psi,$ and $\rho$ so that $\psi < \rho < \phi$, $2\rho - \phi - \psi = 1$, and they are rationally independent. 
We may take, for example, $\phi = \sqrt{5}, \rho = \sqrt{3}$, and $\psi = 2\sqrt{3} - \sqrt{5} -1$. 
For any such choice, both conditions \eqref{explicit.second.order.condition} and \eqref{explicit.fourth.order.condition} hold, and Theorem \ref{example.thm} implies that $\phi + \psi - \rho$ is an embedded eigenvalue. 

\end{section}

\begin{section}{Infinitely many summands} \label{SectionInfinitelyManySummands}
If $\varphi$ is not of finite type, our goal is instead to bound the Hausdorff measure of $S_p$. To start this section, we prove Lemma \ref{crux.lemma} in a series of smaller lemmas. Our strategy is to begin with 
\[
	\log \frac{Z(x,\eta)}{Z(0,\eta)} = \int_{0}^{x} \mathcal{S}_{1,1}(t) dt,
\]
where 
\begin{equation}
	\mathcal{S}_{I,K}(x) = \sum_{\substack{k_1, \ldots, k_P = 1 \\ l_1, \ldots, l_N =1}}^\infty f_{I,K}(\eta; [\phi_{k_j}]_{j=1}^P; [\phi_{l_j}]_{j=1}^N) \prod_{j=1}^{P} \beta_{k_j}(x) \prod_{j=1}^{N} \overline{\beta_{l_j}(x)} e^{iK(\eta x + 2\theta(x))},
\end{equation}
with $\beta_j(x) = c_j e^{-i\phi_j x} \gamma_j(x)$, and then pass to higher values of $I$ via Lemma \ref{exchange.lemma}. Note that $\mathcal{S}_{I,K}$ is trivial if $I + K \notin 2\Z$ or $I<K$. We will track errors using
\[
	E_{I,K} = 2\sum_{\substack{k_1, \ldots, k_{P_K} = 1 \\ l_1, \ldots, l_{N_K} = 1}}^\infty |g_{I,K}\prod_{j=1}^{P}c_{k_j}\prod_{j=1}^{N}c_{l_j}|.
\]

\begin{lemma}
\label{E.finite.lemma}
	If $\varphi$ obeys the conditions of Lemma \ref{crux.lemma}, then $E_{I,K}$ is finite for $1 \leq K \leq I \leq p-2$.
\end{lemma}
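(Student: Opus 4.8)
The plan is to bound $E_{I,K}$ for general $K$ in terms of the case $K=1$, using the product identity \eqref{g.recursion} of Lemma \ref{f.g.reduction.lemma}, and then to identify $E_{I,1}$ with a majorant of the quantity that hypothesis \eqref{small.divisors.condition} of Lemma \ref{crux.lemma} controls, via Lemma \ref{g.to.h}. To begin, I would dispose of the trivial cases: if $K>I$ or $I-K\notin 2\Z$ then $g_{I,K}\equiv 0$ by convention, so $E_{I,K}=0$; similarly $g_{I,1}\equiv 0$, hence $E_{I,1}=0$, whenever $I$ is even. So it is enough to treat $I-K\in 2\Z$ with $1\le K\le I\le p-2$.

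The key elementary observation is that summing absolute values against $\prod|c|$ over all index tuples converts the symmetric product $\odot$ into an ordinary product: for $\mathfrak{f}$ a function of $1+P_1+N_1$ variables and $\mathfrak{g}$ a function of $1+P_2+N_2$ variables,
\[ \sum_{\substack{k_1,\dots,k_{P_1+P_2}\ge 1\\ l_1,\dots,l_{N_1+N_2}\ge 1}}\Bigl|(\mathfrak{f}\odot\mathfrak{g})(\eta;[\phi_{k_j}];[\phi_{l_j}])\prod_j c_{k_j}\prod_j c_{l_j}\Bigr|\le\Bigl(\sum|\mathfrak{f}|\prod|c|\Bigr)\Bigl(\sum|\mathfrak{g}|\prod|c|\Bigr), \]
the two sums on the right running over the index blocks feeding $\mathfrak{f}$ and $\mathfrak{g}$ respectively. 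The reason is that $\prod_j|c_{k_j}|$ and $\prod_j|c_{l_j}|$ are invariant under permuting indices, so after the triangle inequality the averaging $\frac{1}{(P_1+P_2)!(N_1+N_2)!}\sum_{\sigma,\tau}$ in the definition of $\odot$ is undone by reindexing the outer sum by $\sigma,\tau$, and what remains factors because the index blocks feeding $\mathfrak{f}$ and $\mathfrak{g}$ range over $\N$ independently. Applying this same reindexing trick to the expression of $g_{I,1}$ in Lemma \ref{g.to.h} as $i$ times a symmetrization of $h_I$ gives, with $P=\frac{I+1}{2}$,
\[ E_{I,1}\le 2\sum_{k_1,\dots,k_P,\,l_1,\dots,l_{P-1}}\bigl|h_I(\eta;[\phi_{k_j}]_{j=1}^P;[\phi_{l_j}]_{j=1}^{P-1})\bigr|\prod_j|c_{k_j}|\prod_j|c_{l_j}|, \]
and for odd $I$ with $1\le I\le p-2$ this majorant is finite by hypothesis \eqref{small.divisors.condition} of Lemma \ref{crux.lemma} (the conjugations there being irrelevant under $|\cdot|$). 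Hence $E_{I,1}<\infty$ for all $1\le I\le p-2$.

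For $K\ge 2$ I would induct on $K$. Applying \eqref{g.recursion} with $k=1$ — legitimate since $0<1<K\le I$ — gives $g_{I,K}=\sum_{i=0}^{I}g_{i,1}\odot g_{I-i,K-1}$, with the frequency counts matching on both sides. Combining this with the factorization estimate above yields $E_{I,K}\le\frac{1}{2}\sum_{i=0}^{I}E_{i,1}\,E_{I-i,K-1}$. Each $E_{i,1}$ occurring here is finite by the previous step (as $i\le I\le p-2$), and each $E_{I-i,K-1}$ is finite either trivially (when $I-i<K-1$) or by the inductive hypothesis (since $K-1\ge 1$ and $I-i\le I\le p-2$); the finite sum is then finite, which closes the induction.

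The work here is combinatorial rather than analytic: the part that requires care is verifying the factorization estimate for $\odot$ and checking that the reduction \eqref{g.recursion} never pushes the number of frequencies beyond $p-2$, so that hypothesis \eqref{small.divisors.condition} always applies to the pieces produced. I do not anticipate a genuine obstacle beyond this bookkeeping.
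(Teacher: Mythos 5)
Your proposal is correct and follows essentially the same route as the paper: bound $E_{I,1}$ via Lemma \ref{g.to.h} and hypothesis \eqref{small.divisors.condition}, then use the product identity \eqref{g.recursion} to control $E_{I,K}$ for $K\ge 2$ by a finite sum of products of already-controlled terms. The paper simply states the resulting bound $E_{I,K}\le\sum_{i=0}^{I}E_{i,k}E_{I-i,K-k}$ without spelling out the reindexing argument that converts the symmetric product $\odot$ into a genuine product of sums; your proof makes that step explicit (and obtains the slightly sharper constant $\tfrac12$), but it is the same argument.
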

\begin{proof}
	By Lemma \ref{g.to.h} and \eqref{h.I}, since condition \eqref{small.divisors.condition} of Lemma \ref{crux.lemma} holds for $1\leq I \leq p-2$, $E_{I,1}$ is finite for the same values of $I$. Then \eqref{g.recursion} gives
	\[
		E_{I,K} \leq \sum_{i=0}^{I} E_{i,k}E_{I-i,K-k},
	\]	
	for any $0 < k < K$. The lemma follows. 
\end{proof}

\begin{lemma}
\label{abs.conv.lemma}
	If $\varphi$ obeys the conditions of Lemma \ref{crux.lemma}, then the sum $\mathcal{S}_{I,K}(t)$ is absolutely convergent when $0 \leq K \leq I \leq p$. If, moreover, $I=p$, then
	\[
		\int_{0}^{\infty} \sum_{\substack{k_1, \ldots, k_P =1 \\ l_1, \ldots, l_N = 1}}^\infty |f_{I,K}(\eta; [\phi_{k_j}]_{j=1}^P;[\phi_{l_j}]_{j=1}^N) \prod_{j=1}^{P}\beta_{k_j} \prod_{j=1}^{N}\overline{\beta_{l_j}}| dt \leq \sum_{a=-1}^{1} |\omega_a|E_{p-1,K+a} \sum_{j=1}^{\infty}|c_j|\sigma^p.
	\]
\end{lemma}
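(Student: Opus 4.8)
The plan is to bound $\mathcal S_{I,K}$ in absolute value term by term: control the frequency sums by the error quantities $E_{I,K}$, and — in the case $I=p$ — control the time integral by the generalized Hölder inequality. The three inputs are a uniform sup bound on the $\gamma_j$, the summability $\sum_j|c_j|<\infty$ from condition (3), and finiteness of the $E_{I,K}$. For the first input, note that each $\gamma_j$, being of variation at most $\tau$, has a limit at $+\infty$; since $\gamma_j\in L^p$ that limit is $0$, so $|\gamma_j(t)|=|\gamma_j(t)-\lim_{s\to\infty}\gamma_j(s)|\le\Var(\gamma_j,(t,\infty))\le\tau$ for every $j$ and $t$. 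Using this together with $|e^{iK(\eta t+2\theta(t))}|=1$ (the Pr\"ufer angle $\theta$ being real), one gets for any $0\le K\le I$ the pointwise estimate
\[
	|\mathcal S_{I,K}(t)|\ \le\ \tau^{I}\!\!\sum_{\substack{k_1,\ldots,k_P\ge1\\ l_1,\ldots,l_N\ge1}}\!\!\big|f_{I,K}(\eta;[\phi_{k_j}]_{j=1}^{P};[\phi_{l_j}]_{j=1}^{N})\big|\prod_{j=1}^{P}|c_{k_j}|\prod_{j=1}^{N}|c_{l_j}|,
\]
so the first assertion reduces to showing the right-hand frequency sum is finite.

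To that end I would expand $f_{I,K}=\Xi_{I,K}+\sum_{a=-1}^{1}\omega_a\odot g_{I-1,K+a}$ via \eqref{pretty.f}. Since $|\omega_a|\le1$, averaging over permutations does not increase the absolute value, and the ensuing unrestricted sum over all frequency indices is permutation-invariant, the frequency sum above is bounded by $\tfrac12\big(\sum_j|c_j|\big)\big(E_{I-1,K-1}+E_{I-1,K+1}\big)$, plus the contribution $\sum_j|c_j|$ of the $\Xi$-term in the single case $(I,K)=(1,1)$. For $I\le p-1$ each $E_{I-1,K\pm1}$ is finite by Lemma~\ref{E.finite.lemma} (terms with an out-of-range second index being zero by convention). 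For $I=p$ the relevant errors are $E_{p-1,K\pm1}$, just past the range of Lemma~\ref{E.finite.lemma}; these are still finite because $p-1$ is even, so $g_{p-1,1}=0$, whence iterating $E_{p-1,m}\le\sum_i E_{i,1}E_{p-1-i,m-1}$ (the case $k=1$ of \eqref{g.recursion}) expresses $E_{p-1,K\pm1}$ as a finite combination of products of the $E_{i,1}$ with odd $i\le p-2$, all of which are finite by condition \eqref{small.divisors.condition} together with Lemma~\ref{g.to.h} and \eqref{h.I}. This establishes absolute convergence of $\mathcal S_{I,K}(t)$ for every $0\le K\le I\le p$.

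For the ``moreover'' statement take $I=p$, so $P+N=p$. Interchanging sum and integral (all summands nonnegative), the left side equals $\sum_{\mathrm{indices}}|f_{p,K}|\prod_j|c_{k_j}|\prod_j|c_{l_j}|\int_0^\infty\prod_{j=1}^P|\gamma_{k_j}(t)|\prod_{j=1}^N|\gamma_{l_j}(t)|\,dt$. The inner integral is of a product of exactly $p$ factors, each in $L^p$, so by the generalized Hölder inequality it is at most $\prod_j\|\gamma_{k_j}\|_p\prod_j\|\gamma_{l_j}\|_p$, which is what the statement records as $\sigma^p$, uniformly in the indices. Pulling this factor out and expanding $f_{p,K}=\sum_{a=-1}^{1}\omega_a\odot g_{p-1,K+a}$ (the $\Xi$-term vanishes as $p>1$), each $\omega_a$ with $a=\pm1$ carries exactly one frequency slot, so summing the corresponding coefficient freely yields a factor $\sum_j|c_j|$, while the remaining sum over the $p-1$ slots of $|g_{p-1,K+a}|\prod|c|$ is exactly $\tfrac12E_{p-1,K+a}$; together with $|\omega_0|=0$ this gives the asserted bound $\sum_{a=-1}^{1}|\omega_a|\,E_{p-1,K+a}\,\sum_j|c_j|\,\sigma^p$ (indeed with a spare factor $\tfrac12$).

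The Hölder estimate and the bookkeeping of the symmetric products are routine; the one genuinely delicate point is the finiteness of the ``top-level'' errors $E_{p-1,K\pm1}$, which lie just outside the range $I\le p-2$ covered by Lemma~\ref{E.finite.lemma}. The resolution — that the parity obstruction $g_{p-1,1}=0$ keeps the recursion for these errors from ever invoking the possibly divergent quantity $E_{p-1,1}$ — is exactly what makes the whole $L^p$ iteration terminate, so I would state it explicitly rather than leave it implicit.
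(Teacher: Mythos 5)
Your proof is correct and takes essentially the same route as the paper: bound $|f_{I,K}|$ via \eqref{pretty.f}, multiply by $|\prod\beta_{k_j}\prod\overline{\beta_{l_j}}|\le\tau^I\prod|c_{k_j}|\prod|c_{l_j}|$ (or, for $I=p$, by the H\"older bound on $\int_0^\infty|\prod\gamma|\,dt$), and sum the frequency indices to reduce to finiteness of the $E_{I-1,K\pm1}$ and $\sum_j|c_j|$. The one thing you do that the paper leaves implicit is the observation that for $I=p$ the required quantities $E_{p-1,K\pm1}$ fall just outside the range $1\le K\le I\le p-2$ of Lemma~\ref{E.finite.lemma}, and that they are nevertheless finite because $g_{p-1,1}=0$ by the parity convention (so $E_{p-1,1}=0$), whence the recursion \eqref{g.recursion} expresses $E_{p-1,m}$ in terms of $E_{i,1}$ with odd $i\le p-2$ only; this is a genuine and worthwhile clarification of the paper's terse argument.
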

\begin{proof}
	Taking absolute values in \eqref{pretty.f} gives
	\[
		|f_{I,K}| \leq |\Xi_{I,K}| + \sum_{a=-1}^{1}|\omega_a| \odot |g_{I-1,K+a}|.
	\]
	Multiplying by 
	\[
		|\prod_{j=1}^{P}\beta_{k_j}\prod_{j=1}^{N}\overline{\beta_{l_j}}| \leq |\prod_{j=1}^P c_{k_j} \prod_{j=1}^{N}c_{l_j}|\tau^I
	\]
	and summing in $k_j$ and $l_j$ proves absolute convergence. If $I=p$, we instead multiply by 
	\[
		\int_{0}^{\infty} |\prod_{j=1}^{P}\beta_{k_j}(t)\prod_{j=1}^{N}\overline{\beta_{l_j}(t)}|dt \leq |\prod_{j=1}^{P}c_{k_j}\prod_{j=1}^{N}c_{l_j}|\sigma^p,
	\]
	and summing again in $k_j$ and $l_j$ completes the proof. 
\end{proof}

\begin{lemma}
\label{technical.sum.of.S.lemma}
	For a fixed $I \in \N$, let $P_K := (I+K)/2$, $N_K := (I-K)/2$, and denote \mbox{$\phi_K = \sum_{j=1}^{P_K} \phi_{k_j} - \sum_{j=1}^{N_K} \phi_{l_j}$}, $\Gamma_K = \prod_{j=1}^{P_K}\gamma_{k_j}\prod_{j=1}^{N_K}\overline{\gamma_{l_j}}$, and $C_K = \prod_{j=1}^{P_K}c_{k_j}\prod_{j=1}^{N_K}\overline{c_{l_j}}$. Then,
	\begin{align*}
		&\sum_{K=0}^{I+1} \mathcal{S}_{I+1,K} = \sum_{K=1}^{I} \Big[\sum_{\substack{k_1, \ldots, k_{P_K+1} = 1 \\ l_1, \ldots, l_{N_K}}}^{\infty} C_K c_{k_{P_K+1}}  g_{I,K} e^{i(K+1)(\eta t + 2\theta)}e^{-i(\phi_K + \phi_{k_{P_K}+1})t} \Gamma_K\gamma_{P_K+1}  \\
		&\hspace{1.5in}- \sum_{\substack{k_1, \ldots, k_{P_K}=1 \\ l_1, \ldots, l_{N_K + 1} = 1}}^{\infty} C_K c_{l_{N_K}} g_{I,K} e^{i(K-1)(\eta t + 2\theta)} e^{-i(\phi_K - \phi_{l_{N_K}+1})t} \Gamma_K \overline{\gamma_{l_{N_K+1}}}\Big].
	\end{align*}
\end{lemma}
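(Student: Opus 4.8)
The plan is to unfold $f_{I+1,K}$ via the recursion \eqref{pretty.f}, substitute it into the definition of $\mathcal{S}_{I+1,K}$, sum over all index tuples, and reindex. First I would record that for $I\ge 1$ the term $\Xi_{I+1,K}=\delta_{I}\delta_{K-1}$ vanishes and, since $\omega_0=0$, equation \eqref{pretty.f} collapses to $f_{I+1,K}=\omega_{-1}\odot g_{I,K-1}+\omega_1\odot g_{I,K+1}$, with $\omega_{-1}\equiv 1$ regarded as a function of $1+1+0$ variables and $\omega_1\equiv -1$ as a function of $1+0+1$ variables. Plugging this into $\mathcal{S}_{I+1,K}$ and invoking Lemma \ref{abs.conv.lemma} to rearrange the absolutely convergent sums at will, the task reduces to evaluating, for each $K$, the two sums obtained by pairing $\omega_{\pm1}\odot g_{I,K\mp1}$ against the weights $\prod_j\beta_{k_j}\prod_j\overline{\beta_{l_j}}\,e^{iK(\eta t+2\theta)}$.

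The crucial step is the observation that the symmetric product against a constant collapses once one sums over all index tuples: $\omega_{\pm1}$ carry no frequency dependence, and the weights $\prod_j\beta_{k_j}\prod_j\overline{\beta_{l_j}}$ are invariant under permuting the $k_j$ among themselves and the $l_j$ among themselves, so each of the $P!\,N!$ summands in the definition of $\odot$ contributes the same quantity after relabelling the dummy indices, cancelling the normalisation $1/(P!N!)$. Concretely, with $P'=(I+1+K)/2$ and $N'=(I+1-K)/2$, the first sum equals $\sum_{k,l} g_{I,K-1}(\eta;[\phi_{k_j}]_{j=1}^{P'-1};[\phi_{l_j}]_{j=1}^{N'})\prod_{j=1}^{P'}\beta_{k_j}\prod_{j=1}^{N'}\overline{\beta_{l_j}}$ --- one positive factor $\beta_{k_{P'}}$ being ``free'' --- and the second sum is the analogous expression with $g_{I,K+1}$ missing one \emph{negative} frequency, a free conjugated factor, and an overall minus sign from $\omega_1$.

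Then I would reindex. In the first family put $J=K-1$: since $g_{I,0}=0$ (immediate from \eqref{g.I.K}) and $g_{I,J}=0$ for $J>I$, the range $K\in\{0,\dots,I+1\}$ becomes $J\in\{1,\dots,I\}$, each term landing in $\mathcal{S}_{I+1,J+1}$ and hence carrying the factor $e^{i(J+1)(\eta t+2\theta)}$; expanding $\beta_j=c_je^{-i\phi_jt}\gamma_j$ turns the weight into $C_J\,c_{k_{P_J+1}}\,g_{I,J}\,e^{i(J+1)(\eta t+2\theta)}e^{-i(\phi_J+\phi_{k_{P_J+1}})t}\,\Gamma_J\,\gamma_{k_{P_J+1}}$ in the notation of the statement. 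In the second family put $J=K+1$: the same vanishing conventions force $J\in\{1,\dots,I\}$, each term landing in $\mathcal{S}_{I+1,J-1}$ with exponential $e^{i(J-1)(\eta t+2\theta)}$, an overall minus sign, one extra conjugated factor $\overline{c_{l_{N_J+1}}}\,\overline{\gamma_{l_{N_J+1}}}$, and exponential weight $e^{-i(\phi_J-\phi_{l_{N_J+1}})t}$. Adding the two families and renaming $J\mapsto K$ produces exactly the asserted formula.

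The main obstacle is bookkeeping rather than anything conceptual: one must keep careful track under $\odot$ of which positive (resp.\ negative) index gets split off onto $\omega_{\pm1}$, honour the parity constraint that $g_{I,K}$ vanishes unless $I-K\in 2\Z$, and verify that the boundary values $K\in\{0,1,I,I+1\}$ are absorbed automatically by the conventions $g_{I,0}=0$ and $g_{I,K}=0$ for $K\notin\{0,\dots,I\}$, so that both reindexed families land cleanly on $1\le K\le I$ with no stray correction terms. The only analytic input is the absolute convergence supplied by Lemma \ref{abs.conv.lemma}, which legitimises interchanging the infinite index sums with the finite $K$-sum and with the splitting of $f_{I+1,K}$.
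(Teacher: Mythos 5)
Your proposal is correct and follows essentially the same route as the paper's proof: expand $f_{I+1,K}$ via \eqref{f.I.K}, use the symmetry of $\prod\beta_{k_j}\prod\overline{\beta_{l_j}}$ under the index sums to collapse the permutation averaging in the symmetric product, and regroup the double sum by the index $\tilde K$ of the surviving $g_{I,\tilde K}$ (equivalently, your reindexings $J=K\mp1$). The explanation of why the $1/(P!N!)$ normalization cancels is stated a bit more explicitly in your write-up than in the paper, but the underlying mechanism is identical.
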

\begin{proof}
	Beginning from 
	\begin{align*}
		\sum_{K=0}^{I+1} \mathcal{S}_{I+1,K} &= \sum_{K=0}^{I+1} \sum_{\substack{k_1, \ldots, k_{P_K} = 1 \\ l_1, \ldots, l_{N_K} =1}}^\infty f_{I+1,K}(\eta; [\phi_{k_j}]_{j=1}^{P_K}; [\phi_{l_j}]_{j=1}^{N_K}) \prod_{j=1}^{P_K} \beta_{k_j}(x) \prod_{j=1}^{N_K} \overline{\beta_{l_j}(x)} e^{iK(\eta x + 2\theta(x))},
	\end{align*}
	we use \eqref{f.I.K} to rewrite as
	\begin{align*}
		&\sum_{K=0}^{I+1} \sum_{\substack{k_1, \ldots, k_{P_K} = 1 \\ l_1, \ldots, l_{N_K} =1}}^\infty \frac{1}{P_K!N_K!} \sum_{a=-1}^{1} \sum_{\substack{\sigma \in S_{P_K} \\ \tau \in S_{N_K}}} \omega_a g_{I, K + a}(\eta; [\phi_{k_{\sigma(j)}}]_{j=1}^{\min[P_K, P_K+ a]}; [\phi_{l_{\tau(j)}}]_{j=1}^{\min[N_K, N_K-a]}) \\
		&\hspace{2in}\times \prod_{j=1}^{P_K} \beta_{k_j}(x) \prod_{j=1}^{N_K} \overline{\beta_{l_j}(x)} e^{iK(\eta x + 2\theta(x))}.
	\end{align*}
	Grouping terms with $g_{I,\tilde{K}}$ of the same indices $(I,\tilde{K})$, each such group has two summands, one from the case where $K=\tilde{K}+1$ and $a = -1$ and another from the case where $K = \tilde{K}-1$ and $a = 1$. Each such term has the form
	\begin{align*}
		&\sum_{\substack{k_1, \ldots, k_{P_K+1} = 1 \\ l_1, \ldots, l_{N_K-1} = 1}}^{\infty} \frac{1}{(P_K + 1)!(N_K - 1)!} \sum_{\substack{\sigma \in S_{P_K + 1} \\ \tau \in S_{N_K - 1}}} g_{I,1}(\eta; [\phi_{k_{\sigma(j)}}]_{j=1}^{P_K}; [\phi_{l_{\tau(j)}}]_{j=1}^{N_K - 1}) \prod_{j=1}^{P_K + 1} \beta_{k_j} \prod_{j=1}^{N_K-1} \overline{\beta_{l_j}} \\ 
		&\hspace{.5in} - \sum_{\substack{k_1, \ldots, k_{P_{K}} = 1 \\ l_1, \ldots, l_{N_K} = 1}}^{\infty} \frac{1}{P_K!N_K!} \sum_{\substack{\sigma \in S_{P_K} \\ \tau \in S_{N_K}}} g_{I,1}(\eta; [\phi_{k_{\sigma(j)}}]_{j=1}^{P_K}; [\phi_{l_{\tau(j)}}]_{j=1}^{N_K - 1}) \prod_{j=1}^{P_K}\beta_{k_j} \prod_{j=1}^{N_K} \overline{\beta_{l_j}}. 
	\end{align*}
	By choosing to average over permutations of the frequencies in the first summand of the lemma, and summing in $K$ above, we have equality and complete the proof. 
\end{proof}

\begin{lemma}
	If $\varphi$ obeys the conditions of Lemma \ref{crux.lemma}, then for $I = 1, \ldots, p-1$ and $0 \leq a < b < \infty$, 
	\begin{equation}
		\Big|\int_{a}^{b} \big(\sum_{K=1}^{I}\mathcal{S}_{I,K}(t) - \sum_{K=0}^{I+1} \mathcal{S}_{I+1,K}(t) \big)dt\Big| \leq \sum_{K=1}^{I} \frac{1}{K}E_{I,K} \tau^I. \label{telescoping.lemma}
	\end{equation}
\end{lemma}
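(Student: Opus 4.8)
The strategy is to recognize the integrand $\sum_{K=1}^{I}\mathcal{S}_{I,K}(t)-\sum_{K=0}^{I+1}\mathcal{S}_{I+1,K}(t)$, term by term in $K$ and in the multi-index, as a constant multiple of an integrand of the form handled by Lemma~\ref{exchange.lemma}, and then to integrate and sum. Fix $K$ with $1\le K\le I$ and a multi-index $(k_1,\dots,k_{P_K},l_1,\dots,l_{N_K})$ with $P_K=(I+K)/2$, $N_K=(I-K)/2$; write $C_K=\prod_j c_{k_j}\prod_j\overline{c_{l_j}}$, $\Gamma_K=\prod_j\gamma_{k_j}\prod_j\overline{\gamma_{l_j}}$, and $\phi_K=\sum_j\phi_{k_j}-\sum_j\phi_{l_j}$, so that the associated summand of $\mathcal{S}_{I,K}(t)$ is $f_{I,K}\,C_K\,e^{iK(\eta t+2\theta)}e^{-i\phi_K t}\Gamma_K(t)$. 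Using $f_{I,K}=\tfrac{\phi_K-K\eta}{iK}\,g_{I,K}$ from \eqref{g.I.K}, this is exactly $\tfrac{g_{I,K}C_K}{iK}$ times the first summand $(\phi_K-K\eta)e^{iK(\eta t+2\theta)}e^{-i\phi_K t}\Gamma_K$ of the Lemma~\ref{exchange.lemma} integrand.

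For the other summand of that integrand I would substitute \eqref{theta.prime} in the form $\partial_x\theta=\tfrac{i}{2}\bigl(e^{i(\eta t+2\theta)}\varphi(t)-\overline{e^{i(\eta t+2\theta)}\varphi(t)}\bigr)$ and expand $\varphi(t)=\sum_j c_j e^{-i\phi_j t}\gamma_j(t)$. After multiplying by $\tfrac{g_{I,K}C_K}{iK}$, the term $-2K\,e^{iK(\eta t+2\theta)}e^{-i\phi_K t}\Gamma_K\,\partial_x\theta$ becomes a sum over $j$ of two families: one in which the number of $e^{i(\eta t+2\theta)}$ factors is raised to $K+1$ and an extra factor $c_j e^{-i\phi_j t}\gamma_j(t)$ is appended, and one in which it is lowered to $K-1$ and an extra factor $\overline{c_j}\,e^{i\phi_j t}\overline{\gamma_j(t)}$ is appended. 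Renaming the new index ($k_{P_K+1}=j$ in the first family, $l_{N_K+1}=j$ in the second), these sums over $j$, summed also over the multi-index and over $K=1,\dots,I$, are precisely the two groups of terms that Lemma~\ref{technical.sum.of.S.lemma} identifies with $\sum_{K=0}^{I+1}\mathcal{S}_{I+1,K}(t)$ — the endpoint $K=I$ supplying $\mathcal{S}_{I+1,I+1}$ and $K=1$ supplying $\mathcal{S}_{I+1,0}$. Writing $\hat T_{I,K}^{(\mathrm{m.i.})}(t)$ for the Lemma~\ref{exchange.lemma} integrand attached to this $K$ and multi-index, we obtain, for each fixed $K$ and multi-index, the identity
\[
  f_{I,K}C_K e^{iK(\eta t+2\theta)}e^{-i\phi_K t}\Gamma_K - g_{I,K}C_K\!\sum_{j}\!\Bigl(c_j e^{i(K+1)(\eta t+2\theta)}e^{-i(\phi_K+\phi_j)t}\Gamma_K\gamma_j - \overline{c_j}\,e^{i(K-1)(\eta t+2\theta)}e^{-i(\phi_K-\phi_j)t}\Gamma_K\overline{\gamma_j}\Bigr)=\frac{g_{I,K}C_K}{iK}\,\hat T_{I,K}^{(\mathrm{m.i.})}(t),
\]
and summing over $K$ and the multi-index recovers $\sum_{K=1}^{I}\mathcal{S}_{I,K}(t)-\sum_{K=0}^{I+1}\mathcal{S}_{I+1,K}(t)$ by Lemma~\ref{technical.sum.of.S.lemma}.

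Now I would integrate over $[a,b]$. Here one must be careful: Lemma~\ref{exchange.lemma} only provides $|\int_a^b\hat T_{I,K}^{(\mathrm{m.i.})}|\le 2\tau^I$, which rests on cancellation inside the integral, so the integration must be performed at the level of the individual $\mathcal{S}$-summands, which \emph{are} pointwise dominated on $[a,b]$ by summable quantities: using $|\gamma_j(t)|\le\tau$ one bounds each such summand by $|f_{I,K}C_K|\tau^I$ (resp.\ $|g_{I,K}C_K\,c_j|\tau^{I+1}$), and Lemmas~\ref{E.finite.lemma} and~\ref{abs.conv.lemma} together with $\sum_j|c_j|<\infty$ make the resulting sums over $K$, the multi-index, and $j$ finite and $t$-uniform — here $E_{I,K}$ is finite for all $1\le K\le I\le p-1$, because for $I=p-1=2n$ only even $K\ge 2$ contribute and the recursion $E_{I,K}\le\sum_i E_{i,k}E_{I-i,K-k}$ from Lemma~\ref{E.finite.lemma} closes on strictly smaller first indices. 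Thus $\int_a^b$ commutes with all the sums, and for each fixed $K$ and multi-index the integrated difference equals $\tfrac{g_{I,K}C_K}{iK}\int_a^b\hat T_{I,K}^{(\mathrm{m.i.})}(t)\,dt$, of absolute value at most $\tfrac{|g_{I,K}C_K|}{K}\,2\tau^I$ by Lemma~\ref{exchange.lemma}. Summing over the multi-index gives $\tfrac{1}{K}E_{I,K}\tau^I$ by the definition of $E_{I,K}$, and summing over $K=1,\dots,I$ yields the claimed bound.

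The step I expect to be the main obstacle is the combinatorial bookkeeping of the second paragraph: verifying that the $K\pm1$ families produced by differentiating $\theta$ match exactly, with the permutation-averaging built into the $\odot$-product and into $\mathcal{S}_{I,K}$, the grouped sum of Lemma~\ref{technical.sum.of.S.lemma}, and that the boundary cases $K=I$ and $K=1$ are accounted for so that no term of $\sum_{K=0}^{I+1}\mathcal{S}_{I+1,K}$ is missed or double-counted. Once that identity is in place, the remaining work — establishing the uniform domination, interchanging sum and integral at the right level, and invoking Lemma~\ref{exchange.lemma} termwise — is routine.
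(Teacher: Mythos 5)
Your proposal is correct and is essentially the paper's own argument, just run in the reverse direction: the paper starts from Lemma~\ref{exchange.lemma}, multiplies by $g_{I,K}/(iK)$ and by $C_K$, sums over the multi-index (justified by Fubini and Lemmas~\ref{E.finite.lemma}, \ref{abs.conv.lemma}), sums over $K$, and then identifies the $\theta'$-bearing pieces with $\sum_{K}\mathcal{S}_{I+1,K}$ via \eqref{theta.prime} and Lemma~\ref{technical.sum.of.S.lemma}; you instead begin from the integrand and recognize it as that same rescaling of the Lemma~\ref{exchange.lemma} integrand, which amounts to the identical chain of manipulations. One thing you do that the paper leaves implicit: Lemma~\ref{E.finite.lemma} as stated only covers $I\le p-2$, and you correctly point out that for $I=p-1$ (even) only even $K\ge2$ contribute, so the recursion $E_{I,K}\le\sum_i E_{i,k}E_{I-i,K-k}$ with $k=1$ closes on strictly smaller odd first indices $\le p-2$, so $E_{p-1,K}$ is indeed finite for the relevant $K$; that is a genuine (minor) gap in the paper's bookkeeping that your write-up repairs.
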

\begin{proof}
	For $K \geq 1$, we start from Lemma \ref{exchange.lemma} and multiply by $\frac{g_{I,K}}{iK}$to get
	\begin{align*}
		\big|\int_{a}^{b} f_{I,K} e^{iK(\eta t + 2\theta)}e^{-i\phi t} \Gamma + 2i g_{I,K} e^{iK(\eta t + 2\theta)} e^{-i\phi t} \Gamma \theta'\big| \leq \frac{2\tau^I}{K} |g_{I,K}|.
	\end{align*}
	Then we multiply through by $\prod_{j=1}^{P}c_{k_j}\prod_{j=1}^{N}\overline{c_{l_j}}$ and sum in all the $k_j$ and $l_j$ from one to infinity, which summation is justified by Fubini's theorem and Lemmas \ref{E.finite.lemma} and \ref{abs.conv.lemma}. Then summing in $K$ from one to $I$ finishes the proof. The term containing the $g_{I,K}$ becomes $\mathcal{S}_{I+1,K}$ using \eqref{theta.prime} and Lemma \ref{technical.sum.of.S.lemma}. 
\end{proof}

\begin{proof}[Proof of Lemma \ref{crux.lemma}]
	By summing over $I = 1, \ldots, p-1$ in \eqref{telescoping.lemma}, we have
	\begin{align*}
		\Big|\int_{a}^{b} \big(\mathcal{S}_{1,1}(t) - \sum_{K=1}^{p} \mathcal{S}_{p,K}(t) - \sum_{I=2}^{p} \mathcal{S}_{I,0}(t)\big)dt\Big| \leq \sum_{I=1}^{p-1}\sum_{K=1}^{I} \frac{1}{K} E_{I,K}\tau^I.
	\end{align*}
	Then, for $I=p$, we bound the sum in $K$ by
	\begin{align*}
		\big|\sum_{K=1}^{p} \int_{a}^{b} \mathcal{S}_{p,K}(t) dt\big| &\leq \sum_{K=1}^{p} \int_{a}^{b} \sum_{\substack{k_1, \ldots, k_{\frac{p+K}{2}}=1 \\ l_1, \ldots, l_{\frac{p-K}{2}} = 1}}^{\infty} |f_{p,K}\prod_{j=1}^{\frac{p+K}{2}} \beta_{k_j} \prod_{j=1}^{\frac{p-K}{2}} \overline{\beta_{l_{j}}} |dt \\
		&\leq \sum_{K=1}^{p} \sum_{a=-1}^{1} |\omega_a| E_{p-1, K+a} \sum_{j=1}^{\infty} |c_j| \sigma^p \leq 2\sum_{K=0}^{p-1} E_{p-1,K} \sum_{j=1}^\infty |c_j|\sigma^p. 
	\end{align*}
	The term we have yet to bound, $\sum_{I=2}^{p} \mathcal{S}_{I,0}(t)$, is independent of the Pr\"ufer variables, since $K = 0$. By condition \eqref{small.divisors.condition} of Lemma \ref{crux.lemma}, we can write the Pr\"ufer variables for two linearly independent solutions $U$ and $V$ as follows:
	\begin{align*}
		\log \frac{Z_U(x,\eta)}{Z_U(0,\eta)} &= \sum_{I=2}^{p} \int_{0}^{x}\mathcal{S}_{I,0}(t)dt + B_U(x), \\
		\log \frac{Z_V(x,\eta)}{Z_V(0,\eta)} &= \sum_{I=2}^{p} \int_{0}^{x}\mathcal{S}_{I,0}(t)dt + B_V(x),
	\end{align*}
	where the terms $B_U$ and $B_V$ are bounded functions that depend on $U$ and $V$, respectively, whereas $\sum_{I=2}^{p} \int_{0}^{x}\mathcal{S}_{I,0}(t)dt$ does not depend on the solution $U$ or $V$. Thus the difference is given by 
	\[
		\log \frac{Z_U(x,\eta)}{Z_V(x,\eta)} - \log \frac{Z_U(0,\eta)}{Z_V(0,\eta)} = B_U(x) - B_V(x).
	\]
	Since $B_U - B_V$ has a finite limit at infinity, taking the real part shows that $\log \frac{r_U(x,\eta)}{r_V(x,\eta)}$ also has a finite limit at infinity, and thus $\frac{r_U(x,\eta)}{r_V(x,\eta)}$ has a finite, nonzero limit at infinity. This holds independently of the choice of $U$ and $V$.  

	Note that the difference $B_U - B_V$ is absolutely continuous, since it may be written as 
	\[
		B_U(x) - B_V(x) = \log \frac{Z_U(x,\eta)}{Z_U(0,\eta)} - \log \frac{Z_V(x,\eta)}{Z_V(0,\eta)} = \int_{0}^{x} e^{i\eta t} \varphi(t) (e^{2i\theta_U(t,\eta)} - e^{2i\theta_V(t,\eta)}) dt.
	\]
	Since $B_U - B_V$ has a finite limit at infinity, there exists some $0 < x_0 < \infty$ for which 
	\[
		|\int_{x_0}^{\infty} (B_U(t) - B_V(t))' dt | < \frac{\pi}{4}.
	\]
	Taking the imaginary part then gives that $|\theta_U(x,\eta) - \theta_V(x,\eta) - (\theta_U(0,\eta) - \theta_V(0,\eta))|$ is bounded by $\pi/4$ for $x \geq x_0$. In particular, if we choose the solution $U$ arbitrarily and then choose $V$ such that $\theta_V(0,\eta) = \theta_U(0,\eta) - \pi/2 + 2k\pi$ for some $k \in \Z$, then for $x \geq x_0$, we have
	\[
		\theta_U(x,\eta) - \theta_V(x,\eta) \in (\frac{\pi}{4}, \frac{3\pi}{4}). 
	\]
	In particular, $\sin(\theta_U(x,\eta) - \theta_V(x,\eta)) \in (\frac{\sqrt{2}}{2}, 1]$. Since $U$ and $V$ are linearly independent, their Wronskian is independent of $x$ and is given by some nonzero constant $C$. We write
	\[
		W[U,V](x) = 4 r_U(x,\eta)r_V(x,\eta)\sin(\theta_U(x,\eta) - \theta_V(x,\eta)) = C \neq 0.
	\]
	Multiplying through by $\frac{r_U(x,\eta)}{r_V(x,\eta)}$, the boundedness of $\sin(\theta_U(x,\eta) - \theta_V(x,\eta))$ away from zero implies $r_U^2(x,\eta)$, and therefore $r_U(x,\eta)$, has a finite limit at infinity. Since $U,V$ were chosen arbitrarily, every solution of \eqref{eigenequation} is bounded. 
\end{proof}

\begin{remark}
It is quicker to merely show absence of subordinate solutions, rather than to show boundedness of solutions. If $U$ were a subordinate solution, then for any linearly independent solution $V$ we would have, by L'H\^opital's rule, 
	\begin{align*}
		0 &= \xlim \frac{\int_{0}^{x}\|U(t,\eta)\|^2dt}{\int_{0}^{x}\|V(t,\eta)\|^2dt} = \xlim \frac{\|U(x,\eta)\|^2}{\|V(x,\eta)\|^2} = \xlim \frac{|r_U(x,\eta)|^2}{|r_V(x,\eta)|^2},
	\end{align*}
which contradicts the fact from the first part of the proof that $\frac{r_U(x,\eta)}{r_V(x,\eta)} \to L$ for some nonzero $L$.
\end{remark}

For the following lemma, recall that the Catalan numbers $C_n$ are given by $C_n = \frac{1}{n+1}\binom{2n}{n}$. 

\begin{lemma}
\label{UBH.lemma}
Let $\nu$ be a finite uniformly $\beta$-H\"older measure on $\R$. 
\begin{enumerate}
	\item If $\alpha \in (0,\beta)$, then for all $\psi \in \R$, 
	\begin{equation}
		\int \frac{1}{|\psi - \eta|^\alpha}d\nu(\eta) \leq D_{\alpha}, \label{beta.holder.measure.part.i}
	\end{equation}
	where $D_\alpha$ is a finite constant that depends only on $\alpha$. 
	\item For $I\geq 1$, $I=2P-1$, and $\alpha \in (0,\frac\beta I)$, 
	\begin{equation}
		\int |h_I(\eta; [\phi_{k_j}]_{j=1}^P; [\phi_{l_j}]_{j=1}^{P-1}) |^\alpha d\nu(\eta) \leq C_I D_{I\alpha}, \label{beta.holder.measure.part.ii}
	\end{equation} 
	where $C_I$ are Catalan numbers.
\end{enumerate}
\end{lemma}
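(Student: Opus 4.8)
The plan is to prove part (i) by a standard dyadic decomposition of the integral according to the distance $|\psi-\eta|$, and then to prove part (ii) by induction on $P$ using the recursive definition \eqref{h.I} of $h_I$, splitting the H\"older exponent multiplicatively across the two factors $h_m$ and $h_{I-1-m}$ and reducing the single ``new'' pole $\tfrac{1}{\sum\phi_{k_j}-\sum\phi_{l_j}-\eta}$ to part (i).

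For part (i): fix $\psi$ and $\alpha \in (0,\beta)$. Partition $\R$ into the annuli $A_n = \{\eta : 2^{-n-1} \le |\psi-\eta| < 2^{-n}\}$ for $n \ge 0$ together with the far region $A_{-1} = \{\eta : |\psi-\eta| \ge 1\}$. On $A_n$ with $n\ge 0$ one has $|\psi-\eta|^{-\alpha} \le 2^{(n+1)\alpha}$, while $\nu(A_n) \le \nu\big((\psi-2^{-n},\psi+2^{-n})\big) \le c\, 2^{-\beta n}$ by uniform $\beta$-H\"older continuity; summing the geometric series $\sum_n 2^{(n+1)\alpha} 2^{-\beta n}$, which converges precisely because $\alpha < \beta$, controls this part. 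On $A_{-1}$ the integrand is bounded by $1$ and $\nu$ is finite, so this contributes at most $\nu(\R)$. Adding gives \eqref{beta.holder.measure.part.i} with $D_\alpha$ depending only on $\alpha$ (and on the fixed H\"older constant and $\nu(\R)$, which we absorb).

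For part (ii): induct on $P$, where $I = 2P-1$. The base case $P=1$ is $h_1(\eta;[\phi_{k_1}]) = (\phi_{k_1}-\eta)^{-1}$, so \eqref{beta.holder.measure.part.ii} with $C_1 = 1$ is exactly part (i) applied at $\psi = \phi_{k_1}$, with $\alpha \in (0,\beta)$. For the inductive step, use \eqref{h.I} to write $h_I = \tfrac{1}{\phi-\eta}\sum_{m=0}^{I-1} h_m \cdot h_{I-1-m}$ where $\phi = \sum_{j=1}^P \phi_{k_j} - \sum_{j=1}^{P-1}\phi_{l_j}$ and only odd $m$ contribute (even $h$'s vanish); write $m = 2Q-1$, so $I-1-m = 2(P-Q)-1$. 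Take $\alpha$-th powers, apply the triangle inequality for $|\cdot|^\alpha$ with $\alpha < 1$ to pull the sum outside, and then on each term apply the three-factor H\"older inequality with exponents $\tfrac{1}{3\alpha/\beta}$ split so that the $\tfrac{1}{|\phi-\eta|^\alpha}$ factor is integrated against $\nu$ at power scaled to lie in $(0,\beta)$, the $|h_m|^\alpha$ factor at power scaled to lie in $(0,\tfrac{\beta}{m})$, and the $|h_{I-1-m}|^\alpha$ factor at power scaled to lie in $(0,\tfrac{\beta}{I-1-m})$; here the hypothesis $\alpha < \tfrac{\beta}{I}$ is exactly what makes all three rescaled exponents admissible after one notes $m + (I-1-m) + 1 = I$. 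Each of the three resulting integrals is bounded by part (i) or by the inductive hypothesis, yielding a bound of the form $D_{I\alpha}$ times $\sum_{Q=1}^{P} C_{Q-1} C_{P-Q}$; since $\sum_{Q=1}^{P} C_{Q-1}C_{P-Q} = C_P$ is the Catalan recursion, we get exactly $C_I D_{I\alpha}$ after reindexing $C_P =: C_I$ (matching the indexing convention $I = 2P-1$ used in the statement).

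The main obstacle is bookkeeping the H\"older exponents in the inductive step: one must choose the split of the product H\"older exponents and verify that each rescaled power stays strictly below the corresponding H\"older threshold for all $m$ in the range simultaneously, which is where the precise hypothesis $\alpha \in (0,\beta/I)$ enters and cannot be weakened. A cleaner bookkeeping, which I would use if the $L^{3}$-type split is awkward, is to instead bound $|h_I|^\alpha \le |\phi-\eta|^{-\alpha}\sum_m |h_m|^\alpha |h_{I-1-m}|^\alpha$ and then apply H\"older with exponents proportional to $(1, m, I-1-m)$ normalized to sum to $I$, integrating each factor against $\nu$; the constants $D$ that appear only need to depend on $\alpha$, $I$, $\nu(\R)$ and the H\"older constant, all of which we are free to absorb into $D_{I\alpha}$, so tracking their exact values is unnecessary.
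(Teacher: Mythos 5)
Your approach matches the paper's: part (i) is the standard dyadic estimate (the paper simply cites \cite[Lemma 4.1]{L2014}), and part (ii) is an induction on $P$ via \eqref{h.I} with a three-factor H\"older split using weights $(1,m,I-1-m)/I$, so that $\alpha<\beta/I$ makes all three rescaled exponents admissible. One bookkeeping slip worth fixing: the nontrivial $m$'s run over $m=2Q-1$ for $Q=1,\dots,P-1$ (not $P$), and if you index the inductive bound by the $P$-value the sum gives at most $\sum_{Q=1}^{P-1}C_Q C_{P-Q}$, which is \emph{not} the Catalan number $C_I$; the relation ``$C_P=:C_I$'' you propose is not an equality of Catalan numbers ($C_P < C_{2P-1}$). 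The paper sidesteps this by summing over \emph{all} $0\le m\le I-1$, absorbing the even-$m$ terms (where $h_m\equiv 0$) at no cost into the crude bound $C_mC_{I-1-m}D_{I\alpha}$, so the Catalan recursion $\sum_{m=0}^{I-1}C_mC_{I-1-m}=C_I$ directly produces the stated constant $C_I$; your argument does prove the lemma since the bound you obtain is even smaller, but the final identification should be stated this way to match the lemma's constant.
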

\begin{proof} \
\begin{enumerate}
	\item This is proved in ~\cite[Lemma 4.1]{L2014}.
	\item We induct on $P$. The case $P=1$ holds by \eqref{beta.holder.measure.part.i}. Suppose the statement holds up to $P-1$. Integrating one summand in \eqref{h.I} and using H\"older's inequality and the induction hypothesis gives
	\begin{align*}
		\int \Big|\frac{1}{\sum_{j=1}^{P}\phi_{k_j}-\sum_{j=1}^{P-1}\phi_{l_j} - \eta} h_m h_{I-1-m}\Big|^\alpha d\nu(\eta) &\leq D_{I\alpha}^{1/I} (C_mD_{I\alpha})^{m/I} (C_{I-1-m}D_{I\alpha})^{(I-1-m)/I} \\
		&\leq C_mC_{I-1-m} D_{I\alpha},
	\end{align*}
	and since the Catalan sequence obeys the recursion $C_n = \sum_{j=0}^{n-1} C_jC_{n-1-j}$, summing in $0 \leq m \leq I-1$ recovers the Catalan number $C_I$.  
\end{enumerate}
\end{proof}

\begin{lemma}
\label{Hdim.bound.lemma}
Suppose \eqref{alpha.type.decay.condition} holds and let $I = 2P-1$. Then the set of $\eta$ for which 
\[
	\sum_{\substack{k_1, \ldots, k_P =1 \\ l_1, \ldots, l_{P-1} = 1}}^\infty \Big|h_I(\eta; [\phi_{k_j}]_{j=1}^P; [\phi_{l_j}]_{j=1}^{P-1})\prod_{j=1}^{P}c_{k_j} \prod_{j=1}^{N} \overline{c_{l_j}}\Big|\tau^I = \infty, 
\]
has Hausdorff dimension at most $I\alpha$. If $I = 2P$, the same set is empty.
\end{lemma}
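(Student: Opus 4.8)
The plan is to reduce the Hausdorff-dimension estimate to a statement about uniformly H\"older measures via Frostman's lemma and then feed in Lemma~\ref{UBH.lemma}. The even case $I=2P$ is immediate: $h_I\equiv 0$ by definition for even $I$, so the displayed sum vanishes identically, is never $\infty$, and the set in question is empty. So assume $I=2P-1$ is odd and write
\[
F(\eta) := \sum_{\substack{k_1,\dots,k_P=1\\ l_1,\dots,l_{P-1}=1}}^\infty \big|h_I(\eta; [\phi_{k_j}]_{j=1}^P; [\phi_{l_j}]_{j=1}^{P-1})\big| \prod_{j=1}^P |c_{k_j}| \prod_{j=1}^{P-1} |c_{l_j}|\, \tau^I,
\]
so that the set to be controlled is $B := \{\eta\in\R : F(\eta)=\infty\}$.

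First I would record that $B$ is Borel: each $h_I(\,\cdot\,;\cdots)$ is a rational function of $\eta$ (from the recursion \eqref{h.I}), so $|h_I(\,\cdot\,;\cdots)|$, extended by $+\infty$ at its finitely many poles, is lower semicontinuous; $F$ is an increasing limit of finite sums of such functions, hence lower semicontinuous, so $B=\bigcap_{n\ge1}\{F>n\}$ is a $G_\delta$. Fix any $s>I\alpha$. If $\mathcal{H}^s(B)>0$, Frostman's lemma yields a nontrivial finite (compactly supported) measure $\nu$ on $B$ that is uniformly $s$-H\"older; I then estimate $\int F^\alpha\,d\nu$. Since $\alpha\in(0,\tfrac1{p-2})\subset(0,1)$, subadditivity of $t\mapsto t^\alpha$ gives
\[
F(\eta)^\alpha \le \tau^{I\alpha}\sum_{\substack{k_1,\dots,k_P\\ l_1,\dots,l_{P-1}}} \big|h_I(\eta;\cdots)\big|^\alpha \prod_{j=1}^P |c_{k_j}|^\alpha \prod_{j=1}^{P-1} |c_{l_j}|^\alpha .
\]
Integrating against $\nu$ (interchanging sum and integral by Tonelli, all terms being nonnegative) and applying Lemma~\ref{UBH.lemma}(ii)—valid because $\alpha<s/I$—to bound $\int |h_I(\eta;\cdots)|^\alpha\,d\nu\le C_I D_{I\alpha}$ uniformly in the frequency tuples, one obtains
\[
\int F^\alpha\,d\nu \le \tau^{I\alpha}\,C_I D_{I\alpha}\sum_{\substack{k_1,\dots,k_P\\ l_1,\dots,l_{P-1}}} \prod_{j=1}^P |c_{k_j}|^\alpha \prod_{j=1}^{P-1}|c_{l_j}|^\alpha = \tau^{I\alpha}\,C_I D_{I\alpha}\Big(\sum_{j=1}^\infty |c_j|^\alpha\Big)^{I},
\]
which is finite by \eqref{alpha.type.decay.condition} (there are $P+(P-1)=I$ factors of $\sum_j|c_j|^\alpha$). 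Hence $F<\infty$ $\nu$-a.e., so $\nu(B)=0$, contradicting nontriviality of $\nu$. Therefore $\mathcal{H}^s(B)=0$ for every $s>I\alpha$, i.e. $\dim_H B\le I\alpha$.

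The only delicate point—and the closest thing to an obstacle—is the Frostman step: one must verify that $B$ is Borel (handled by the lower semicontinuity argument, using rationality of $h_I$ in $\eta$) and must invoke Frostman's lemma in precisely the form ``$\mathcal{H}^s(B)>0$ implies the existence of a nontrivial finite uniformly $s$-H\"older measure supported on $B$,'' so that the hypothesis of Lemma~\ref{UBH.lemma} is matched verbatim, with $s$ playing the role of $\beta$ and $\alpha<s/I$. The remaining ingredients—subadditivity of $t\mapsto t^\alpha$ (available since $\alpha<1$), Tonelli's theorem, and factoring the frequency sum as $(\sum_j|c_j|^\alpha)^I$—are routine.
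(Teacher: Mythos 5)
Your proof is correct and follows essentially the same route as the paper: reduce the Hausdorff-dimension bound to a statement about uniformly H\"older measures (Frostman), feed in Lemma~\ref{UBH.lemma}(ii), and sum the $|c_j|^\alpha$ using \eqref{alpha.type.decay.condition} to obtain $(\sum_j|c_j|^\alpha)^I$. The paper extracts the H\"older measure as a restriction $\chi_{T'}h^\beta$ of Hausdorff measure to a suitable subset and integrates $\sum|h_I\cdot c|^\alpha$ directly (then passes from $\alpha$-power summability to summability using $\alpha\le1$), whereas you pull the $\alpha$-power outside via subadditivity of $t\mapsto t^\alpha$ and integrate $F^\alpha$; these are equivalent, and your explicit check that the exceptional set is Borel and your retention of the Catalan factor $C_I$ (which the paper's final display silently drops) are small points in your version's favor.
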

\begin{proof}
Let $T$ be the set of $\eta$ for which condition \eqref{small.divisors.condition} of Lemma \ref{crux.lemma} fails. Suppose the Hausdorff dimension of $T$ is greater than $i\alpha$. Then for some $\beta > i\alpha$, $h^\beta(T)=\infty$. This implies the existence of a subset $T'\subset T$ such that $\nu = \chi_{T'}h^\beta$ is a finite uniformly $\beta$-H\"older measure with $\nu(T)>0$. Then Lemma \ref{UBH.lemma} implies 
\begin{align*}
	&\int \sum_{\substack{k_1, \ldots, k_{\frac{i+1}{2}}=1 \\ l_1, \ldots, l_{\frac{i-1}{2}} =1}}^\infty |h_i(\eta; [\phi_{k_j}]_{j=1}^{\frac{i+1}{2}}; [\phi_{l_j}]_{j=1}^{\frac{i-1}{2}})\prod_{j=1}^{\frac{i+1}{2}} c_{k_j} \prod_{j=1}^{\frac{i-1}{2}} \overline{c_{l_j}}|^\alpha d\nu(\eta) \\
	&\hspace{1in}= \sum_{\substack{k_1, \ldots, k_{\frac{i+1}{2}}=1 \\ l_1, \ldots, l_{\frac{i-1}{2}}=1}}^\infty |\prod_{j=1}^{\frac{i+1}{2}} c_{k_j} \prod_{j=1}^{\frac{i-1}{2}} c_{l_j}|^\alpha \int |h_i(\eta; [\phi_{k_j}]_{j=1}^{\frac{i+1}{2}}; [\phi_{l_j}]_{j=1}^{\frac{i-1}{2}})|^\alpha d\nu(\eta) \\
	&\hspace{1in}\leq \sum_{\substack{k_1, \ldots, k_{\frac{i+1}{2}}=1 \\ l_1, \ldots, l_{\frac{i-1}{2}}=1}}^\infty |\prod_{j=1}^{\frac{i+1}{2}} c_{k_j} \prod_{j=1}^{\frac{i-1}{2}} c_{l_j}|^\alpha D_{i\alpha} = D_{i\alpha}\Big(\sum_{j =1}^\infty |c_j|^\alpha\Big)^i,
\end{align*}
which is finite by the $\alpha$-type decay condition \eqref{alpha.type.decay.condition}. Since the integral is finite, the integrand is $\nu$-almost everywhere finite. But since $\alpha \in (0,1]$, this implies that condition \eqref{small.divisors.condition} holds $\nu$-almost everywhere, so that $\nu(T) = 0$, a contradiction. The second statement of the lemma follows from the fact that $h_I$ is zero for even $I$. 
\end{proof}

\begin{proof}[Proof of Theorem \ref{infinite.main.thm}]
Conditions \eqref{uniformly.b.v.condition}, \eqref{uniformly.L.p.condition}, and \eqref{alpha.type.decay.condition} of Lemma \ref{crux.lemma} are trivially satisfied for every $\eta$. Condition \eqref{small.divisors.condition} is satisfied away from a set $T$ of Hausdorff dimension at most $(p-2)\alpha$ by Lemma \ref{Hdim.bound.lemma} (note that condition \eqref{small.divisors.condition} of Lemma \ref{crux.lemma} need only be satisfied for odd $I =1, \ldots, p-2$). 

By Lemma \ref{crux.lemma} and Lemma \ref{stolz.type.lemma}, there are no subordinate solutions for $2E = \eta \in \R \setminus S$, and by the subordinacy theory of Gilbert-Pearson and Behncke, this implies the theorem. 
\end{proof}
\end{section}

\bibliographystyle{amsplain}

\providecommand{\MR}[1]{}
\providecommand{\bysame}{\leavevmode\hbox to3em{\hrulefill}\thinspace}
\providecommand{\MR}{\relax\ifhmode\unskip\space\fi MR }
% \MRhref is called by the amsart/book/proc definition of \MR.
\providecommand{\MRhref}[2]{%
	\href{http://www.ams.org/mathscinet-getitem?mr=#1}{#2}
}
\providecommand{\href}[2]{#2}

\end{document}